\newtheorem{theorem}{Theorem}[section]
\newtheorem{cor}[theorem]{Corollary}
\newtheorem{lem}[theorem]{Lemma}
\newtheorem{prop}[theorem]{Proposition}
\theoremstyle{definition}
\newtheorem{rem}[theorem]{Remark}
\newtheorem{example}[theorem]{Example}
\numberwithin{equation}{section}
\newcommand{\N}{\mathbb{N}}
\newcommand{\R}{\mathbb{R}}
\newcommand{\Z}{\mathbb{Z}}
\newcommand{\Q}{\mathbb{Q}}
\newcommand{\cF}{\mathcal{F}}
\newcommand{\cP}{\mathcal{P}}
\newcommand{\cO}{\mathcal{O}}
\begin{document}

\title{The asymptotics of purely atomic\\one-dimensional L\'evy approximations}

 \author{Arno Berger\\Mathematical and Statistical
   Sciences\\University of Alberta\\Edmonton, Alberta, {\sc
     Canada}\\[2mm] and\\[2mm] Chuang Xu\\Department of Mathematical
   Sciences\\University of Copenhagen\\Copenhagen, {\sc Denmark}}

\maketitle

\begin{abstract}
\noindent
For arbitrary Borel probability measures on the real line, necessary
and sufficient conditions are presented that characterize best
purely atomic approximations relative to the classical L\'evy
probability metric, given any number of atoms, and allowing for
additional constraints regarding locations or weights of atoms. The
precise asymptotics (as the number of atoms goes to infinity) of the
approximation error is identified for the important special cases of
best uniform (i.e., all atoms having equal weight) and best
(unconstrained) approximations, respectively. When compared to similar
results known for other probability metrics, the
results for L\'evy approximations are more complete and
require fewer assumptions.
\end{abstract}

\noindent
\hspace*{8.3mm}{\small {\bf Keywords.} Best (uniform) approximation,
  L\'{e}vy probability metric, inverse function,\\
\hspace*{26.2mm} inverse measure, approximation error, asymptotic point
distribution.}

\noindent
\hspace*{8.3mm}{\small {\bf MSC2010.} 60B10, 60E15, 62E15.}

\section{Introduction}\label{sec1}

Let $\cP$ be the set of all Borel probability measures on the real
line, and denote the support of $\mu \in \cP$ by $\mbox{\rm supp}\,
\mu$. For each positive integer $n$, let $\cP_n^{\ast} = \{\mu \in \cP :
\# \, \mbox{\rm supp}\, \mu \le n \}$. Recall that $\cP$ endowed with the
topology of weak convergence is a Polish space that contains
$\cP_{\infty}^{\ast} := \bigcup_{n} \cP_n^{\ast} = \{ \mu \in \cP : \# \,
\mbox{\rm supp}\, \mu  < \infty \}$ as a dense subspace
\cite[Ch.11]{D}. Many different metrics (and metric-like quantities
\cite{GS}) on $\cP$ or parts thereof have been studied extensively,
as they play important roles in statistics and probability theory
\cite{R, RKSF}. Given a specific probability metric $d$ and $\mu \in \cP \setminus
\cP_{\infty}^{\ast}$, it is natural to ask whether there exists, for every $n$, a best $d$-approximation
$\delta_{\bullet}^{\bullet, n}$ of $\mu$ in $\cP_n^{\ast}$, i.e.,
$d(\mu , \delta_{\bullet}^{\bullet, n}) = \inf \{ d(\mu, \nu) : \nu
\in \cP_n^{\ast}\}$, perhaps with additional desirable properties such
as, e.g., all atoms having equal weight; see Section \ref{sec2} for
precise terminology and notation. Provided they exist, how can such
best $d$-approximations be characterized and found systematically? How
fast do they converge to $\mu$, i.e., at what rate does the
approximation error $d(\mu,\delta_{\bullet}^{\bullet, n})$ tend to $0$ as $n\to
\infty$? Questions like these, regarding the approximation in $\cP$ by
elements of $\cP_{\infty}^{\ast}$, continue to attract interest in a
wide variety of contexts; see, e.g., \cite{BA, BL, DSS, DV, GrayN, Gruber, PP}
and the many references therein.

Denoting the distribution function of $\mu \in \cP$ by $F_{\mu}$, i.e.,
$F_{\mu} (x) = \mu \bigl( ]-\! \infty, x]\bigr)$ for all $x\in \R$, recall the {\em
  Kantorovich\/} (or {\em Wasserstein}; cf.\ \cite[p.4]{BL} and \cite{GS}) {\em metric},
given by
\begin{equation}\label{eq101}
d_{\sf W} (\mu, \nu) = \int_{\R} |F_{\mu} (x) - F_{\nu} (x)| \, {\rm
  d}x = \int_{[0,1]} |F_{\mu}^{-1} (y) - F_{\nu}^{-1} (y)|\, {\rm d}y
\, ,
\end{equation}
where $F_{\mu}^{-1}$ is an inverse of $F_{\mu}$; see Section
\ref{sec2} for details. Note that strictly speaking $d_{\sf W}$ is not
defined on all of $\cP \times \cP$, but only on $\cP_1 \times \cP_1$,
with $\cP_1 = \bigl\{\mu \in \cP : \int_{\R} |x|\, {\rm d}\mu(x) < +
\infty \bigr\}\supset \cP_{\infty}^*$. The metric space $(\cP_1, d_{\sf W})$ is
complete and separable, though its metric topology is finer than the
subspace topology inherited from $\cP$. Due to its simplicity and
functional-analytic flavour, the metric $d_{\sf W}$ figures
prominently in many applied areas, e.g., image compression, signal
processing, mathematical finance, and optimal transport \cite{GrayN,
  PPP, RR, Vill, Z}. A vast literature exists addressing the
basic questions mentioned earlier relative to $d_{\sf W}$,
as well as many generalizations thereof, notably to
multi-dimensional settings \cite{BL, FP, GLP, GLP12, PP}.

Another important notion of distance, the {\em Prokhorov metric\/} is
given by
\begin{equation}\label{eq102}
d_{\sf P} (\mu, \nu) = \inf \bigl\{
y \in \R^+ : \mu(B) \le \nu (B^y) + y \enspace  \mbox{\rm for all Borel sets } B \subset \R
\bigr\} \quad \forall \mu, \nu \in \cP \, ,
\end{equation}
where $B^y = \{ x\in \R : \mbox{\rm dist} (x,B) < y\}$. Note that
$d_{\sf P}$ is defined on all of $\cP \times \cP$, unlike $d_{\sf W}$,
and metrizes precisely the topology of weak convergence
\cite{D, GS}. Also, $d_{\sf P} (\mu, \nu) \le 1$ for all $\mu, \nu \in
\cP$. A general theory of best $d_{\sf P}$-approximation in $\cP$ by elements
of $\cP_{\infty}^{\ast}$ has been initiated in \cite{GL1}, where the
authors observe that some aspects of the theory are ``more difficult [than the corresponding theory for
$d_{\sf W}$] \dots mainly due to the lack of suitable scaling
properties [of $d_{\sf P}$]''.

In a spirit similar to \cite{GL, XB}, the present article addresses
the approximation problem relative to the classical {\em L\'evy metric},
\begin{equation}\label{eq103}
d_1(\mu, \nu) = \inf \bigl\{
y\in \R^+ : F_{\mu-} (\, \cdot \, - y) - y \le F_{\nu} \le F_{\mu} (\,
\cdot \,  + y) + y
\bigr\} \quad \forall \mu, \nu \in \cP \, ,
\end{equation}
where $F_{\mu - } (x) = \lim_{\varepsilon \downarrow 0} F_{\mu} (x -
\varepsilon) = \mu \bigl( ]-\! \infty, x[\bigr)$. Note that $d_1 \le
1$, similarly to $d_{\sf P}$. The values of $d_{\sf W}$, $d_{\sf
P}$, and $d_1$ are not completely unrelated, since
$d_1 (\mu, \nu) \le d_{\sf P} (\mu, \nu) \le \sqrt{ d_{\sf W} (\mu,
  \nu) } $ for all $\mu, \nu \in \cP_1$; see, e.g., \cite{BX, D, GS}. When compared to $d_{\sf W}$ and $d_{\sf
  P}$, the metric $d_1$ is particularly attractive: On the one hand,
it is a {\em bona fide\/} metric \cite[p.100]{BHM} metrizing the
topology of weak convergence on all of $\cP$ (similar to $d_{\sf P}$,
but unlike $d_{\sf W}$). On the other hand, its definition
(\ref{eq103}) is considerably easier to work with than (\ref{eq102}). Although
computing $d_1$ for concrete problems may still be ``not
easy'' \cite[p.423]{GS} (cf.\ also \cite{T75}), especially when compared to (\ref{eq101}), the main (asymptotic)
results of this article suggest that nevertheless $d_1$ often is more benign
than both $d_{\sf W}$ and $d_{\sf P}$, in that fewer assumptions (or
no assumptions at all, as in Theorem \ref{thm450} and Proposition
\ref{prop454} below) are needed to draw
analogous or perhaps even stronger conclusions. With all technical
details deferred to later sections, this is illustrated here for two
familiar (absolutely continuous) distributions --- standard normal and $1$-Pareto.

Let $\mu$ be the standard normal distribution. By a celebrated
asymptotic result for best $d_{\sf W}$-approximations
\cite[Thm.6.2]{GL},
\begin{equation}\label{eq104}
\lim\nolimits_{n\to \infty} n d_{\sf W} (\mu,
\delta_{\bullet}^{\bullet, n}) = \sqrt{ \frac{\pi}{2} } = 1.253 \, ,
\end{equation}
whereas by \cite[Ex.5.2]{GL1},
\begin{equation}\label{eq105}
\lim\nolimits_{n\to \infty} \frac{n}{\sqrt{\log n}} \, d_{\sf P} (\mu,
\delta_{\bullet}^{\bullet, n}) = \sqrt{ 2 } \, .
\end{equation}
Note that (\ref{eq104}) yields the faster decay of the approximation (or {\em
  quantization\/}) error $d(\mu, \delta_{\bullet}^{\bullet, n})$,
whereas only (\ref{eq105}) involves a probability metric that actually
metrizes the topology of weak convergence. As it turns out, for the L\'evy metric
these two desirable properties can be achieved simultaneously: Theorem
\ref{thm450} below, one of the main results of this article, yields
\begin{equation}\label{eq106}
\lim\nolimits_{n\to \infty} n d_{1} (\mu,
\delta_{\bullet}^{\bullet, n}) =   -\sqrt{\frac{\pi}{2}} \, {\rm
  Li}_{1/2} \left( - \frac1{\sqrt{2\pi}}\right) = 0.3931  \, ,
\end{equation}
where ${\rm Li}_{1/2}$ denotes the polylogarithm of order
$\frac12$. An interesting variant of (\ref{eq104})--(\ref{eq106}) considers best
{\em uniform\/} approximations of $\mu\in \cP$, that is, best approximations
of $\mu$ by $\nu \in \cP_n^{\ast}$, subject to the additional
requirement that $n\nu (\{x\})$ is a (positive) integer for every
$x\in \, \mbox{\rm supp}\, \nu$. Best uniform (or, more generally,
best constrained) approximations have recently attracted considerable
interest, not least in view of potential applications in stochastic processes and
differential equations \cite{C, C18, GHMR, G18,  XThesis, XB}; they may also be viewed as deterministic analogues of
(random) empirical measures \cite{BL, DSS, FG}. With $\delta_{\bullet}^{u_n}$
denoting a best uniform $d$-approximation of $\mu$, trivially $d(\mu, \delta_{\bullet}^{\bullet, n}) \le d(\mu,
\delta_{\bullet}^{u_n})$. For $\mu$ being the standard normal
distribution, \cite[Ex.5.18]{XB} reports that
$$
d_{\sf W} (\mu, \delta_{\bullet}^{u_n}) = \cO \left(
\frac{\sqrt{\log n}}{n}
\right) \quad \mbox{\rm as } n \to \infty \, ,
$$
and this bound is sharp; cf.\ also \cite{C, GHMR}. Although the authors do
not know of any analogous result regarding best uniform $d_{\sf
  P}$-approximations, (\ref{eq105}) makes it clear that $d_{\sf P}
(\mu, \delta_{\bullet}^{u_n})$ is at least $\cO(n^{-1} \sqrt{\log n})$
as $n\to \infty$, if not larger. By contrast, Theorem \ref{thm304} below,
another main result of this article, simply yields
$$
\lim\nolimits_{n\to \infty} n d_1(\mu, \delta_{\bullet}^{u_n}) =
\frac12 \, ,
$$
which represents a faster and more precise rate than its $d_{\sf W}$-
and $d_{\sf P}$-counterparts.

For a second illustrative example, let $\mu$ be the $1$-Pareto
distribution, i.e., $F_{\mu} (x) = 1 - x^{-1}$ for all $x\ge
1$. Since $\mu \not \in \cP_1$, clearly $\mu$ is not amenable to
$d_{\sf W}$-approximation, whereas \cite[Thm.5.2]{GL1} yields
$$
\lim\nolimits_{n\to \infty} \sqrt{n} d_{\sf P} (\mu,
\delta_{\bullet}^{\bullet ,n}) = \sqrt{2} \, .
$$
For the L\'evy metric, this article again provides
faster, more precise rates, namely
\begin{equation}\label{eq1om1}
n  d_1 ( \mu ,  \delta_{\bullet}^{u_n}) = \frac12  -
\frac18 \, n^{-2} + \cO (n^{-3}) \quad \mbox{\rm as } n \to \infty \, ,
\end{equation}
as well as 
\begin{equation}\label{eq1om2}
n d_1 ( \mu , \delta_{\bullet}^{\bullet , n}) = \frac{\pi}{8}  +
\frac{\pi^2 (6-\pi)}{3\cdot 2^{10}} \, n^{-2} + \cO (n^{-3}) \quad \mbox{\rm as } n \to \infty \, .
\end{equation}
Thus the results of this article make the case that although the L\'evy
metric $d_1$, unlike $d_{\sf  W}$ and $d_{\sf P}$, does not extend to higher dimensions in a
straightforward way, its usage for one-dimensional probabilities often
leads to simpler and stronger results.

This article is organized as follows. Section
\ref{sec2} first introduces all required terminology and notation,
and then reviews basic facts pertaining to approximations in $\cP$ relative to the
L\'evy metric. Utilizing the latter, Sections \ref{sec3} and
\ref{sec4} specifically study best uniform and best (unconstrained)
approximations, respectively, and in particular the asymptotics of the
approximation error as $n\to \infty$. Also, under a mild assumption
the atoms of (asymptotically) best
approximations conform to an asymptotic point distribution, as shown
by Theorem \ref{thm470} below.

\section{L\'evy probability metrics}\label{sec2}

This section reviews basic facts regarding the approximation in $\cP$
by measures with finite support, relative to the L\'{e}vy probability
metric(s). The stated results are straightforward extensions of
\cite{BX, XThesis}, and the reader is referred to these references for
further details and elementary proofs. 
The following, mostly standard notations are used throughout. The sets
of all positive integers, non-negative integers, integers, positive
real numbers, and real
numbers are denoted $\N$, $\N_0$, $\Z$, $\R^+$, and $\R$,
respectively. Numerical values of real numbers are displayed to four
correct significant decimal digits. For every $x\in \R$ and non-empty $A\subset \R$,
$\mbox{\rm dist} (x,A) = \inf_{a\in A}|x-a|$, $\mbox{\rm diam}\, A =
\sup_{a,b\in A}|a-b|$, and ${\bf 1}_A$ is the indicator function of
$A$. The cardinality of $A$ is $\# A$. If the domain of a function $f$ contains $A$ then $f(A) = \{f(a)
: a\in A\}$. Lebesgue measure on the real line is denoted $\lambda$.

Since non-decreasing functions play a crucial role in what follows, first a few basic properties of such functions are
recorded. Throughout, denote by $\overline {\R} = \R \cup \{-\infty,
+\infty\}$ the extended real line with its usual order and
topology, and by $\cF$ the family of all functions $f:\R \to
\overline{\R}$ that are non-decreasing and right-continuous. Given
$f\in \cF$, let $f(\pm \infty) = \lim_{x\to \pm \infty} f(x)\in
\overline{\R}$, and for every $x\in \overline{\R}$ let $f_-(x) =
\lim_{\varepsilon \downarrow 0} f(x- \varepsilon)$. Note that $f_-(x) \le f(x)\le f_- (y)$ whenever $x<y$; in particular, $f_-(x)=f(x)$
if and only if $f$ is continuous at $x$. With every $f\in \cF$
associate its (upper) {\em inverse function\/} $f^{-1}:\R \to
\overline{\R}$ given by
$$
f^{-1} (x) = \sup \bigl\{ y\in \R : f(y) \le x \bigr\} \quad \forall x \in \R \, ;
$$
here and throughout the convention $\sup \varnothing = - \infty$ (and $\inf
\varnothing = +\infty$) is adhered to. Importantly, $\cF$ is closed
under inversion and composition.

\begin{prop}\label{prop201}
Let $f, g\in \cF$. Then $f^{-1}\circ g\in \cF$, and $(f^{-1})^{-1} = f$.
\end{prop}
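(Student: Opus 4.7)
The plan is to reduce the proposition to two independent facts: (a) for any $f\in\cF$ the inverse $f^{-1}$ itself lies in $\cF$, and (b) $\cF$ is closed under composition. Together these give $f^{-1}\circ g\in\cF$. The identity $(f^{-1})^{-1}=f$ is then a direct computation that exploits the right-continuity of $f$ to identify the level set defining $(f^{-1})^{-1}$.

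For (a), monotonicity of $f^{-1}$ is immediate from the sup-definition, since $x_1\le x_2$ entails $\{y:f(y)\le x_1\}\subset\{y:f(y)\le x_2\}$. The crux is right-continuity, which I would handle by contradiction: assuming $L:=\lim_{\varepsilon\downarrow 0}f^{-1}(x+\varepsilon)>f^{-1}(x)$, pick any $y_0\in\,]f^{-1}(x),L[$. Then $y_0<f^{-1}(x+\varepsilon)$ for every $\varepsilon>0$, so the sup-definition supplies some $y'>y_0$ with $f(y')\le x+\varepsilon$; monotonicity of $f$ gives $f(y_0)\le x+\varepsilon$, and letting $\varepsilon\downarrow 0$ forces $f(y_0)\le x$, i.e., $y_0\le f^{-1}(x)$, a contradiction.

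For (b), if $f,g\in\cF$, then $f\circ g$ is plainly non-decreasing; and for any $x\in\R$, monotonicity plus right-continuity of $g$ give $g(x+\varepsilon)\downarrow g(x)$ as $\varepsilon\downarrow 0$, whereupon right-continuity of $f$ (applied along this monotone sequence) yields $f(g(x+\varepsilon))\to f(g(x))$. Combining (a) and (b) delivers $f^{-1}\circ g\in\cF$. For the identity, I unfold
$(f^{-1})^{-1}(x)=\sup\bigl\{y\in\R:f^{-1}(y)\le x\bigr\}$ and observe that $f^{-1}(y)\le x$ is equivalent to ``$f(t)>y$ for every $t>x$'', i.e., $y<\inf_{t>x}f(t)$. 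By monotonicity and right-continuity of $f$ this infimum equals $f(x)$, so the set in question is either $]-\infty,f(x)[$ or $]-\infty,f(x)]$, and in either case its supremum is $f(x)$.

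The main thing to watch — rather than a genuine obstacle — is the bookkeeping of strict versus non-strict inequalities when translating between $f$ and $f^{-1}$ (the sup in the definition of $f^{-1}$ need not be attained), and the extended-real-line edge cases $f(\pm\infty)=\pm\infty$, which are handled by the conventions $\sup\varnothing=-\infty$, $\inf\varnothing=+\infty$ adopted in the excerpt. No deeper ideas are needed beyond a careful and systematic use of the sup-definition together with right-continuity.
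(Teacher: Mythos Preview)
Your argument is correct. The paper itself does not supply a proof of this proposition: it groups Propositions \ref{prop201}--\ref{prop204} together as ``straightforward extensions'' of \cite{BX, XThesis} and refers the reader to those sources for the elementary details, so there is no in-paper proof to compare against. Your decomposition into (a) $f^{-1}\in\cF$ and (b) closure of $\cF$ under composition, followed by the direct computation of $(f^{-1})^{-1}$, is exactly the natural route and matches what one finds in the cited references; the only places requiring care, as you note, are the $\overline{\R}$-valued edge cases (handled by the paper's convention $f(\pm\infty)=\lim_{x\to\pm\infty}f(x)$ and $\sup\varnothing=-\infty$), and these do not alter the argument.
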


\noindent
Given $f,g\in \cF$ and $\epsilon  >0 $, let 
$$
d_{\epsilon } (f,g) = \inf  \bigl\{y \in \R^+ : f_-( \, \cdot \, - y/
\epsilon  ) - y \le g \le
f(\, \cdot \, + y/ \epsilon ) + y\bigr\} \, \in \, [0, +\infty] \, .
$$
Motivated for $\epsilon  =1$ by (\ref{eq103}), this definition enables a unified treatment
of all $\epsilon $-L\'evy probability metrics later in this section. It is
readily checked that $d_{\epsilon }$ indeed satisfies the axioms of a metric on $\cF$,
except that $d_{\epsilon } (f,g)$ may equal $+\infty$. Also, $d_{\epsilon }$ is compatible with inversion.

\begin{prop}\label{prop202}
Let $f,g\in \cF$ and $\epsilon >0$. Then $d_{\epsilon}( f^{-1}, g^{-1})
= \epsilon  d_{1/\epsilon } (f,g)$.
\end{prop}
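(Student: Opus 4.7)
The plan is to show, for every $z>0$, the equivalence of the two condition systems
\begin{equation*}
(B)\colon\ f_-(\cdot-a)-b\le g\le f(\cdot+a)+b \quad\text{and}\quad (A)\colon\ (f^{-1})_-(\cdot-b)-a\le g^{-1}\le f^{-1}(\cdot+b)+a,
\end{equation*}
where $a=z$ and $b=z/\epsilon$. Unpacking the definitions, $(B)$ holds precisely when $\epsilon d_{1/\epsilon}(f,g)\le z$ and $(A)$ holds precisely when $d_\epsilon(f^{-1},g^{-1})\le z$; this equivalence, valid for every $z>0$, then yields the claimed identity upon taking infima.

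The main technical ingredient will be a pair of Galois-type identities for $f\in\cF$ and $x,w,z\in\R$:
\begin{equation*}
(\mathrm{i})\ \ f_-(x)\le w\ \Leftrightarrow\ x\le f^{-1}(w), \qquad (\mathrm{ii})\ \ (f^{-1})_-(w)\le z\ \Leftrightarrow\ f(z)\ge w.
\end{equation*}
Identity (i) is immediate from $f_-(x)=\lim_{\varepsilon\downarrow 0}f(x-\varepsilon)$, the definition of $f^{-1}$ as a supremum, and the monotonicity of $f$; identity (ii) is then (i) applied to $h:=f^{-1}\in\cF$ combined with $(f^{-1})^{-1}=f$ (Proposition~\ref{prop201}). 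I would then match each of the two sides of $(B)$ with the corresponding side of $(A)$. The ``left'' equivalence ($f_-(\cdot-a)-b\le g\Leftrightarrow g^{-1}\le f^{-1}(\cdot+b)+a$) follows by applying (i) to the reformulation $f_-(x')\le g(x'+a)+b$ (after the substitution $x'=x-a$), the monotonicity of $f^{-1}$, and the trivial fact $g^{-1}(g(x))\ge x$. For the ``right'' equivalence, the forward direction $g\le f(\cdot+a)+b\Rightarrow(f^{-1})_-(\cdot-b)-a\le g^{-1}$ uses that $g(g^{-1}(y))\ge y$ whenever $g^{-1}(y)$ is finite (a consequence of the right-continuity of $g$) together with (ii).

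The main obstacle will be the reverse direction of the right-inequality equivalence, i.e., $(A)$-left implies $(B)$-right. The naive approach of applying $(f^{-1})_-(y-b)-a\le g^{-1}(y)$ at $y=g(x_0)$ only yields $f(g^{-1}(g(x_0))+a)\ge g(x_0)-b$, and since $g^{-1}(g(x_0))$ may strictly exceed $x_0$, this carries no information about $f(x_0+a)$. Instead I plan to argue by contradiction: if $g(x_0)>f(x_0+a)+b$ for some $x_0$, pick a finite $y\in\bigl(f(x_0+a)+b,\ g(x_0)\bigr)$. Monotonicity of $g$ forces $g^{-1}(y)\le x_0$ (since $g(z)\ge g(x_0)>y$ for all $z\ge x_0$), so $(A)$-left at $y$ reads $(f^{-1})_-(y-b)\le g^{-1}(y)+a\le x_0+a$, which by (ii) becomes $f(x_0+a)\ge y-b>f(x_0+a)$, a contradiction. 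Combining the four sub-equivalences then completes the proof.
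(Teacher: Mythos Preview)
Your argument is correct. The two Galois identities (i) and (ii) are the natural order-theoretic facts underlying the upper inverse, and once they are in place the four sub-equivalences go through exactly as you outline; in particular, your contradiction argument for the reverse direction of the right inequality is the clean way around the $g^{-1}(g(x_0))\ge x_0$ obstruction. One cosmetic remark: the claim ``$(B)$ holds precisely when $\epsilon d_{1/\epsilon}(f,g)\le z$'' presupposes that the infimum defining $d_{1/\epsilon}$ is attained; this is true (use right-continuity of $f$ and left-continuity of $f_-$), but you do not actually need it, since your final step---showing $\{z>0:(A)\}=\{z>0:(B)\}$ and then taking infima---bypasses the issue entirely.

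As for comparison: the paper does not supply a proof of this proposition, deferring instead to \cite{BX, XThesis} for ``elementary proofs''. Your direct verification via the Galois correspondence between $f$ and $f^{-1}$ is precisely the kind of elementary argument one would expect those references to contain, so there is nothing substantive to contrast.
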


\noindent
Given $f,g\in \cF$, note that $\epsilon  \mapsto d_{\epsilon } (f,g)$
is non-decreasing and continuous on
$\R^+$. Consequently, the limits of $d_{\epsilon }
(f,g)$ exist as $\epsilon  \to 0$ or $\epsilon  \to +\infty$. For instance, if
$f,g\in \cF$ are bounded then simply
$$
\lim\nolimits_{\epsilon  \to 0} d_{\epsilon } (f,g) = \limsup\nolimits_{|x|\to
  +\infty} |f(x) - g(x)| = \max \bigl\{|f(-\infty) - g(-\infty)|,
|f(+\infty) - g(+\infty)| \bigr\} \, ,
$$
but also
$$
\lim\nolimits_{\epsilon  \to +\infty} d_{\epsilon } (f,g) = \sup\nolimits_{x\in
  \R} |f(x) - g(x)| = \|f-g\|_{\infty} \, ;
$$
here, as usual, $\|h\|_{\infty}  = \mbox{\rm ess sup}\, |h| = \inf \bigl \{y\in \R^+ : \lambda
(\{ |h|\ge y\}) =
0\bigr\}$ for every measurable function $h:\R \to
\overline{\R}$.

Given $f\in \cF$, let $I \subset \overline{\R}$ be any interval with
the property that
\begin{equation}\label{eq203}
f_- (\sup I - x), - f(\inf I + x) < + \infty \quad \mbox{\rm for some
} x \in \R \, ,
\end{equation}
and consider the auxiliary function $\ell_{f,I}: \R \to
\R$, introduced in \cite{BX}, with
$$
\ell_{f,I} (x) = \inf \bigl\{y\in \R^+ : f_- (\sup I - y) - y \le x \le
f(\inf I + y) + y \bigr\} \quad \forall x \in \R \, ;
$$
also, let $\ell_{f,I}^* = \inf \bigl\{y\in \R^+ : f_- (\sup I - y) - y \le
f(\inf I + y) + y\bigr\}$.
For any sequence $(I_k)_{k\in \N}$ of intervals in $\overline{\R}$, write $\lim_{k\to \infty} I_k = I$ if $\lim_{k\to \infty} \inf I_k =
\inf I$ and $\lim_{k\to \infty} \sup I_k  = \sup I$.

\begin{prop}\label{prop204}
Let $f\in \cF$, and let $I\subset \overline{\R}$ be an interval satisfying
{\rm (\ref{eq203})}.
\begin{enumerate}
\item The function $\ell_{f,I}$ is Lipschitz continuous and
  non-negative;
\item $\ell_{f,I} (x)\ge \ell_{f,I}^* \ge 0$ for all $x\in \R$;
\item $\ell_{f,I}^* \le \frac12 \lambda(I)$, and $\ell_{f,I}^* = 0$ if and only if $f_-(\sup I) \le f(\inf I)$;
\item If $(I_k)_{k\in \N}$ is a sequence of intervals in
  $\overline{\R}$ with $\lim_{k\to \infty} I_k = I$, then $I_k$ satisfies {\rm
    (\ref{eq203})} for all sufficiently large $k$, and
$$
\lim\nolimits_{k\to \infty} \ell_{f,I_k}^*
= \ell_{f,I}^*  \quad \mbox{as well as} \quad
\lim\nolimits_{k\to \infty} \ell_{f, I_k} (x) = \ell_{f,I} (x) \quad
\forall x \in \R\, .
$$
\end{enumerate}
\end{prop}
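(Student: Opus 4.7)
The plan is to work throughout with the two auxiliary functions $g(y) = f_-(\sup I - y) - y$ and $h(y) = f(\inf I + y) + y$ for $y \in \R^+$, in terms of which $\ell_{f,I}(x) = \inf\{y > 0 : g(y) \le x \le h(y)\}$ and $\ell_{f,I}^* = \inf\{y > 0 : g(y) \le h(y)\}$. The decisive monotonicity properties are: $g$ is non-increasing with $g(y+s) \le g(y) - s$, while $h$ is non-decreasing with $h(y+s) \ge h(y) + s$, for all $s \ge 0$; condition (\ref{eq203}) forces $g(y) \to -\infty$ and $h(y) \to +\infty$ as $y \to +\infty$, so the above infima are finite.

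Parts (i)--(iii) then fall out quickly. Non-negativity in (i) and (ii) is immediate from $y \in \R^+$. For Lipschitz continuity of $\ell_{f,I}$, fix $x < x'$ and any $y_0$ feasible for $\ell_{f,I}(x)$; then $y_0 + (x'-x)$ is feasible for $\ell_{f,I}(x')$, since $g(y_0 + (x'-x)) \le g(y_0) \le x \le x'$ and $h(y_0 + (x'-x)) \ge h(y_0) + (x'-x) \ge x'$. The symmetric step in the other direction yields $|\ell_{f,I}(x) - \ell_{f,I}(x')| \le |x-x'|$. Part (ii) is immediate, since any $y$ feasible for $\ell_{f,I}(x)$ a fortiori satisfies $g(y) \le h(y)$. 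In (iii), the choice $y = \lambda(I)/2$ makes $\sup I - y = \inf I + y$ the midpoint of $I$, so $f_-(\sup I - y) \le f(\inf I + y)$ and hence $g(y) \le h(y)$, proving the bound. For the characterization, if $f_-(\sup I) \le f(\inf I)$, monotonicity gives $g(y) \le f_-(\sup I) \le f(\inf I) \le h(y)$ for every $y > 0$, so $\ell_{f,I}^* = 0$; conversely, any sequence $y_n \downarrow 0$ with $g(y_n) \le h(y_n)$ passes to the limit via left-continuity of $f_-$ at $\sup I$ (so $f_-(\sup I - y_n) \uparrow f_-(\sup I)$) and right-continuity of $f$ at $\inf I$ (so $f(\inf I + y_n) \downarrow f(\inf I)$), delivering $f_-(\sup I) \le f(\inf I)$.

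Part (iv) is the technical core. Writing $g_k, h_k$ for the analogues built from $I_k$, the transfer of (\ref{eq203}) from $I$ to $I_k$ for large $k$ is a monotonicity argument after slightly enlarging the witness in (\ref{eq203}). The crucial estimate is the two-sided semicontinuity
$$
\liminf\nolimits_k g_k(y) \ge g(y), \qquad \limsup\nolimits_k h_k(y) \le h(y) \qquad \forall y > 0,
$$
both of which follow from the elementary facts that $\liminf_j f_-(z_j) \ge f_-(z_0)$ and $\limsup_j f(z_j) \le f(z_0)$ whenever $z_j \to z_0$ (consequences of monotonicity combined with left-continuity of $f_-$ and right-continuity of $f$). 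With this in hand, $\liminf_k \ell^*_{f,I_k} \ge \ell^*_{f,I}$ follows by contradiction: if a subsequence converges to some $L < \ell^*_{f,I}$, pick $y \in (L, \ell^*_{f,I})$; feasibility $g_{k_j}(y) \le h_{k_j}(y)$ eventually yields $g(y) \le h(y)$ in the limit, contradicting $y < \ell^*_{f,I}$.

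The main obstacle is the reverse inequality $\limsup_k \ell^*_{f,I_k} \le \ell^*_{f,I}$, because the crude semicontinuity above can fail to give $\limsup g_k(y) \le g(y)$ on the ``wrong side'' of a jump of $f$. The fix is to choose $y^* \in (\ell^*_{f,I}, \ell^*_{f,I} + \varepsilon)$ so that $f$ is continuous at both $\sup I - y^*$ and $\inf I + y^*$; this is possible because $f$ has at most countably many discontinuities. At such $y^*$ the liminf/limsup become genuine limits, and the strict gap $h(y^*) - g(y^*) \ge \varepsilon$ (from the at-least-unit growth of $h$ past $\ell^*_{f,I}$, combined with $g(y^*) \le g(\ell^*_{f,I}) \le h(\ell^*_{f,I})$) forces $g_k(y^*) \le h_k(y^*)$ for all large $k$, i.e., $\ell^*_{f,I_k} \le y^*$. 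The same bookkeeping, with the two-sided sandwich $g_k(y) \le x \le h_k(y)$ in place of $g_k(y) \le h_k(y)$, delivers the pointwise limit $\ell_{f,I_k}(x) \to \ell_{f,I}(x)$.
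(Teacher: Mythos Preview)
The paper does not supply its own proof of this proposition; it explicitly defers to \cite{BX, XThesis} for ``further details and elementary proofs.'' So there is nothing to compare against, and your argument stands or falls on its own.

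Your proof is correct in structure and in all essential points. The auxiliary functions $g,h$, their strict monotonicity ($g(y+s)\le g(y)-s$, $h(y+s)\ge h(y)+s$), the upward-closedness of the feasible sets, and the one-sided semicontinuity estimates for $g_k,h_k$ via left-continuity of $f_-$ and right-continuity of $f$ are exactly the right ingredients, and parts (i)--(iii) are handled cleanly. One quantitative slip: with $y^*\in(\ell_{f,I}^*,\ell_{f,I}^*+\varepsilon)$ your monotonicity bounds give only $h(y^*)-g(y^*)\ge 2(y^*-\ell_{f,I}^*)>0$, not $\ge\varepsilon$; but strict positivity is all that is needed, since at a continuity point of $f$ you have genuine convergence $g_k(y^*)\to g(y^*)$, $h_k(y^*)\to h(y^*)$, and hence $g_k(y^*)<h_k(y^*)$ eventually. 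Two small omissions worth a sentence each: the use of $g(\ell_{f,I}^*)\le h(\ell_{f,I}^*)$ tacitly assumes $\ell_{f,I}^*>0$ (so that the infimum is attained; when $\ell_{f,I}^*=0$ replace it by any feasible $y'\in(0,y^*)$), and the cases $\sup I=+\infty$ or $\inf I=-\infty$ deserve a remark (there $g_k(y)$ or $h_k(y)$ converges monotonically to the constant $f(+\infty)-y$ or $f(-\infty)+y$, and the argument simplifies).
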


\begin{rem}\label{rem205}
(i) If $f = F_{\mu}$ (respectively, $f = F_{\mu}^{-1}$) for some $\mu \in
\cP$ then every (respectively, every bounded) interval $I\subset \overline{\R}$
satisfies (\ref{eq203}). Given $f\in \cF$, note that $f = F_{\mu}$ for
some (necessarily unique) $\mu \in \cP$ if and only if $f(-\infty) =
0$ and $f(+\infty) = 1$; similarly, $f = F_{\mu}^{-1}$ for some $\mu
\in \cP$ if and only if $f_-(0)=-\infty$, $f(1)=+\infty$, and
$f\bigl( ]0,1[ \bigr)\subset \R$.

(ii) The function $\ell_{f,I}$ may not attain a minimum value, or when it
does, that minimum value may be larger than $\ell_{f,I}^*$. However, mild
additional assumptions guarantee that $\ell_{f,I} (x) =
\ell_{f,I}^*$ for some $x\in \R$; see \cite[Prop.3.3]{BX}.
\end{rem}

For every $\epsilon  >0$, consider the $\epsilon  $-{\em L\'evy metric\/} on $\cP$ given by
$$
d_{\epsilon } (\mu, \nu) = d_{\epsilon } (F_{\mu}, F_{\nu}) \quad \forall \mu, \nu \in \cP
\, .
$$
The metric $d_{\epsilon }$ is complete, separable,
and induces the topology of weak convergence. (For an authoritative account on the family $(d_{\epsilon })_{\epsilon
  >0 }$ the
reader may want to consult \cite[Sec.4.2]{RKSF}; see also \cite{T75}.) Note that $\epsilon
\mapsto d_{\epsilon} (\mu, \nu)$ is non-decreasing with $\lim_{\epsilon 
\to 0} d_{\epsilon } (\mu, \nu) = 0$, whereas
$$
\lim\nolimits_{\epsilon  \to +\infty} d_{\epsilon } (\mu, \nu) = \| F_{\mu} - F_{\nu}\|_{\infty}
\quad \forall \mu , \nu \in \cP \, ,
$$
often referred to as the {\em uniform\/} or {\em Kolmogorov\/} metric,
yields a complete yet non-separable metric on $\cP$ and induces
a finer topology \cite[Sec.5]{BX}. For any $\mu \in \cP$ and $\epsilon  >0$, and with the dilation $T_{\epsilon } : x\mapsto \epsilon  x$, notice
the simple but useful identity
\begin{equation}\label{eq206}
d_{\epsilon } (\mu, \nu) = d_1 ( \mu \circ T_{\epsilon }^{-1}, \nu \circ T_{\epsilon }^{-1}) \quad \forall \mu, \nu
\in \cP \, .
\end{equation}
To study finitely supported (and hence purely atomic)
$d_{\epsilon }$-approximations of any $\mu \in \cP$, this article
employs the following notations: For every $n\in \N$, let $\Xi_n=\{x\in\R^n: x_{,1}\le\ldots\le x_{,n}\}$,
$\Pi_n=\{p\in\R^n: p_{,j}\ge0,\ \sum_{j=1}^np_{,j}=1\}$, and for
each $x\in \Xi_n$ and $p\in \Pi_n$ let $\delta_x^p=\sum_{j=1}^np_{,j}\delta_{x_{,j}}$. For convenience,
$x_{,0}:=-\infty$ and $x_{,n+1}:=+\infty$ for every $x\in\Xi_n,$ as
well as $P_{,i} :=\sum_{j=1}^ip_{,j}$ for $i=0,\ldots,n$ and every $p\in\Pi_n$; note that $P_{,0}=0$ and $P_{,n}=1.$
Henceforth, usage of the symbol $\delta_x^p$ tacitly assumes that $x\in\Xi_n$ and $p\in\Pi_n,$
for some $n\in\mathbb{N}$ either specified explicitly or else clear
from the context. Utilizing (\ref{eq206}) and \cite[Lem.3.4]{BX}, the
value of $d_{\epsilon } (\mu, \delta_x^p)$ allows for simple explicit expressions.

\begin{prop}\label{prop207}
Let $\mu\in \cP$, $\epsilon  > 0 $, and $n \in \N$. For every $x\in \Xi_n$
and $p\in \Pi_n$,
\begin{equation}\label{eq208}
d_{\epsilon } (\mu, \delta_x^p) = \epsilon  \max\nolimits_{j=0}^n
\ell_{F_{\mu}/\epsilon  , [x_{,j},
  x_{,j+1}]} (P_{, j}/\epsilon )  = \max\nolimits_{j=1}^n \ell_{\epsilon  
  F_{\mu}^{-1}, [P_{,j-1}, P_{,j}]} (\epsilon   x_{,j}) \, .
\end{equation}
\end{prop}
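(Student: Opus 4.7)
The plan is to derive both equalities from the case $\epsilon = 1$, which is \cite[Lem.3.4]{BX}, by combining the dilation identity (\ref{eq206}) with the inversion compatibility of Proposition \ref{prop202}.

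First I would tackle the first equality by applying (\ref{eq206}) with $\nu = \delta_x^p$. Since dilation by $T_\epsilon$ carries $\delta_x^p$ to $\delta_{\epsilon x}^p$, and the distribution function of $\mu' := \mu \circ T_\epsilon^{-1}$ is $F_{\mu'} = F_\mu(\cdot/\epsilon)$, this yields
\[
d_\epsilon(\mu, \delta_x^p) \, = \, d_1(\mu', \delta_{\epsilon x}^p) \, = \, \max\nolimits_{j=0}^n \ell_{F_{\mu'}, [\epsilon x_{,j}, \epsilon x_{,j+1}]}(P_{,j}) \, ,
\]
where the second equality invokes \cite[Lem.3.4]{BX}. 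Unpacking the infimum that defines each $\ell$-term, the substitution $y = \epsilon z$ followed by division by $\epsilon$ throughout the bounding inequalities converts each $\ell_{F_{\mu'}, [\epsilon x_{,j}, \epsilon x_{,j+1}]}(P_{,j})$ into $\epsilon \, \ell_{F_\mu/\epsilon, [x_{,j}, x_{,j+1}]}(P_{,j}/\epsilon)$, which is precisely the first equality.

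For the second equality I would first use Proposition \ref{prop202} to rewrite $d_\epsilon(\mu, \delta_x^p) = d_\epsilon(F_\mu, F_{\delta_x^p}) = \epsilon \, d_{1/\epsilon}(F_\mu^{-1}, F_{\delta_x^p}^{-1})$. A direct calculation identifies $F_{\delta_x^p}^{-1}$ as the step function equal to $x_{,j+1}$ on $[P_{,j}, P_{,j+1})$ for $j = 0, \ldots, n-1$ (with values $\pm \infty$ outside $[0,1]$); structurally this is the \emph{dual} step function, with the roles of locations and cumulative weights interchanged. Applying the $\cF$-version of \cite[Lem.3.4]{BX} to the pair $(F_\mu^{-1}, F_{\delta_x^p}^{-1}) \in \cF \times \cF$ and then rescaling as in the first part --- this time at level $1/\epsilon$, which absorbs the leading factor $\epsilon$ into $F_\mu^{-1}$ and into the argument $x_{,j}$ --- produces $\max\nolimits_{j=1}^n \ell_{\epsilon F_\mu^{-1}, [P_{,j-1}, P_{,j}]}(\epsilon x_{,j})$, as required.

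The step I expect to be the main obstacle is the bookkeeping in the scaling argument: tracking how the factor $\epsilon$ propagates through the nested definitions of $\ell$, where $y/\epsilon$ appears inside arguments of $F_\mu$ while $y$ itself enters additively in the bounds. In addition, reconciling the apparent asymmetry between the $(n+1)$-term and $n$-term expressions takes some care --- the two boundary contributions for $j=0$ and $j=n$ in the first formula, corresponding to the unbounded intervals $[-\infty, x_{,1}]$ and $[x_{,n}, +\infty]$, have no explicit counterparts in the second. They are absorbed automatically: all intervals in the dual picture lie in $[0,1]$, and the tail information of $F_\mu$ is encoded by the extended-real values $F_\mu^{-1}(0^-)$ and $F_\mu^{-1}(1)$.
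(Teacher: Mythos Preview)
Your proposal is correct and follows precisely the route the paper indicates: the paper does not spell out a proof but states that the result follows by ``utilizing (\ref{eq206}) and \cite[Lem.3.4]{BX},'' which is exactly your reduction to $\epsilon=1$ via the dilation identity combined with the scaling computation for $\ell_{f,I}$; your additional appeal to Proposition~\ref{prop202} for the second equality is the natural dual step.
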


\noindent
For every $\mu\in \cP$, $\epsilon >0$, and $n\in \N$, (\ref{eq208})
suggests considering the following quantities: Given $x\in \Xi_n$, let 
$$
\ell^{\bullet}_x = \epsilon  \max \left\{
\ell_{F_{\mu}/\epsilon  , [-\infty, x_{,1}]}(0), \ell_{F_{\mu}/\epsilon  , [x_{,1},
  x_{,2}]}^*, \ldots,  \ell_{F_{\mu}/\epsilon  , [x_{,n-1},
  x_{,n}]}^*, \ell_{F_{\mu}/\epsilon , [x_{,n}, +\infty]}(1/\epsilon  )
\right \} \, ,
$$
and given $p\in \Pi_n$, let
$$
\ell_{\bullet}^p = \max\nolimits_{j=1}^n
\ell_{\epsilon  F_{\mu}^{-1}, [P_{,j-1}, P_{,j}]}^* \, .
$$
Notice that while $\ell^{\bullet}_x$ and $\ell_{\bullet}^p$ do depend
on $\mu, \epsilon $, and, implicitly, also $n$, in order to keep notations simple, this dependence is
not displayed explicitly. By Proposition \ref{prop204}(iv), $p\mapsto \ell_{\bullet} ^p$ is
continuous on $\Pi_n$, and hence
$$
\ell_{\bullet}^{\bullet,n } = \min \nolimits_{p\in \Pi_n} \ell_{\bullet} ^p
$$
is well-defined. (For a constructive alternative definition of
$\ell^{\bullet, n}_{\bullet}$, see \cite[Sec.3]{BX}.) The quantities $\ell^{\bullet}_x$, $\ell_{\bullet}^p$, and
$\ell_{\bullet}^{\bullet,n}$ control the minimization of $(x,p)\mapsto d_{\epsilon } (\mu,
\delta_x^p)$, with or without
constraints, in a sense made precise by Proposition \ref{prop209} below. To
formulate the result, call $\delta_x^p$ a {\em best
  $d_{\epsilon }$-approximation of $\mu\in \cP$, given $x\in \Xi_n$}  if
$$
d_{\epsilon } (\mu,\delta_x^p)\le d_{\epsilon }(\mu,\delta_x^q )\quad  \forall\
q\in\Pi_n\, .
$$
Similarly, call $\delta_x^p$ a  {\em best
  $d_{\epsilon }$-approximation of $\mu$, given $p\in \Pi_n$} if
$$
d_{\epsilon } (\mu,\delta_x^p)\le d_{\epsilon }(\mu,\delta_y^p)\quad  \forall\
y\in\Xi_n\, .
$$
Denote by $\delta_x^{\bullet}$ and $\delta_{\bullet}^p$ any best
$d_{\epsilon }$-approximation of $\mu$, given $x$ and $p$, respectively. Best
$d_{\epsilon }$-approximations, given $p=u_n := (n^{-1}, \ldots , n^{-1})$ are referred to
as best {\em uniform\/} $d_{\epsilon }$-approximations, and denoted
$\delta_{\bullet}^{u_n}$. Finally, $\delta_x^p$ is a {\em best
  $d_{\epsilon }$-approximation of\/} $\mu \in \cP$, denoted
$\delta_{\bullet}^{\bullet, n}$, if
$$
d_{\epsilon }(\mu,\delta_x^p)\le d_{\epsilon } (\mu,\delta_y^q)\quad  \forall\
y\in \Xi_n, q\in\Pi_n\, .
$$ 
Notice that usage of the symbols $\delta_x^{\bullet},$
$\delta^p_{\bullet},$ and $\delta_{\bullet}^{\bullet,n}$ always refers
to specific $\mu \in \cP$, $\epsilon >0 $, and $n\in \N$, all of which are usually
clear from the context.

\begin{prop}\label{prop209}
Let $\mu\in \cP$, $\epsilon  >0$, and $n\in \N$.
\begin{enumerate}
\item For every $x\in \Xi_n$, there exists a best $d_{\epsilon }$-approximation
  of $\mu$, given $x$, and $d_{\epsilon } (\mu, \delta_x^{\bullet}) =
  \ell^{\bullet}_x$. Moreover, $d_{\epsilon } (\mu, \delta_x^p) =
  \ell^{\bullet}_x$ with $p\in \Pi_n$ if and only if
\begin{equation}\label{eq210}
\epsilon  \ell_{F_{\mu}/\epsilon  , [x_{,j}, x_{, j+1}]} (P_{,j}/\epsilon ) \le \ell^{\bullet}_x \quad \forall j =0, \ldots , n \, .
\end{equation}
\item For every $p\in \Pi_n$, there exists a best $d_{\epsilon }$-approximation
  of $\mu$, given $p$, and $d_{\epsilon } (\mu, \delta_{\bullet}^p) =
  \ell_{\bullet} ^p$. Moreover, $d_{\epsilon } (\mu, \delta_x^p)
  =\ell_\bullet^p$ with $x\in \Xi_n$ if and only if
\begin{equation}\label{eq211}
\ell_{\epsilon  F_{\mu}^{-1}, [P_{,j-1}, P_{, j}]} (\epsilon  x_{,j}) \le  \ell_{\bullet}^p \quad \forall j =1, \ldots , n \, .
\end{equation}
\item There exists a best $d_{\epsilon }$-approximation of $\mu$, and $d_{\epsilon } (\mu,
  \delta_{\bullet}^{\bullet, n}) = \ell_{\bullet}^{\bullet,
    n}$. Moreover, $d_{\epsilon } (\mu, \delta_x^p) =
  \ell_{\bullet}^{\bullet, n} $ with $x\in \Xi_n$, $p\in \Pi_n$ if
  and only if {\rm (\ref{eq210})} and {\rm (\ref{eq211})} hold with
  $\ell^{\bullet, n}_{\bullet}$ instead of $\ell^{\bullet}_x$
  and $\ell_{\bullet} ^p$, respectively.
\end{enumerate}
\end{prop}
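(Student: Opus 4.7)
The plan is to exploit the two explicit formulae in Proposition~\ref{prop207} together with the structural properties of $\ell_{f,I}$ from Proposition~\ref{prop204}, reducing each minimization to a finite-dimensional selection problem.

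For part (i), fix $x\in\Xi_n$. By the first formula of Proposition~\ref{prop207}, $d_\epsilon(\mu,\delta_x^p)=\epsilon\max_{j=0}^n\ell_{F_\mu/\epsilon,[x_{,j},x_{,j+1}]}(P_{,j}/\epsilon)$ for every $p\in\Pi_n$. The terms at $j=0$ and $j=n$ do not depend on $p$ (since $P_{,0}=0$ and $P_{,n}=1$) and equal the first and last entries in the definition of $\ell^\bullet_x$, while Proposition~\ref{prop204}(ii) gives $\ell_{F_\mu/\epsilon,[x_{,j},x_{,j+1}]}(P_{,j}/\epsilon)\ge\ell^*_{F_\mu/\epsilon,[x_{,j},x_{,j+1}]}$ for $1\le j\le n-1$. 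Taking the max yields $d_\epsilon(\mu,\delta_x^p)\ge\ell^\bullet_x$ unconditionally, and existence of a minimizer follows from the Lipschitz continuity of $p\mapsto d_\epsilon(\mu,\delta_x^p)$ (Proposition~\ref{prop204}(i)) on the compact simplex $\Pi_n$. For the matching upper bound I would argue that each sub-level set $A_j:=\{y\in\R:\epsilon\ell_{F_\mu/\epsilon,[x_{,j},x_{,j+1}]}(y/\epsilon)\le\ell^\bullet_x\}$ is a non-empty closed interval: $\ell_{f,I}$ is the maximum of a non-increasing and a non-decreasing Lipschitz function, hence quasi-convex, and in the present setting (bounded interval $[x_{,j},x_{,j+1}]$, $f=F_\mu/\epsilon$ a scaled distribution function) it diverges to $+\infty$ at $\pm\infty$ and therefore attains its infimum $\ell^*_{f,I}$. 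One can then pick cumulative weights $0\le P_{,1}\le\cdots\le P_{,n-1}\le1$ with $P_{,j}\in A_j$ by a greedy left-to-right selection; the resulting $p\in\Pi_n$ satisfies (\ref{eq210}) and witnesses $d_\epsilon(\mu,\delta_x^p)=\ell^\bullet_x$. The ``if and only if'' characterization is then immediate from the max-structure of the formula.

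Part (ii) is entirely dual: one uses the second formula in Proposition~\ref{prop207}, with $F_\mu/\epsilon$ replaced by $\epsilon F_\mu^{-1}$ and intervals $[P_{,j-1},P_{,j}]$ in place of $[x_{,j},x_{,j+1}]$. Since the atoms $x_{,1}\le\cdots\le x_{,n}$ range over all of $\R$ (no fixed end-terms), the analogue of the greedy selection produces $x_{,j}$ with $\ell_{\epsilon F_\mu^{-1},[P_{,j-1},P_{,j}]}(\epsilon x_{,j})\le\ell_\bullet^p$, and the remaining steps proceed as in (i). Part (iii) then follows by combining (i) and (ii): for every $(x,p)$, part~(ii) gives $d_\epsilon(\mu,\delta_x^p)\ge\ell_\bullet^p\ge\ell_\bullet^{\bullet,n}$, while $p\mapsto\ell_\bullet^p$ is continuous on the compact $\Pi_n$ by Proposition~\ref{prop204}(iv), so $\ell_\bullet^{\bullet,n}$ is attained at some $p^*$. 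Applying (ii) to $p^*$ yields $x^*$ with $d_\epsilon(\mu,\delta_{x^*}^{p^*})=\ell_\bullet^{\bullet,n}$, and the equality case is read off by substituting $\ell_\bullet^{\bullet,n}$ for $\ell^\bullet_x$ and $\ell_\bullet^p$ in (\ref{eq210}) and (\ref{eq211}).

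The principal obstacle is the monotone selection step inside parts (i) and (ii). Although each $A_j$ is automatically a closed interval, the greedy argument requires that the family $A_1,\ldots,A_{n-1}$ shift rightward with $j$; this is verified by the explicit description $A_j=[F_{\mu-}(x_{,j+1}-\ell^\bullet_x/\epsilon)-\ell^\bullet_x,\,F_\mu(x_{,j}+\ell^\bullet_x/\epsilon)+\ell^\bullet_x]$, both endpoints of which are manifestly non-decreasing in $j$. A related delicate point flagged by Remark~\ref{rem205}(ii) is that $\ell_{f,I}$ need not attain its infimum in general; in the present setting, however, the distribution-function hypothesis forces each relevant $\ell_{f,I}$ to attain its minimum, so every $A_j$ is non-empty even in the degenerate case $\ell^\bullet_x/\epsilon=\ell^*_{F_\mu/\epsilon,[x_{,j},x_{,j+1}]}$, and the construction is well-defined.
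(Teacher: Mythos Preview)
The paper does not supply its own proof of Proposition~\ref{prop209}; the opening paragraph of Section~\ref{sec2} states that ``the stated results are straightforward extensions of \cite{BX, XThesis}, and the reader is referred to these references for further details and elementary proofs.'' There is therefore no in-paper argument to compare against, and your proposal must be judged on its own.

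Your plan is essentially correct: the lower bound via Proposition~\ref{prop204}(ii), existence via continuity on the compact simplex, and the ``if and only if'' characterization read off from the max-formula in Proposition~\ref{prop207} are all fine, as is the reduction of part~(iii) to part~(ii) via continuity of $p\mapsto\ell_\bullet^p$. One detail in the constructive step of part~(i) needs a patch. Your greedy selection must produce cumulative weights in $[0,1]$, but the right endpoint $b_j=F_\mu(x_{,j}+\ell^\bullet_x/\epsilon)+\ell^\bullet_x$ of $A_j$ may exceed~$1$, so a naive greedy $P_{,j}=\max(a_j,P_{,j-1})$ need not give $P_{,n-1}\le1$. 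The remedy is immediate: work with $\widetilde A_j=A_j\cap[0,1]$, which is non-empty because $a_j\le F_{\mu-}(\cdot)\le1$ and $b_j\ge F_\mu(\cdot)\ge0$, and whose truncated endpoints $\max(a_j,0)$ and $\min(b_j,1)$ are still non-decreasing in~$j$; the greedy then stays in $[0,1]$ automatically. The dual issue in part~(ii) is that the analogous intervals $B_j=[\,\epsilon F_{\mu-}^{-1}(P_{,j}-\ell_\bullet^p)-\ell_\bullet^p,\ \epsilon F_\mu^{-1}(P_{,j-1}+\ell_\bullet^p)+\ell_\bullet^p\,]$ may be half-infinite (e.g.\ when $P_{,j-1}+\ell_\bullet^p\ge1$), but they always contain finite points, and the same monotone-endpoint argument produces a valid $x\in\Xi_n$. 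With these adjustments the argument goes through.
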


\noindent
The following two examples illustrate Proposition \ref{prop209}. Notice
that in either example the sequences $\bigl( d_{\epsilon } (\mu, \delta_{\bullet}^{u_n})
\bigr)$ and $\bigl( d_{\epsilon } (\mu, \delta_{\bullet}^{\bullet, n})
\bigr)$ both converge to $0$ at the same rate $(n^{-1})$. As demonstrated in Sections
\ref{sec3} and \ref{sec4} for best uniform and best
$d_{\epsilon }$-approximations, respectively, this rate is not specific to these
examples, but rather indicative of much more general mechanisms.

\begin{example}\label{ex212}
Consider the exponential distribution {\sf exp$(a)$} with parameter $a>0$, i.e., let
$F_{\mu} (x) = 1 - e^{-ax}$ for all $x\ge 0$. From Proposition
\ref{prop209} it is easily deduced that $\delta_x^{u_n}$ with $x\in \Xi_n$ is a best
uniform $d_{\epsilon }$-approximation of $\mu$ if and only if
$$
x_{,j} \in \left[
-\frac1{a} \log \left( 1 - \frac{j}{n} + \ell_{\bullet} ^{u_n} \right)
- \frac{\ell_{\bullet} ^{u_n}}{\epsilon } , -\frac1{a} \log\left(  1 -
  \frac{j-1}{n} - \ell_{\bullet}^{u_n} \right) + \frac{\ell_{\bullet} ^{u_n}}{\epsilon }
\right] \quad \forall j=1,\ldots , n \, ,
$$
with $\ell_{\bullet} ^{u_n} = d_{\epsilon } (\mu, \delta _{\bullet}^{u_n})$
being the unique solution of $n\ell (e^{2a\ell /\epsilon } + 1) =1$.
A straightforward analysis of the latter equation yields the
asymptotic equality
\begin{equation}\label{eq214}
n d_{\epsilon } (\mu, \delta_\bullet^{u_n}) = \frac12 -
\frac{a}{4\epsilon } \, n^{-1} +
\cO (n^{-2}) \quad \mbox{\rm as } n\to \infty \, .
\end{equation}
A best $d_{\epsilon }$-approximation of $\mu$ also exists, and in fact is
unique, with
$$
x_{,j} = -\frac1{a} \log \left( \frac{e^{2 a \ell_{\bullet}^{\bullet,
        n} (n-j)/\epsilon } - 1}{ e^{2a\ell_{\bullet}^{\bullet , n}n/\epsilon }
    -1} + \ell_{\bullet}^{\bullet, n}\right) -
\frac{\ell_{\bullet}^{\bullet , n}}{\epsilon }
\, , \quad P_{,j} =  \frac{1 - e^{-2a\ell_{\bullet}^{\bullet,
      n}j/\epsilon }}{ 1 - e^{-2a\ell_{\bullet}^{\bullet,
      n}n/\epsilon } }\quad \forall j = 1, \ldots , n \, ,
$$
where $\ell_{\bullet}^{\bullet , n}  = d_{\epsilon } (\mu, \delta
_{\bullet}^{\bullet, n})$ solves $\ell e^{2na \ell/\epsilon } = \ell +
\tanh (a\ell/\epsilon )$. Similarly to before, an analysis of this equation yields
\begin{equation}\label{eq2155}
n d_{\epsilon } (\mu, \delta_{\bullet}^{\bullet, n}) = c_1 - \frac{a^ 2
  c_1^2 }{6\epsilon  (a+\epsilon )} \, n^{-2}
+ \cO (n^{-4}) \quad \mbox{\rm as } n \to \infty \, ,
\end{equation}
with $c_1 = \frac12 \epsilon  \log (1+a/\epsilon ) /a < \frac12$. Note that
$d_{\epsilon } (\mu, \delta_{\bullet}^{\bullet, n}) <
d_{\epsilon } (\mu, \delta_{\bullet}^{u_n}) $ for every $n\ge 2$; see
also Figure \ref{fig2}.
\end{example}

\begin{figure}[ht]
\psfrag{tv1}[]{\small $1$}
\psfrag{tv05}[]{\small $\frac12$}
\psfrag{th0}[]{\small $0$}
\psfrag{th1}[]{\small $1$}
\psfrag{th2}[]{\small $2$}
\psfrag{th3}[]{\small $3$}
\psfrag{texp}[]{\small $\mu = {\sf exp}(1)$}
\psfrag{tu4}[r]{\small $4d_{1} (\mu , \delta_{\bullet}^{u_4}) = 0.4446\quad$}
\psfrag{tui}[r]{\small $\lim_{n\to \infty} n d_{1} (\mu ,
  \delta_{\bullet}^{u_n}) = \frac12 =  0.5000$}
\psfrag{tb4}[r]{\small $4d_{1} (\mu , \delta_{\bullet}^{\bullet, 4}) =
  0.3459\:\:$}
\psfrag{tbi}[r]{\small $\lim_{n\to \infty} n d_{1} (\mu ,
  \delta_{\bullet}^{\bullet, n}) =  \frac12 \log 2 =  0.3465$}
%
%
\begin{center}
\includegraphics{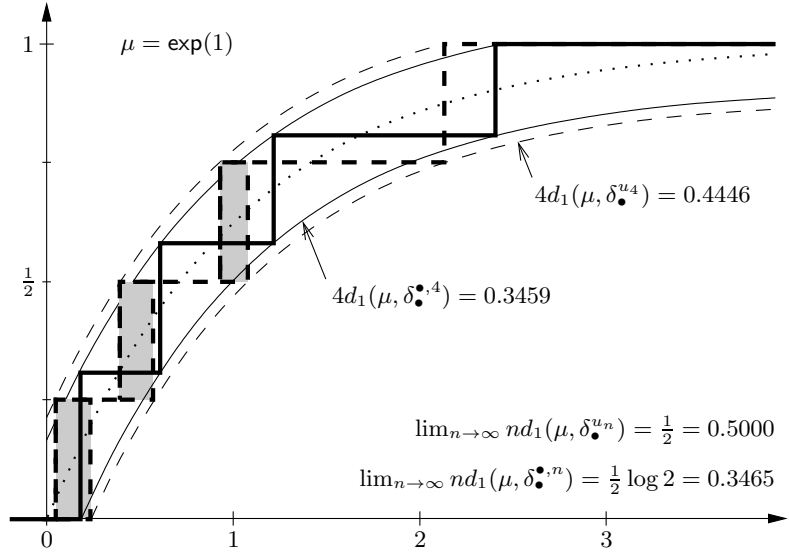}
\end{center}
\caption{Illustrating $d_1$-approximations of the standard exponential
distribution (dotted curve) with $n=4$ atoms: While the best approximation
$\delta_{\bullet}^{\bullet, 4}$ (solid
line) is unique, best uniform approximations $\delta_{\bullet}^{u_4}$ (broken lines) are not;
see Example \ref{ex212}.}\label{fig2}
\end{figure}

\begin{example}\label{ex216}
Fix $b>1$, and let $F_{\mu} (x) = \displaystyle \frac{\log x}{\log b}$
for all $1\le x \le b$. Usually referred to as {\em Benford's law\/} base
$b$, this distribution has many interesting properties; see, e.g.,
\cite{BH, BX} and the references therein. 
As in the previous example,
best uniform $d_{\epsilon }$-approximations of $\mu$ are non-unique, 
yet $\ell_{\bullet}^{u_n}$ is the unique solution of $b^{1-\ell} -
b^{1+\ell - 1/n} = 2\ell /\epsilon $, which in turn yields
\begin{equation}\label{eq217}
n d_{\epsilon } (\mu, \delta_\bullet^{u_n}) = c_2  -
\frac{c_2^2}{b\epsilon  } \, n^{-1} +
\cO (n^{-2}) \quad \mbox{\rm as } n\to \infty \, ,
\end{equation}
with $c_2 = \frac12 \epsilon  b \log b/ (1 + \epsilon  b\log b) <
\frac12$. Also similarly to Example \ref{ex212}, best
$d_{\epsilon }$-approximations of $\mu$ are unique, $\ell_{\bullet}^{\bullet,n}$
solves $b^{2n\ell} \bigl( \ell + \epsilon  \sinh (\ell \log b) \bigr) =
\ell + \epsilon  b \sinh (\ell \log b)$, and a straightforward
analysis yields
\begin{equation}\label{eq218}
n d_{\epsilon } (\mu, \delta_{\bullet}^{\bullet, n}) = c_3  + \frac{(b-1)
  c_2^2 c_3^2 b^{2c_3 - 2}}{3\epsilon  } \, n^{-2} + \cO (n^{-4}) \quad \mbox{\rm as } n \to \infty \, ,
\end{equation}
where $c_3  =\displaystyle \frac{\log (1+\epsilon  b\log b) - \log (1+\epsilon 
  \log b)}{2\log b} < c_2$; again $d_{\epsilon } (\mu,
\delta_{\bullet}^{\bullet, n}) < d_{\epsilon } (\mu,
\delta_{\bullet}^{u_n}) $ for every $n\ge 2$.
\end{example}

\section{Best uniform L\'evy approximations}\label{sec3}

This section provides a detailed asymptotic analysis of $d_{\epsilon } (\mu,
\delta_{\bullet}^{u_n})$ for any $\mu \in
\cP$. Notice the uniform bound $n d_{\epsilon } (\mu,
\delta_{\bullet}^{u_n}) \le \frac12$, due to Proposition \ref{prop204}. Thus $  d_{\epsilon } (\mu,
\delta_{\bullet}^{u_n})\to 0$ as $n\to \infty$ at an (upper) rate not slower
than $(n^{-1})$. Except for trivial cases, this rate is sharp.

\begin{lem}\label{lem301}
Let $\mu \in \cP$ and $\epsilon >0$. Then $\limsup_{n\to \infty} n d_{\epsilon } (\mu,
\delta_{\bullet}^{u_n}) >0$ unless $\mu = \delta_a$ for some $a\in \R$.
\end{lem}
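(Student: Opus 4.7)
The plan is to argue by contrapositive: assume $\lim_{n\to\infty} n d_{\epsilon}(\mu, \delta_{\bullet}^{u_n}) = 0$ (equivalent to $\limsup =0$ since the sequence is non-negative) and deduce $\mu = \delta_c$ for some $c\in\R$. The engine is a quantitative integer-trap for the integers $k_n(t) := nF_{\delta_{\bullet}^{u_n}}(t) \in \{0,1,\ldots,n\}$ at generic $t\in\R$.

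Since $F_{\mu}$ is non-decreasing on $\R$, Lebesgue's theorem furnishes a dense set $E \subset \R$ of full Lebesgue measure at each point of which $F_{\mu}$ has a finite non-negative derivative $D_t$; in particular every $t\in E$ is a continuity point of $F_\mu$. Fix $t\in E$ and $\eta>0$. By hypothesis $d_{\epsilon}(\mu,\delta_{\bullet}^{u_n})<\eta/n$ for all sufficiently large $n$; the definition of $d_{\epsilon}$ from Section \ref{sec2} applied with $y=\eta/n$ thus gives
\[
F_{\mu-}\bigl(t-\eta/(n\epsilon)\bigr)-\eta/n \;\le\; F_{\delta_{\bullet}^{u_n}}(t) \;\le\; F_{\mu}\bigl(t+\eta/(n\epsilon)\bigr)+\eta/n\,.
\]
Combining with the first-order expansion $F_{\mu}(t\pm h)=F_{\mu}(t)\pm D_t h+o(h)$ (valid because $t\in E$) and multiplying by $n$ yields
\[
\bigl|k_n(t)-nF_{\mu}(t)\bigr| \;\le\; (1+D_t/\epsilon)\,\eta+o(1) \qquad (n\to\infty,\ \eta\text{ fixed}).
\]

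Differencing this trap at two points $t_1<t_2$ of $E$, and writing $r:=F_{\mu}(t_2)-F_{\mu}(t_1)\in[0,1]$ and $C:=2+(D_{t_1}+D_{t_2})/\epsilon$, gives $|(k_n(t_2)-k_n(t_1))-nr|\le C\eta+o(1)$. Choosing $\eta\in(0,1/(2C))$, the window around $nr$ has width $<1$ for all large $n$; since the integer $k_n(t_2)-k_n(t_1)$ lies in it and at most one integer can, $\mathrm{dist}(nr,\Z)\le C\eta+o(1)$. Letting first $n\to\infty$ and then $\eta\downarrow 0$ forces $\mathrm{dist}(nr,\Z)\to 0$, and a standard equidistribution/periodicity argument (Weyl if $r$ is irrational, rotation of $\{0,1/q,\ldots,(q-1)/q\}$ if $r=p/q$ in lowest terms) then forces $r\in\{0,1\}$. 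Hence $F_{\mu}|_E$ takes only the values $0$ and $1$; by monotonicity there is a threshold $c\in\R$ with $F_{\mu}|_E\equiv 0$ on $(-\infty,c)$ and $F_{\mu}|_E\equiv 1$ on $(c,+\infty)$, and density of $E$ together with the right-continuity of $F_{\mu}$ yields $F_{\mu-}(c)=0$ and $F_{\mu}(c)=1$, i.e., $\mu=\delta_c$, contradicting the hypothesis $\mu\ne\delta_a$.

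The principal obstacle is the control in the second display: at a merely continuous (non-differentiable) point $t$, the quantity $n\bigl(F_{\mu}(t+\eta/(n\epsilon))-F_{\mu}(t)\bigr)$ can blow up as $n\to\infty$ (consider $F_{\mu}(x)=\sqrt{x}$ near $0$), and the width-$<1$ step collapses. It is precisely Lebesgue's a.e.\ differentiability theorem for monotone functions that keeps the trap width bounded in $n$ and makes the integer-trap argument go through.
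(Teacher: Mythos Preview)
Your proof is correct and takes a genuinely different route from the paper's.

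The paper works on the quantile side: with $g=F_{\mu}^{-1}$ it derives, from $\ell_{\bullet}^{u_n}=\omega_n/n$, the two-sided bound
\[
g_{-}\!\left(\tfrac{j-2\omega_n}{n}\right)-\tfrac{2\omega_n}{n}\le g\!\left(\tfrac{j-1+2\omega_n}{n}\right)+\tfrac{2\omega_n}{n},
\]
and then \emph{telescopes} across $j$ while \emph{interlacing} the partitions for $n$ and $n+1$ to obtain $g(y)-g(x)\le 12\omega_{n+1}+8\omega_n$ for arbitrary $0<x<y<1$; if $\omega_n\to 0$ this forces $g$ constant on $]0,1[$, i.e., $\mu=\delta_a$. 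No differentiability of anything is needed; the argument is entirely elementary and yields explicit constants.

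You instead stay on the distribution-function side, invoke Lebesgue's a.e.\ differentiability of the monotone $F_{\mu}$ to control $n\bigl(F_{\mu}(t\pm\eta/(n\epsilon))-F_{\mu}(t)\bigr)$, and then run an integer-trap: the integer $k_n(t_2)-k_n(t_1)$ must lie within $C\eta+o(1)$ of $nr$ with $r=F_{\mu}(t_2)-F_{\mu}(t_1)$, whence $\mathrm{dist}(nr,\Z)\to 0$ and $r\in\{0,1\}$ by equidistribution/periodicity. This is conceptually attractive because it foreshadows exactly the number-theoretic quantity $\mathrm{dist}(n x,\Z)$ that drives Lemma~\ref{lem308} and equation~(\ref{eq307}). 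The cost is that you import two external facts (Lebesgue differentiability and the $r\in\Z$ consequence of $\mathrm{dist}(nr,\Z)\to 0$); the paper's telescoping avoids both. A minor note: Weyl equidistribution is more than you need --- the elementary observation that $[(n+1)r]-[nr]=r+(\epsilon_n-\epsilon_{n+1})$ with $\epsilon_n:=nr-[nr]\to 0$ already forces $r\in\Z$.
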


\begin{proof}
Throughout the proofs of this section, write $g = F_{\mu}^{-1}$ for
convenience, and let $\omega_n = n d_{\epsilon} (\mu,
\delta^{u_n}_{\bullet})$ for all $n\in \N$, as well as $\omega^- =
\liminf_{n\to \infty} \omega_n$ and $\omega^+ = \limsup_{n\to \infty} \omega_n$.
Since $\delta_a \circ T_{\epsilon } ^{-1} = \delta_{\epsilon  a}$, by (\ref{eq206}) it
suffices to consider the case of $\epsilon  = 1$. Fix any
$0<x<y<1$. Assume that $\omega^+ = 0$, i.e., $\lim_{n\to \infty} \omega_n = 0$. Note that, for every $n\in \N$,
\begin{equation}\label{eq302}
g_- \left( \frac{j - 2\omega_n}{n}\right) - \frac{2\omega_n}{n} \le g
\left(
\frac{j -1 +  2\omega_n}{n}
\right) + \frac{2\omega_n}{n}  \quad \forall j = 1 , \ldots , n \, ,
\end{equation} 
by the definition of $\omega_n$. Also, observe that for all sufficiently large $n$,
$$
\frac{j + 2 \omega_{n+1}}{n+1} < \frac{j - 2\omega_n}{ n} \quad
\mbox{\rm and} \quad
\frac{j + 2\omega_n}{n} < \frac{j+1 - 2\omega_{n+1}}{n+1} \quad
\forall j = \lfloor nx \rfloor , \ldots , \lfloor ny \rfloor + 1 \, ,
$$
which, together with (\ref{eq302}), yields
\begin{align*}
g(y) - g(x) & \le g \left( \frac{\lfloor ny \rfloor + 1 +
    2\omega_n}{n}\right) - g_- \left( \frac{\lfloor nx \rfloor -
    2\omega_n}{n}\right) \\[2mm]
& = \sum\nolimits_{j = \lfloor nx \rfloor}^{\lfloor ny \rfloor + 1}
\left( g \left( \frac{j +
    2\omega_n}{n}\right) - g_- \left( \frac{j -
    2\omega_n}{n}\right) \right) + \\
& \qquad \qquad + \sum\nolimits_{j = \lfloor nx \rfloor +1}^{\lfloor
  ny \rfloor + 1} \left( g_- \left( \frac{j -
    2\omega_n}{n}\right) - g \left( \frac{j - 1 +
    2\omega_n}{n}\right)\right)  \\[2mm]
& \le \sum\nolimits_{j = \lfloor nx \rfloor}^{\lfloor ny \rfloor + 1}
\left( g_- \left( \frac{j + 1 -    2\omega_{n+1}}{n+1}\right) - g \left( \frac{j +    2\omega_{n+1}}{n+1}\right) \right)
+ \frac{4\omega_n}{n} (\lfloor ny \rfloor - \lfloor nx \rfloor + 1) \\[2mm]
& \le \frac{4\omega_{n+1}}{n+1} (\lfloor ny \rfloor - \lfloor nx \rfloor +
2) + \frac{4\omega_n}{n} (\lfloor ny \rfloor - \lfloor nx \rfloor +
1) \le 12 \omega_{n+1} + 8 \omega_n  \, . 
\end{align*}
Since $\lim_{n\to \infty} \omega_n = 0$ by assumption, and $0<x<y<1$ have been arbitrary, $g(0) = g_- (1)= a$ for some $a\in
\R$, that is, $\mu = \delta_a$.
\end{proof}

For the subsequent finer analysis, the following terminology is useful: For every $f\in
\cF$, let $G_f$ be the {\em growth set\/} of $f$, i.e., let
$$
G_f = \bigl\{ x\in \R : f(x- \varepsilon) < f (x+ \varepsilon) \: \forall
\varepsilon > 0\bigr\} \, .
$$
Note that $G_f$ is closed in $\R$, and $G_f \ne \varnothing$ unless
$f$ is constant. For example, $G_{F_{\mu}} = \mbox{\rm supp}\, \mu$
and $\{0,1\} \subset G_{F_{\mu}^{-1}} \subset [0,1]$ for every $\mu
\in \cP$. Also, $f(x)\in \R$ whenever $f^{-1} (-\infty) < x < f^{-1}
(+\infty)$. With $\lambda_f \bigl( ]-\! \infty , f^{-1}(-\infty )]\bigr) := \lambda_f
\bigl( [f^{-1} (+\infty),
+\infty[\bigr) := 0$ and
$$
\lambda_f \bigl( ]x,y] \bigr) := f(y) - f(x) \quad \forall f^{-1} (-\infty) < x
\le y < f^{-1} (+\infty) \, ,
$$
therefore, $\lambda_f$ is a $\sigma$-finite positive Borel measure
concentrated on $G_f$. For example, $\lambda_{{\rm id}_{\R}} = \lambda$,
and $\lambda_{F_{\mu}} = \mu$ for every $\mu
\in \cP$. Also, $\mu^{-1} := \lambda_{F_{\mu}^{-1}}$ is a positive Borel measure
supported on $G_{F_{\mu}^{-1}}\subset [0,1]$,
referred to as the {\em inverse measure\/} of $\mu$; see, e.g., \cite{BL, XB}. For convenience, write
$G_{F_{\mu}}$ and $G_{F_{\mu}^{-1}}$ simply as $G_{\mu}$ and
$G_{\mu^{-1}}$, respectively. Note that $\mu^{-1} (\R) = \mu^{-1}
\bigl( ]0,1[ \bigr)=
\mbox{\rm diam}\, G_{\mu}$, and hence
$\mu^{-1} = 0$ precisely if $\mu = \delta_a$ for some $a\in
\R$. When rephrased utilizing this terminology, Lemma \ref{lem301} has the following
corollary.

\begin{prop}\label{prop303}
For every $\mu \in \cP$ and $\epsilon >0$, the following are
equivalent: 
\begin{enumerate}
\item $\lim_{n\to \infty} n d_{\epsilon } (\mu, \delta_{\bullet}^{u_n})=0$;
\item $d_{\epsilon } (\mu, \delta_{\bullet}^{u_n})=0$ for every $n\in \N$;
\item $\mu = \delta_a$ for some $a\in \R$;
\item $\mu^{-1} = 0$.
\end{enumerate}
\end{prop}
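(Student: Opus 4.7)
The plan is to establish the cycle (iii) $\Rightarrow$ (ii) $\Rightarrow$ (i) $\Rightarrow$ (iii), together with the equivalence (iii) $\iff$ (iv).

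First, the equivalence (iii) $\iff$ (iv) is already recorded immediately above the statement of the proposition: $\mu^{-1}(\R) = \mathrm{diam}\, G_\mu$, so $\mu^{-1} = 0$ exactly when $G_\mu = \mathrm{supp}\,\mu$ consists of a single point, i.e.\ $\mu = \delta_a$ for some $a \in \R$. Nothing further is needed here.

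Next, (iii) $\Rightarrow$ (ii) is immediate: if $\mu = \delta_a$, then for every $n\in\N$ one may take $x = (a,a,\ldots,a) \in \Xi_n$, obtaining $\delta_x^{u_n} = \sum_{j=1}^n n^{-1}\delta_a = \delta_a = \mu$, whence $d_\epsilon(\mu, \delta_\bullet^{u_n}) \le d_\epsilon(\mu, \delta_x^{u_n}) = 0$. And (ii) $\Rightarrow$ (i) is of course trivial, since under (ii) the sequence $n d_\epsilon(\mu, \delta_\bullet^{u_n})$ is identically zero.

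The only nontrivial implication is (i) $\Rightarrow$ (iii), but this is exactly the contrapositive of Lemma \ref{lem301}: if $\lim_{n\to\infty} n d_\epsilon(\mu, \delta_\bullet^{u_n}) = 0$, then in particular $\limsup_{n\to\infty} n d_\epsilon(\mu, \delta_\bullet^{u_n}) = 0$, so by Lemma \ref{lem301} one must have $\mu = \delta_a$ for some $a \in \R$. No additional work is required; the proposition is essentially a restatement of the lemma in the language of inverse measures, combined with the elementary observations above. There is no real obstacle — the substantive argument has already been carried out in Lemma \ref{lem301}.
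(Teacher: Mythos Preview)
Your proof is correct and matches the paper's intent: the paper explicitly frames Proposition~\ref{prop303} as a corollary obtained by ``rephrasing'' Lemma~\ref{lem301} in the language of inverse measures, and your cycle (iii) $\Rightarrow$ (ii) $\Rightarrow$ (i) $\Rightarrow$ (iii) together with the equivalence (iii) $\Leftrightarrow$ (iv) drawn from the text before the proposition is exactly that.
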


\noindent
The first main result in this section asserts that $\bigl( n d_{\epsilon }
(\mu, \delta_{\bullet}^{u_n})\bigr)$ does converge, to an easily
determined limit, if $\mu^{-1}$ is absolutely continuous. 
The result is reminiscent of a theorem regarding best uniform $d_{\sf
  W}$-approximations \cite[Thm.5.15]{XB} (see also \cite{C, GHMR}), but unlike in that theorem,
no integrability assumption on ${\rm d} \mu^{-1}/{\rm d}\lambda$ is
needed, and the limit in question always is finite. When formulating the result, it is
helpful to use the function $\Omega : \R \to \R$ with
$$
\Omega  (x) =   \frac{ x}{2 +  2 |x|} \quad \forall x \in
\R \, .
$$
Plainly, $\Omega$ is an increasing $C^1$-function, with
$|\Omega(x)|\le \frac12 |x|$ for all $x\in \R$, and
$\Omega  (\pm \infty) = \pm \frac12$. While the appearance of $\Omega$
in the following theorem is a simple consequence of the bound
(\ref{eq306}), the reader may find it curious to note that $2\Omega$
plays a prominent role in the theory of random walks \cite{DE}.

\begin{theorem}\label{thm304}
Let $\mu\in \cP$ and $\epsilon >0$. If $\mu^{-1}$ is absolutely
continuous (w.r.t.\ $\lambda$) then
\begin{equation}\label{eq305}
\lim\nolimits_{n\to \infty} n d_{\epsilon } (\mu , \delta_{\bullet}^{u_n})
= \left\| \,  \Omega \!  \left( \epsilon \frac{{\rm d}\mu^{-1}}{{\rm d}\lambda}
  \right)\right\|_{\infty} \, .
\end{equation}
\end{theorem}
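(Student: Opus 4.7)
The plan is to reduce $n d_{\epsilon}(\mu,\delta_{\bullet}^{u_n}) = n\ell_{\bullet}^{u_n}$ to an explicit expression in which $\Omega$ appears naturally, and then to pass to the limit using Lebesgue's differentiation theorem. By Proposition \ref{prop209}(ii) applied with $p = u_n$, one has $\ell_{\bullet}^{u_n} = \max_{j=1}^n \ell^{*}_{\epsilon g,\, [(j-1)/n,\, j/n]}$, where $g := F_{\mu}^{-1}$. Under absolute continuity of $\mu^{-1}$, $g$ is absolutely continuous on $(0,1)$ with density $\phi := {\rm d}\mu^{-1}/{\rm d}\lambda \ge 0$; in particular, $g$ is continuous on $(0,1)$. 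Hence, for each $j$, the number $y_j^n := \ell^{*}_{\epsilon g,\, [(j-1)/n,\,j/n]} \in [0, \tfrac1{2n})$ is characterized by the balance equation
\[
\epsilon\bigl(g(j/n - y_j^n) - g((j-1)/n + y_j^n)\bigr) = 2 y_j^n \, .
\]
Setting $l_j^n := (j-1)/n + y_j^n$, $r_j^n := j/n - y_j^n$, $z_j^n := n y_j^n$, and $\bar\phi_j^n := (r_j^n - l_j^n)^{-1}\int_{l_j^n}^{r_j^n}\phi\, {\rm d}\lambda$ (well-defined since $\phi \in L^1_{{\rm loc}}((0,1))$), the balance equation rearranges to the key identity
\[
z_j^n \;=\; \frac{\epsilon\bar\phi_j^n}{2 + 2\epsilon\bar\phi_j^n} \;=\; \Omega(\epsilon\bar\phi_j^n) \, .
\]
Since $\Omega$ is strictly increasing on $[0,+\infty]$, it follows that $n\ell_{\bullet}^{u_n} = \max_j z_j^n = \Omega(\epsilon\max_j\bar\phi_j^n)$, and (\ref{eq305}) reduces to showing $\max_j \bar\phi_j^n \to \|\phi\|_{\infty}$ as $n\to\infty$ (using $\Omega(+\infty) := \tfrac12$).

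The upper bound $\max_j\bar\phi_j^n \le \|\phi\|_{\infty}$ is immediate from the definition of essential supremum. For the lower bound, I would fix $c < \|\phi\|_{\infty}$, choose $t^* \in (0,1)$ with $\phi(t^*) > c$ that is a Lebesgue point of $\phi$ (so $\lim_{r \downarrow 0}(2r)^{-1}\int_{t^*-r}^{t^*+r}|\phi - \phi(t^*)|\, {\rm d}\lambda = 0$; such $t^*$ exists at $\lambda$-a.e.\ point of $\{\phi > c\}$), and take $j_n$ with $t^* \in ((j_n-1)/n, j_n/n]$, so that the inner interval $I_n := [l_{j_n}^n, r_{j_n}^n]$ is contained in $B_n := [t^* - 1/n,\, t^* + 1/n]$. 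Assuming, for contradiction, that $\bar\phi_{j_n}^n \le c$ along a subsequence, the key identity forces $z_{j_n}^n \le \Omega(\epsilon c) < \tfrac12$, hence $|I_n|/|B_n| \ge \alpha := (1 - 2\Omega(\epsilon c))/2 > 0$, and the Lebesgue point property yields
\[
|\bar\phi_{j_n}^n - \phi(t^*)| \;\le\; \frac{1}{|I_n|}\int_{B_n}|\phi - \phi(t^*)|\, {\rm d}\lambda \;\le\; \alpha^{-1}\cdot\frac{1}{|B_n|}\int_{B_n}|\phi - \phi(t^*)|\, {\rm d}\lambda \;\to\; 0 \, ,
\]
contradicting $\bar\phi_{j_n}^n \le c < \phi(t^*)$. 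Thus $\max_j\bar\phi_j^n > c$ for all sufficiently large $n$; letting $c \uparrow \|\phi\|_{\infty}$ and invoking continuity of $\Omega$ on $[0,+\infty]$ gives $n\ell_{\bullet}^{u_n} \to \Omega(\epsilon\|\phi\|_{\infty}) = \|\Omega(\epsilon\phi)\|_{\infty}$, as required.

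The main obstacle is the lower bound: although $t^*$ lies in the outer sub-interval $((j_n-1)/n, j_n/n]$, there is no a priori reason that $t^* \in I_n$, or even that $|I_n|$ is a non-negligible fraction of $|B_n| = 2/n$. The key observation is that these two potential obstructions cancel, via the key identity: if $|I_n|$ shrinks faster than $|B_n|$, then $z_{j_n}^n$ is forced close to $\tfrac12$ and hence $\bar\phi_{j_n}^n$ itself is already large (and in fact tends to $+\infty$), making the required lower bound trivial along such subsequences. It is precisely this trade-off, encoded by $\Omega$, that makes the argument succeed without any further integrability or boundedness assumption on $\phi$.
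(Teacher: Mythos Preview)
Your proof is correct and follows essentially the same route as the paper's: both reduce $n d_\epsilon(\mu, \delta_\bullet^{u_n})$ to $\Omega$ applied to averages of $g' = {\rm d}\mu^{-1}/{\rm d}\lambda$ over subintervals of width $\sim 1/n$, and then invoke Lebesgue differentiation (via nicely shrinking intervals) for the lower bound. The only cosmetic difference is that the paper works with the single value $\omega_n = n\ell_\bullet^{u_n}$ throughout, obtaining an inequality with equality for at least one $j$, whereas you carry the individual $z_j^n$ and obtain the exact identity $z_j^n = \Omega(\epsilon\bar\phi_j^n)$.
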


\begin{proof}
Since $\displaystyle \frac{{\rm d} (\mu \circ T_{\epsilon }^{-1})^{-1}}{{\rm d}\lambda} =
\epsilon   \frac{{\rm d} \mu^{-1}}{{\rm d} \lambda}$, it is enough to
prove (\ref{eq305}) for $\epsilon  = 1$. Using the same symbols as in
the proof of Lemma \ref{lem301}, for every
$n\in \N$ let 
$$
J_{n,j} = \left[ \frac{j-1 + \omega_n}{n} , \frac{j -
    \omega_n}{n}\right] \quad \forall j = 1 , \ldots , n \, .
$$
Note that $\omega_n < \frac12$ for every $n$ since $g$ is
continuous. Moreover, 
$$
g\left( \frac{j- \omega_n}{n}\right) - g \left( \frac{j-1 +
    \omega_n}{n}\right) \le \frac{2\omega_n}{n} \quad \forall j = 1 ,
\ldots , n \, , 
$$
and consequently, by the absolute continuity of $g$,
\begin{equation}\label{eq306}
\frac{2\omega_n}{1 - 2\omega_n} \ge \frac{  g\bigl( ( j- \omega_n)/n
  \bigr) - g \bigl( ( j-1 +
    \omega_n)/ n \bigr)  }{ (1 - 2 \omega_n)/n} = \frac1{\lambda
(J_{n,j})} \int_{J_{n,j}} \!\! g' \, {\rm d}\lambda \quad \forall j =1,
\ldots , n \, .
\end{equation}
Equality holds on the left in (\ref{eq306}) for at least one
$j$, and for that $j$,
$$
\frac{2\omega_n}{1 - 2\omega_n} = \frac1{\lambda
(J_{n,j})} \int_{J_{n,j}} \! \! g' \, {\rm d}\lambda \le \|g'\|_{\infty} \, ,
$$
from which it is clear that
\begin{equation}\label{eq3065}
\omega^+ \le \frac{\|g'\|_{\infty}}{2 + 2 \|g'\|_{\infty}} =
\|\Omega (g')\|_{\infty} \le \frac12 \, .
\end{equation}
Since (\ref{eq305}) trivially holds when $\omega^- = \frac12$,
henceforth assume $\omega^- < \frac12$, and pick $n_1 < n_2 <
\ldots$ so that $\lim_{k \to \infty} \omega_{n_k} =
\omega^-$. Given any $0<x<1$, let $j_k (x) = \lfloor n_k x \rfloor + 1
\in \{1, \ldots, n_k\}$, and note that $J_{n_k, j_k(x)} \subset [x -1/n_k
, x+1/n_k]$, but also
$$
\frac{\lambda (J_{n_k, j_k(x)})}{\lambda ([x - 1/n_k , x+ 1/n_k])} =
\frac{1}{2} - \omega_{n_k} \: \stackrel{k\to \infty}{\longrightarrow} \: \frac12 - \omega^- > 0 \, .
$$
Thus the sequence $(J_{n_k, j_k(x)})_{k\in \N}$ shrinks to $x$ nicely,
and by \cite[Thm.7.10]{Rudin},
$$
g'(x) = \lim\nolimits_{k\to \infty} \frac1{\lambda (J_{n_k , j_k
    (x)})} \int_{J_{n_k, j_k(x)}} \!\!\!\!\!\!\!\! g' \, {\rm d}\lambda \le
\lim\nolimits_{k \to \infty} \frac{2\omega_{n_k}}{1 - 2 \omega_{n_k}}
= \frac{2\omega^-}{1 - 2\omega^-} 
$$
for $\lambda$-almost every $x\in [0,1]$. It follows that $\omega^- \ge
\Omega ( g' )$ holds $\lambda$-almost everywhere, and hence $\omega^- \ge \|\Omega
(g')\|_{\infty}$. Together with (\ref{eq3065}), this completes the proof.
\end{proof}

If $\mu^{-1}$ is singular then the asymptotic behaviour of $\bigl( d_{\epsilon } (\mu,
\delta_{\bullet}^{u_n})\bigr)$ is controlled by a different
mechanism. To prepare for the general result, observe that
$\liminf_{n\to \infty} \mbox{\rm dist} (nx,\Z)=0$ for every $x\in \R$, whereas
\begin{equation}\label{eq307}
\limsup\nolimits_{n\to \infty} \mbox{\rm dist} (nx,\Z) = 
\left\{
\begin{array}{ll}
\frac12 (q-1)/ q & \mbox{\rm if $x =p/q$ with coprime $p\in \Z ,q\in \N$, $q$
  odd,}\\[1mm]
\frac12 & \mbox{\rm otherwise;}
\end{array}
\right.
\end{equation}
in particular, $\limsup\nolimits_{n\to \infty}  \mbox{\rm dist}
(nx,\Z)\ge \frac13 $ unless $x\in \Z$. Defining $\iota : \R \to \N_0
\cup \{ + \infty\} $ as
$$
\iota (x) = 2 \inf \bigl\{ n\in \N_0 : (2n+1) x \in  \Z \bigr\} \, ,
$$
notice that the right-hand side in (\ref{eq307}) is nothing other than
$\Omega \circ \iota (x)$. With this, consider the very simple
example of $\mu_a = a \delta_{-1} + (1-a)\delta_{1}$ for some
$0<a<1$, for which $G_{\mu_a^{-1}} = \{0,a,1\}$. It is readily
confirmed that $nd_{\epsilon } (\mu_a, \delta_{\bullet}^{u_n}) = \mbox{\rm dist} (na,\Z)$
for all sufficiently large $n$, and hence
$\liminf_{n\to \infty} n d_{\epsilon } (\mu_a, \delta_{\bullet}^{u_n}) = 0$,
as well as
$$
\limsup\nolimits_{n\to \infty} n d_{\epsilon } (\mu_a,
\delta_{\bullet}^{u_n}) = \Omega \circ  \iota (a)  = \max
\Omega \circ  \iota (G_{\mu_a^{-1}})  \, .
$$
This equality is but one manifestation of a general principle.

\begin{lem}\label{lem308}
Let $\mu \in \cP$ and $\epsilon >0$. If $\mu^{-1}$ is singular
(w.r.t.\ $\lambda$) then
\begin{equation}\label{eq3081}
\limsup\nolimits_{n\to \infty} n d_{\epsilon } (\mu ,
\delta_{\bullet}^{u_n}) = \sup \Omega \circ \iota (G_{\mu^{-1}}) \, .
\end{equation}
\end{lem}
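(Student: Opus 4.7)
The plan is to reduce to $\epsilon=1$ via the dilation identity~\eqref{eq206}, so that $\omega_n:=nd_1(\mu,\delta_\bullet^{u_n})=n\max_{j=1}^n\ell_{g,I_{j,n}}^*$ by Proposition~\ref{prop207} applied with $p=u_n$; here $g=F_\mu^{-1}$ and $I_{j,n}=[(j-1)/n,j/n]$. Set $\omega^+=\limsup_n\omega_n$ and $S=\sup\Omega\circ\iota(G_{\mu^{-1}})$; the aim is to prove $\omega^+=S$ (the trivial case $\mu=\delta_a$, where both sides vanish, is excluded by Proposition~\ref{prop303}).

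For the lower bound $\omega^+\ge S$, I would first treat atoms of $\mu^{-1}$. If $y_0\in G_{\mu^{-1}}\cap\,]0,1[$ is such an atom, with jump size $s=g(y_0)-g_-(y_0)>0$, then for all sufficiently large $n$ (so that $ns>1$ and $y_0$ lies strictly inside its unique containing interval $I_{j_n,n}$) a direct evaluation of the infimum defining $\ell^*$ gives
\[
n\ell_{g,I_{j_n,n}}^*=\min\!\bigl(ns/2,\,\mbox{\rm dist}(ny_0,\Z)\bigr)=\mbox{\rm dist}(ny_0,\Z);
\]
taking $\limsup$ and invoking~\eqref{eq307} yields $\omega^+\ge\Omega\circ\iota(y_0)$. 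When $\mu^{-1}$ has a continuous singular part, $G_{\mu^{-1}}$ is perfect and contains irrationals, so $S=\frac12$; the missing piece is to produce $n_k,j_k$ with $n_k\ell_{g,I_{j_k,n_k}}^*\to\frac12$, by exploiting that singularity forces $\mu^{-1}$-a.e.\ point to have infinite upper $\lambda$-density. Specifically, at any such point and for each $\eta>0$, there are arbitrarily large $n$ and an interval $I_{j,n}$ on which the mass $g(b)-g(a)$ exceeds $M/n$ for a preset $M$ while being concentrated in a sub-interval of length $o(1/n)$; the balancing equation $g_-(b-y)-g(a+y)=2y$ then forces $n\ell^*\ge\frac12-\eta$.

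For the upper bound $\omega^+\le S$, I would extract a subsequence with $\omega_{n_k}\to\omega^+$ and argmax intervals $I_{j_{n_k},n_k}=[a_k,b_k]$. From $\ell_{g,I_{j_{n_k},n_k}}^*=\omega_{n_k}/n_k$, the defining inequality of $\ell^*$ forces $g_-(b_k-\omega_{n_k}/n_k)>g(a_k+\omega_{n_k}/n_k)$, so some $y_0^{(k)}\in G_{\mu^{-1}}$ lies in $\,]a_k+\omega_{n_k}/n_k,b_k-\omega_{n_k}/n_k]$; writing $y_0^{(k)}=(j_{n_k}-1+\theta_k)/n_k$ with $\theta_k\in\,]\omega_{n_k},1-\omega_{n_k}]$ yields $\mbox{\rm dist}(n_ky_0^{(k)},\Z)\ge\omega_{n_k}$. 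By compactness, pass to $y_0^{(k)}\to y^*\in G_{\mu^{-1}}$. If $y_0^{(k)}=y^*$ eventually---automatic whenever $y^*$ is isolated in $G_{\mu^{-1}}$, in particular when $G_{\mu^{-1}}$ is finite (which covers the only regime with $S<\frac12$, namely $\mu^{-1}$ finitely atomic on rationals with odd denominators)---then $\omega^+\le\limsup_k\mbox{\rm dist}(n_ky^*,\Z)=\Omega\circ\iota(y^*)\le S$. Otherwise $y^*$ is an accumulation point of $G_{\mu^{-1}}$, so $G_{\mu^{-1}}$ is infinite; a brief density argument (any accumulation point is either irrational, giving $\Omega\circ\iota=\frac12$ at $y^*$, or a rational approached by points with unbounded denominators) then forces $S=\frac12$, whereupon the universal bound $\omega_n\le\frac12$ from Proposition~\ref{prop204}(iii) closes the argument.

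The main obstacle is the continuous singular subcase of the lower bound: converting the analytic concentration of a singular $\mu^{-1}$ (infinite upper $\lambda$-density $\mu^{-1}$-a.e.) into sharp lower estimates on $\ell^*$, whose implicit definition through the balancing equation depends not only on $\mu^{-1}(I_{j,n})$ but also on the positioning of that concentrated mass within $I_{j,n}$. This positional bookkeeping---rather than the arithmetic calculation underpinning~\eqref{eq307}---is where the technical core of the argument resides.
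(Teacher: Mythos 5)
Your overall strategy coincides with the paper's: handle atoms of $\mu^{-1}$ by the exact arithmetic of $\mbox{\rm dist}(ny_0,\Z)$ via (\ref{eq307}), handle the non-atomic singular part via the differentiation theorem for singular measures, and note that the upper bound is only at issue when $G_{\mu^{-1}}$ is finite (the only regime with $S<\frac12$), where it again reduces to that arithmetic. Your upper-bound device --- extracting a growth point $y_0^{(k)}\in[a_k+\ell^*,b_k-\ell^*]$ with $\mbox{\rm dist}(n_ky_0^{(k)},\Z)\ge\omega_{n_k}$ and splitting according to whether $G_{\mu^{-1}}$ is finite --- is sound and in fact slightly more systematic than the paper's direct computation of $\omega_n$ in the finitely-atomic case; and for atoms only the inequality $n\ell^*_{g,I_{j_n,n}}\ge\min(ns/2,\mbox{\rm dist}(ny_0,\Z))$ is needed, which is what your computation actually delivers (the asserted equality can fail if $I_{j_n,n}$ carries additional mass, but this is harmless).

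The one genuine gap is precisely the step you flag as unresolved: the positional bookkeeping in the continuous-singular lower bound. It closes with the same tool you already use for atoms, applied at a well-chosen point. The non-atomic singular part charges no countable set, so $\mu^{-1}$-almost every point $x$ of its support is irrational (hence $\iota(x)=+\infty$) \emph{and} has infinite symmetric derivative by \cite[Thm.7.15]{Rudin}; fix such an $x$ and any $b>3$. Infinite density gives, for all large $n$, $g_-(x+\frac{1}{bn})-g(x-\frac{1}{bn})\ge\frac1n$, i.e.\ mass at least $\frac1n$ within $\frac1{bn}$ of $x$. The positioning is then supplied by (\ref{eq307}) applied to $x$ itself: since $\iota(x)=+\infty$, for infinitely many $n$ one has $\mbox{\rm dist}(nx,\Z)>\frac12-\eta$, which places $x$ at distance at least $(\frac12-\eta)/n$ from both endpoints of its cell $I_{j_n,n}=[a,b]$. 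Hence $[x-\frac1{bn},x+\frac1{bn}]\subset[a+y,b-y]$ for every $y\le y_n:=(\mbox{\rm dist}(nx,\Z)-\frac1b)/n$, so $g_-(b-y)-g(a+y)\ge\frac1n\ge 2y_n>2y$ for $y<y_n$, the balancing condition fails, and $\omega_n\ge ny_n>\frac12-\eta-\frac1b$. Letting $\eta\downarrow0$ and $b\to\infty$ yields $\omega^+=\frac12=S$. So the ``technical core'' does not require tracking where the concentrated mass sits inside a generic cell; it suffices to combine the differentiation theorem with the equidistribution fact (\ref{eq307}) at a single point having both properties, which is exactly how the paper's proof proceeds.
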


\begin{proof}
Using the same symbols as in previous proofs, write $G_{\mu^{-1}}$
simply as $G$, and let $2m = \sup \iota (G )$ with the
appropriate $m\in \N_0 \cup \{+\infty\}$; also, let $G^*$
be the set of atoms of $\mu^{-1}$, i.e., $G^* = \bigl\{ 0<x<1 :
g_-(x) < g(x) \bigr\}$. Assume first that $m\in \N_0$. Since $m=0$ implies
$G = \{0,1 \}$, or equivalently $\mu^{-1} = 0$, and (\ref{eq3081})
is correct in this case by Proposition \ref{prop303}, henceforth
assume $m\ge 1$. Then $\mu^{-1}$ is concentrated on
finitely many atoms, thus
$$
G^* = G = \left\{ 
0 , \frac{k_1}{2m_1 + 1} , \ldots , \frac{k_l}{2m_l + 1} , 1
\right\} \, ,
$$
with the appropriate positive integers $l, k_1, \ldots, k_l , m_1,
\ldots, m_l$, where $k_i, 2m_i + 1$ are coprime for all $i$, and
$\max_{i=1}^l m_i = m$. As seen in the example above, for all sufficiently
large $n$,
$$
\omega_n = \max\nolimits_{i=1}^l \mbox{\rm dist} \left(
  \frac{nk_i}{2m_i + 1}, \Z\right) \, ,
$$
and hence
$$
\omega^+ = \max\nolimits_{i=1}^l \Omega \circ \iota \left(
    \frac{k_i}{2m_i + 1}\right) = \max \nolimits_{i=1}^l \Omega (2m_i)
  = \Omega (2m) \, ,
$$
so again (\ref{eq3081}) is correct. It remains to consider the case of
$m=+\infty$. Here it is convenient to consider two subcases, depending
on whether $\iota (G^*)$ is unbounded or not. In the former
case, fix $a\in \R^+$, and pick $x\in G^* $ with $\iota (x)
\ge a$. Moreover, pick $b> 3$, and recall that $y_n := (\mbox{\rm
  dist} (nx, \Z) - 1/b)/n >0$ for infinitely many $n\in \N$. Since $x$
is an atom of $\mu^{-1}$, for every $c\in \R^+$ clearly
\begin{equation}\label{eq3082}
g_- \left( x + \frac1{b n}\right) - g\left( x - \frac1{b n}\right) \ge
\frac{c}{b n} \quad \mbox{\rm for all sufficiently large $n$.}
\end{equation}
Choosing $c= b/\epsilon $ in (\ref{eq3082}), note that for infinitely many
$n$,
\begin{align*}
g_- & \left( \frac{\lfloor nx \rfloor + 1 - n y_n}{n}\right)  - g \left(
  \frac{\lfloor nx \rfloor  + n y_n}{n}\right) = \\
& = g_- \left( 
x + \frac{\max \{1 - 2 \mbox{\rm dist}(nx,\Z), 0\}}{n} + \frac1{b n}
\right) -
g \left( 
x + \frac{\min\{1 - 2 \mbox{\rm dist}(nx,\Z), 0\}}{n} - \frac1{b n}
\right) \\
& \ge g_- \left( x + \frac{1}{b n} \right) - g \left( x -
  \frac1{b n}\right) \ge \frac1{n\epsilon } \ge \frac{2y_n}{\epsilon } \, ,
\end{align*}
and consequently $\omega_n \ge n y_n$. It follows that
$$
\omega^+ \ge \limsup\nolimits_{n\to \infty} \left( \mbox{\rm dist}
  (nx, \Z) - \frac1{b}\right) = \Omega \circ \iota (x)  -
\frac1{b} \ge \Omega (a) - \frac1{b} \, .
$$
Since $a, b> 3$ have been arbitrary, $\omega^+ = \frac12 = \Omega
(2m)$. Finally, assume that $\iota (G^*)$ is bounded, and
hence $G^*$ is finite, possibly empty. Since $m= +\infty$,
clearly $G\setminus G^* \ne \varnothing$, and
every $x \in G\setminus G^* $ is a continuity
point of $g$, as well as an accumulation point of $G$. By
\cite[Thm.7.15]{Rudin},
$$
\lim\nolimits_{\varepsilon \downarrow 0} \frac{g_- (x + \varepsilon) -
  g(x- \varepsilon)}{2\varepsilon} = +\infty \quad \mbox{\rm for
  $\mu^{-1}$-almost every $0<x<1$} \, .
$$
From this it is clear that, given any $b, c>3$, there
exists $x \in G\setminus \Q$ for which (\ref{eq3082})
holds. With $\iota(x) = a = +\infty$, the same argument as before
shows that $\omega^+ = \frac12$, i.e., (\ref{eq3081}) is correct in this
case also.
\end{proof}

Combining Theorem \ref{thm304} and Lemma \ref{lem308} yields a sharp
(upper) rate for $(d_{\epsilon } \bigl(\mu, \delta_{\bullet}^{u_n})\bigr)$,
for arbitrary $\mu \in \cP$. To formulate the result, recall
that every $\sigma$-finite Borel measure $\rho$ on the real line can
be written uniquely as $\rho = \rho_{\sf A} + \rho_{\sf S}$, where
$\rho_{\sf A}$ and $\rho_{\sf S}$ are absolutely
continuous and singular (w.r.t.\ $\lambda$), respectively.

\begin{theorem}\label{thm309}
Let $\mu\in \cP$ and $\epsilon >0$. Then
\begin{equation}\label{eq3091}
\limsup\nolimits_{n\to \infty} n d_{\epsilon } (\mu, \delta_{\bullet}^{u_n})
= \max \left\{
\left\| \, \, \Omega \! \left( \epsilon \frac{{\rm d}(\mu^{-1})_{\sf A}}{{\rm
        d}\lambda}\right) \right\|_{\infty} \!\! , \,  \sup \Omega
\circ \iota (G_{(\mu^{-1})_{\sf S}}) 
\right\} \, .
\end{equation}
\end{theorem}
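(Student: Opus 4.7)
The plan is to combine the techniques of Theorem \ref{thm304} and Lemma \ref{lem308} via the Lebesgue decomposition $\mu^{-1} = (\mu^{-1})_{\sf A} + (\mu^{-1})_{\sf S}$. As in earlier proofs, by (\ref{eq206}) it suffices to treat $\epsilon = 1$. Write $g = F_\mu^{-1}$, $\omega_n = n d_1(\mu, \delta_\bullet^{u_n})$, $\omega^\pm = \limsup/\liminf_{n\to\infty}\omega_n$; set $h = {\rm d}(\mu^{-1})_{\sf A}/{\rm d}\lambda$ and $S = G_{(\mu^{-1})_{\sf S}}$, and let $c$ denote the right-hand side of (\ref{eq3091}).

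For the lower bound $\omega^+ \ge c$, I would establish the two relevant inequalities independently. First, $\omega^- \ge \|\Omega(h)\|_\infty$: the ``nicely shrinking intervals'' argument of Theorem \ref{thm304} applies verbatim, since the monotone function $g$ is differentiable $\lambda$-almost everywhere with $g' = h$ by Lebesgue's theorem on differentiation of monotone functions. Second, $\omega^+ \ge \sup \Omega \circ \iota(S)$: the constructions in the proof of Lemma \ref{lem308} (selection of atoms of $(\mu^{-1})_{\sf S}$ of high $\iota$-value, or, when none of those exist, the application of \cite[Thm.7.15]{Rudin} to the singular factor) are entirely local and insensitive to the addition of an absolutely continuous component to $\mu^{-1}$, so carry over unchanged.

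For the upper bound $\omega^+ \le c$, suppose for contradiction that $\omega^+ > c$, and fix $\alpha \in (c, \omega^+)$. Along a subsequence $(n_k)$ with $\omega_{n_k} \to \omega^+$ and $\omega_{n_k} > \alpha$, the equality case in (\ref{eq306}) --- read with the fraction interpreted as the density ratio $\mu^{-1}(J_{n,j})/\lambda(J_{n,j})$, which requires no absolute continuity --- produces indices $j_k$ satisfying
$$
\frac{\mu^{-1}(J_{n_k,j_k})}{\lambda(J_{n_k,j_k})} \,\ge\, \frac{2\alpha}{1 - 2\alpha} \, .
$$
After extracting a further subsequence, $J_{n_k,j_k}$ converges to some $x^* \in [0,1]$. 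If $x^* \notin S$, pick an open neighbourhood $U$ of $x^*$ with $(\mu^{-1})_{\sf S}(U) = 0$; then $J_{n_k,j_k} \subset U$ for all large $k$, so $\mu^{-1}(J_{n_k,j_k}) = \int_{J_{n_k,j_k}} h \, {\rm d}\lambda \le \|h\|_\infty \lambda(J_{n_k,j_k})$, forcing $\omega^+ \le \|\Omega(h)\|_\infty \le c$, a contradiction. If $x^* \in S$, then splitting $\mu^{-1}(J_{n_k,j_k}) = (\mu^{-1})_{\sf A}(J_{n_k,j_k}) + (\mu^{-1})_{\sf S}(J_{n_k,j_k})$ leaves at most $\|h\|_\infty \lambda(J_{n_k,j_k})$ to the first summand, so all remaining excess must come from $(\mu^{-1})_{\sf S}$ near $x^*$; the arithmetic analysis behind (\ref{eq307}), applied locally near $x^*$ exactly as in the proof of Lemma \ref{lem308}, then forces $\omega^+ \le \Omega \circ \iota(x^*) \le c$, again a contradiction.

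The main obstacle is the case $x^* \in S$ in the upper bound: one must cleanly separate the absolutely continuous and singular contributions to the density ratio on the same subinterval $J_{n_k,j_k}$ so that the arithmetic argument of Lemma \ref{lem308} produces the sharp singular bound $\Omega \circ \iota(x^*)$. This decoupling is not automatic because an interval of length $\sim 1/n_k$ may simultaneously absorb absolutely continuous mass and several small singular atoms; the boundary case $\omega^+ = \tfrac12$ (where $\Omega^{-1}(\omega_{n_k}) \to +\infty$) needs particular care, but is compatible with $x^* \in S$ via the same Rudin-type differentiation argument used in the final subcase of Lemma \ref{lem308}.
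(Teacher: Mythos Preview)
Your lower bound is fine and essentially matches the paper's argument. The upper bound, however, has a genuine gap in the case $x^*\in S$. You assert that ``the arithmetic analysis behind (\ref{eq307}) \ldots\ forces $\omega^+\le\Omega\circ\iota(x^*)$'', but nothing in Lemma \ref{lem308} proves an \emph{upper} bound locally near a single point; the upper bound there comes from a global exact formula $\omega_n=\max_i\mathrm{dist}(nx_i,\Z)$ that is no longer valid once an absolutely continuous part is present. What is missing is the key structural observation that the paper exploits: since you may assume $c<\tfrac12$ (otherwise there is nothing to prove), the condition $\sup\Omega\circ\iota(S)<\tfrac12$ forces $\sup\iota(S)<\infty$, and hence $S$ is \emph{finite}, consisting of rationals with bounded odd denominators. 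With this in hand your contradiction argument does close, but not the way you describe: the uniform bound $\mathrm{dist}(n x^*,\Z)\le\Omega\circ\iota(x^*)\le c<\alpha<\omega_{n_k}$, valid for \emph{every} $n$, shows that the atom $x^*$ always lies in the excluded strip $[(j-1)/n_k,j/n_k]\setminus J_{n_k,j}$, so contributes nothing to $(\mu^{-1})_{\sf S}(J_{n_k,j_k})$; finiteness of $S$ then eliminates all other singular mass from $J_{n_k,j_k}$ for large $k$, and you are back to the case $x^*\notin S$.

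Once the finiteness of $S$ is noted, your worries in the final paragraph evaporate: there can be no ``several small singular atoms'' in a single $J_{n_k,j_k}$, the Rudin-type differentiation argument (which in Lemma \ref{lem308} yields a \emph{lower} bound) is irrelevant to the upper bound, and the case $\omega^+=\tfrac12$ needs no special treatment. The paper's proof is in fact more direct than your contradiction route: having observed that $S$ is finite, it simply picks $z\in(c,\tfrac12)$ and verifies $\omega_n\le z$ for all large $n$ by checking that the defining inequality $g_-\bigl((j-z)/n\bigr)-g\bigl((j-1+z)/n\bigr)\le 2z/n$ holds for every $j$, splitting $g=g_{\sf A}+g_{\sf S}$ and handling the two pieces separately.
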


\begin{proof}
Since there is nothing to prove otherwise, assume that
$(\mu^{-1})_{\sf A}\ne 0$ and $(\mu^{-1})_{\sf S}\ne 0$. In analogy to
the proof of Lemma \ref{lem308}, let $g= F_{\mu}^{-1} = g_{\sf A} +
g_{\sf S}$, with $g_{\sf A}, g_{\sf S} \in \cF$ such that
$\lambda_{g_{\sf A}} = (\mu^{-1})_{\sf A}$ and $\lambda_{g_{\sf S}} =
(\mu^{-1})_{\sf S}$, as well as $2m = \sup \iota (G_{\sf S} )\in \N_0
\cup \{+\infty\}$, where $G_{\sf S} = G_{(\mu^{-1})_{\sf S}}$ for convenience. Since $g_- (y) - g(x) \ge g_{{\sf
    S}-} (y) - g_{\sf S} (x)$ for all $0<x < y < 1$, Lemma
\ref{lem308} yields $\omega^+ \ge \sup \Omega \circ \iota
(G_{\sf S}) = \Omega (2m)$. Thus (\ref{eq3091}) clearly is
correct when $m = +\infty$, and only the case of $m\in \N$ remains
to be considered. (Note that $m=0$ is impossible, as it would imply
$(\mu^{-1})_{\sf S} =0$.) In this case, $G_{\sf S}$ is finite,
say, $G_{\sf S} = \{0,x_1, \ldots, x_l, 1\}$ with $l\in
\N$ and $0<x_1 < \ldots < x_l < 1$. With $J_{n,j}$ as in the proof of Theorem
\ref{thm304}, and letting $K_n = \{ \lfloor n x_i \rfloor : i = 1, \ldots ,
l \}\subset \{ 1, \ldots , n -1 \}$ for $ n\ge 1/x_1$, observe that
$$
g_{\sf A} \left( \frac{j - \omega_n}{n}\right) - g_{\sf A} \left(
  \frac{j-1 + \omega_n}{n}\right) = 
g_- \left( \frac{j - \omega_n}{n}\right) - g  \left(
  \frac{j-1 + \omega_n}{n}\right) \le \frac{2\omega_n }{n\epsilon } \quad
\forall j \not \in K_n \, ,
$$
and consequently
$$
\frac1{\lambda (J_{n,j})} \int_{J_{n,j}} \!\! \epsilon  g_{\sf A}' \, {\rm d}\lambda
\le \frac{2\omega_n }{ 1 - 2 \omega_n} \quad \forall j \not \in
K_n \, .
$$
If $\omega^- < \frac12$ then the same argument as in the proof of
Theorem \ref{thm304} shows that
$$
\epsilon  g_{\sf A}'(x) \le \frac{2\omega^-}{ 1 - 2\omega^-} \quad
\mbox{\rm for $\lambda$-almost every $x$} \, ,
$$
since clearly $j_n(x)\not \in K_n$ whenever $x\not \in G_{\sf S}$ and
$n$ is sufficiently large. Thus $\omega^- \ge \|\Omega  (\epsilon g_{\sf A}')\|_{\infty}$;
trivially, the latter also holds when $\omega^- = \frac12$. In
summary, $\omega^+ \ge \max \bigl\{ \| \Omega  (\epsilon g_{\sf
  A}')\|_{\infty} , \Omega(2m) \bigr\}=: \omega$; note that $\omega$
simply equals the right-hand side in (\ref{eq3091}).

The reverse inequality is non-trivial only when $\omega < \frac12$. In
this case, assume $m\in \N$ as before, and pick
any $z$ with $\omega < z < \frac12$. Then, for all sufficiently large
$n$,
$$
g_{\sf A} \left( \frac{j-z}{n} \right) - g_{\sf A}\left( \frac{j-1 +
    z}{n} \right) \le \frac{2z}{\epsilon  n} \quad \forall j = 1 , \ldots ,
n \, ,
$$
but also, since $G_{\sf S}$ is finite,
$$
g_{{\sf S}-} \left( \frac{j-z}{n} \right) - g_{\sf S}\left( \frac{j-1 +
   z}{n} \right) = 0 \quad \forall j = 1 , \ldots ,
n \, .
$$
Thus $\omega_n \le z$ for all sufficiently large $n$, and since $z>\omega$
was arbitrary, $\omega^+ \le \omega $. 
\end{proof}

\begin{cor}\label{cor310}
Let $\mu\in \cP$ and $\epsilon  >0$.
\begin{enumerate}
\item If $\limsup_{n\to \infty} nd_{\epsilon } (\mu,
  \delta_{\bullet}^{u_n}) < \frac13$ then $\bigl( nd_{\epsilon } (\mu,
  \delta_{\bullet}^{u_n})\bigr)$ converges, and $\mu^{-1}$ is
  absolutely continuous (w.r.t.\ $\lambda$).
\item If $\limsup_{n\to \infty} nd_{\epsilon } (\mu,
  \delta_{\bullet}^{u_n}) \ge \frac13$ then either $\bigl( nd_{\epsilon } (\mu,
  \delta_{\bullet}^{u_n})\bigr)$ converges, or 
$$
\limsup\nolimits_{n\to \infty} n d_{\epsilon } (\mu,
\delta_{\bullet}^{u_n}) \: \in  \: 
\left\{ \frac13 , \frac25 , \frac37 , \ldots , \frac12 \right\} =
\Bigl\{ \Omega  (2m) : m\in \N \cup \{+\infty\} \Bigr\} \, .
$$
\end{enumerate}
\end{cor}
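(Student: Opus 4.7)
My plan is to read both parts directly off Theorem \ref{thm309}, using two simple observations. Writing $\omega^{\pm}$ as in the preceding proofs, and setting $A := \|\Omega(\epsilon \, {\rm d}(\mu^{-1})_{\sf A}/{\rm d}\lambda)\|_{\infty}$ and $B := \sup \Omega \circ \iota(G_{(\mu^{-1})_{\sf S}})$, Theorem \ref{thm309} identifies $\omega^+ = \max\{A,B\}$. The first observation I will record is that, directly from its definition, $\iota$ takes values in $2\N_0 \cup \{+\infty\}$, and hence
$$
B \: \in \: \bigl\{\Omega(2m) : m \in \N_0 \cup \{+\infty\}\bigr\} \: = \: \bigl\{0, \tfrac13, \tfrac25, \tfrac37, \ldots , \tfrac12\bigr\} \, ;
$$
in particular, $B<\tfrac13$ forces $B = 0$. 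The second observation I will extract is the lower bound $\omega^- \ge A$; this is already proved inside the proof of Theorem \ref{thm309}, where the argument imported from Theorem \ref{thm304} is applied to $g_{\sf A}$ on the intervals $J_{n,j}$ whose index lies outside the (finite) exceptional set $K_n$.

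With these two facts in hand, part (i) should follow immediately. If $\omega^+ < \tfrac13$, then $B < \tfrac13$, hence $B = 0$, hence $\iota(x) = 0$, i.e., $x \in \Z$, for every $x \in G_{(\mu^{-1})_{\sf S}}$. Combined with $G_{(\mu^{-1})_{\sf S}} \subset [0,1]$, this forces $G_{(\mu^{-1})_{\sf S}} \subset \{0,1\}$. Since $\mu^{-1} (\R) = \mu^{-1}(]0,1[)$, the singular part $(\mu^{-1})_{\sf S}$ puts no mass on $\{0,1\}$, so $(\mu^{-1})_{\sf S} = 0$ and $\mu^{-1}$ is absolutely continuous. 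Theorem \ref{thm304} then yields convergence of $(nd_{\epsilon}(\mu, \delta_{\bullet}^{u_n}))$.

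For part (ii), I will split into cases depending on which term realizes the maximum in $\omega^+ = \max\{A,B\}$. If $A \ge B$, then $\omega^+ = A$, and combining with $\omega^- \ge A$ gives $\omega^- = \omega^+$, i.e., the sequence converges. Otherwise $A < B$, so $\omega^+ = B$; since $\omega^+ \ge \tfrac13$, the first observation places $\omega^+$ in $\{\tfrac13, \tfrac25, \tfrac37, \ldots, \tfrac12\}$, as required.

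I do not anticipate any serious obstacle, as the heavy lifting is already in Theorem \ref{thm309}. The only slightly delicate step is the measure-theoretic remark that $G_{(\mu^{-1})_{\sf S}} \subset \{0,1\}$ forces $(\mu^{-1})_{\sf S} = 0$, which rests on the fact that $\mu^{-1}$ has no mass at the endpoints of $[0,1]$.
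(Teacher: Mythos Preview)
Your proposal is correct and matches the paper's intended derivation: the paper states the result as an immediate corollary of Theorem~\ref{thm309} (together with the lower bound $\omega^-\ge A$ recorded in Remark~\ref{rem3105}(i)) without giving a separate proof, and your argument is exactly the natural unpacking of that corollary. The one point you might phrase more carefully is that $B=\sup\Omega\circ\iota(G_{(\mu^{-1})_{\sf S}})$ lies in the closed set $\{0,\tfrac13,\tfrac25,\ldots,\tfrac12\}$ because that set contains all its accumulation points (or equals $-\infty$ when $(\mu^{-1})_{\sf S}=0$, in which case part (i) is immediate from Theorem~\ref{thm304}); everything else is in order.
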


\begin{rem}\label{rem3105}
(i) The proof given above shows that, for every $\mu \in \cP$ and
$\epsilon > 0$,
$$
\liminf\nolimits_{n\to \infty} n d_{\epsilon } (\mu, \delta_{\bullet}^{u_n})
\ge 
\left\| \, \, \Omega \! \left( \epsilon \frac{{\rm d}(\mu^{-1})_{\sf A}}{{\rm
        d}\lambda}\right) \right\|_{\infty}  .
$$
(ii) Let $\mu\in \cP$ be non-atomic. Then the right-hand side in
(\ref{eq3091}) tends to $\frac12$ as $\epsilon \to +\infty$. This is
consistent with the fact that $n \min_{x\in \Xi_n} \|F_{\mu} -
F_{\delta_{x_n}^{u_n}}\|_{\infty} = \frac12$ for all $n\in \N$
whenever $\mu$ is non-atomic \cite[Cor.5.5]{BX}.
\end{rem}

The following example illustrates the results of this section. In
particular, it demonstrates that all situations allowed by Theorem
\ref{thm309} and Corollary \ref{cor310} do occur. It also shows that
(\ref{eq305}) may fail when $\mu^{-1}$
is not absolutely continuous; similarly, (\ref{eq3081}) may fail when
$\mu^{-1}$ is not singular.

\begin{example}\label{ex311}
For every $0\le a<1< b$ consider $\mu_{a,b} = a\delta_{-1} + (1-a)
U_{1,b}$, where $U_{1,b}$ denotes the uniform distribution (normalized
Lebesgue measure) on $[1,b]$. Note that $\mu_{a,1}:= \lim_{b\downarrow
1} \mu_{a,b} = \mu_a$, with $\mu_a$ considered prior to Lemma
\ref{lem308}. Since $(\mu^{-1}_{a,b})_{\sf A} = (b-1) U_{a,1}$ and
$(\mu^{-1}_{a,b})_{\sf S} = 2 \delta_a$ provided that $a>0$, Theorem \ref{thm309}
yields
$$
\limsup\nolimits_{n\to \infty} n d_{\epsilon } (\mu_{a,b},
\delta_{\bullet}^{u_n}) = \max \left\{ \Omega  \left(\epsilon 
    \frac{b-1}{1-a}\right) , \Omega \circ \iota (a) \right\}
\, ,
$$
whereas by direct inspection,
$$
\liminf\nolimits_{n\to \infty} n d_{\epsilon } (\mu_{a,b},
\delta_{\bullet}^{u_n}) = \Omega  \left(\epsilon
    \frac{b-1}{1-a}\right)  \: <  \: \frac12 \, .  
$$
On the one hand, if $a = a_m = m/(2m+1)$ for some $m\in \N_0$ then
$\Omega \circ \iota (a_m) = a_m$, and since $b \mapsto \Omega 
\bigl(\epsilon (b-1)/(1-a_m)\bigr)$ is increasing continuously from $0$ to
$\frac12$, there exists a unique $b_m$ with $ \Omega 
\bigl( \epsilon (b_m-1)/(1-a_m)\bigr)= a_m$. Thus $\bigl( n d_{\epsilon }
(\mu_{a_m,b} , \delta_{\bullet}^{u_n})\bigr)$ converges precisely if
$b\ge b_m$, whereas for $b< b_m$ the lim inf can have any value
between $0$ and $a_m$. On the other hand, if $a=\frac12$ then $\Omega \circ \iota
(a) =\frac12$, and again $\liminf_{n\to \infty} n d_{\epsilon }
(\mu_{1/2,b} , \delta_{\bullet}^{u_n}) = \Omega  \bigl( 2\epsilon  ( b-1)\bigr)$ can
have any value between $0$ and $\frac12$. Except for the case of $\lim_{n\to
  \infty} n d_{\epsilon } (\mu, \delta_{\bullet}^{u_n}) = \frac12$, which
occurs, e.g., for the exponential distributions in Example \ref{ex212}, 
every possible situation allowed by Theorem \ref{thm309} can be
observed in this example by choosing $a,b$ appropriately; see also
Figure \ref{fig1}. Notice that $\mu^{-1}_{a,b}$ is absolutely
continuous precisely if $a=0$, and is singular only if
$b=1$. While Theorem \ref{thm304} and Lemma
\ref{lem308} thus cannot in general be reversed, clearly (\ref{eq305})
and (\ref{eq3081}) may fail if $\mu^{-1}$ is not absolutely continuous and
singular, respectively.
\end{example}

\begin{figure}[ht]
\psfrag{tli}[]{\small $\liminf_{n\to\infty} nd_{\epsilon } (\mu ,
  \delta_{\bullet}^{u_n})$}
\psfrag{tls}[]{\small $\limsup_{n\to\infty} nd_{\epsilon } (\mu ,
  \delta_{\bullet}^{u_n})$}
\psfrag{t0}[]{\small $0$}
\psfrag{t05}[]{\small $\frac12$}
\psfrag{t13}[]{\small $\frac13$}
\psfrag{t25}[]{\small $\frac25$}
\psfrag{t37}[]{\small $\frac37$}
\psfrag{t12}[]{\small $\frac12$}
\psfrag{tdot}[]{\small $\cdots$}
\psfrag{tinf0}[l]{\small $\mu^{-1} = 0$}
\psfrag{tinfd}[]{\small $(\mu^{-1})_{\sf S}=0$}
%
%
\begin{center}
\includegraphics{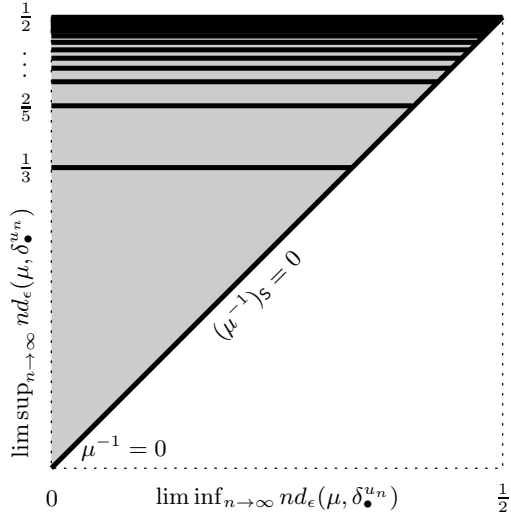}
\end{center}
\caption{Solid black lines indicate, for any $\mu \in \cP$, the possible values
of the limit inferior and the limit superior of $\bigl( n d_{\epsilon } (\mu,
\delta_{\bullet}^{u_n})\bigr)$; see Theorems \ref{thm304}
and \ref{thm309}. Example \ref{ex311} demonstrates that all possible
values may indeed occur.}\label{fig1}
\end{figure}

\begin{example}\label{ex3112}
Let $\mu$ be a normal distribution with
variance $\sigma^2>0$. With $\phi$ denoting the standard normal
distribution function, it is readily deduced from Proposition
\ref{prop209} that $\ell_{\bullet}^{u_n} = d_{\epsilon} (\mu,
\delta_{\bullet}^{u_n})$ is the unique solution of $\phi^{-1} (1-\ell)
- \phi^{-1} (1+\ell - 1/n) = 2\ell / \sqrt{\epsilon^2 \sigma^2}$. Utilizing the
familiar fact \cite[Sec.26.2]{AS}
$$
1 - \phi(x) = \frac{e^{-x^2/2}}{\sqrt{2\pi }} \left( x^{-1} - x^{-3} +
\cO (x^{-5})\right) \quad \mbox{\rm as } x \to +\infty \, ,
$$
a straightforward analysis of this equation yields
\begin{equation}\label{eq3115}
n d_{\epsilon} (\mu, \delta_{\bullet}^{u_n}) = \frac12 -
\frac{1}{2\epsilon \sqrt{2\sigma^2} } \cdot \frac{\sqrt{\log n}}{n} +
\cO (n^{-1}) \quad \mbox{\rm as } n \to \infty \, ,
\end{equation}
which sharpens (\ref{eq305}). Notice that convergence occurs here at a slightly slower rate than has so far
been observed in this article for examples of absolutely continuous $\mu^{-1}$; cf.\ \cite{GHMR} and \cite[Ex.5.18]{XB}.
\end{example}

As Examples \ref{ex212}, \ref{ex216}, and \ref{ex3112} suggest,
the results of this section, notably Theorem \ref{thm304}, can be refined by imposing
further assumptions on $\mu$. For instance, assume that $g =
F_{\mu}^{-1}$ is $C^2$ on $]0,1[$, and that $g,g'\ne 0$ both are convex, with
$\limsup_{x\uparrow 1} (1-x) g''(x)/g'(x)<+\infty$. (All mentioned
examples meet these requirements; for normal distributions, only the
right half of $]0,1[$ has to be considered due to symmetry.) Letting $c =
\|\Omega  (\epsilon g')\|_{\infty}>0$ for convenience, it is straightforward
to show that, as a refinement of (\ref{eq305}),
\begin{equation}\label{eq313}
n d_{\epsilon } (\mu, \delta_{\bullet}^{u_n}) = c - \frac{2c^2}{\epsilon }
e_n   + o(e_n)  \quad \mbox{\rm as $n\to \infty$,}
\end{equation}
where $\lim_{n\to \infty} e_n = 0$, and more specifically,
$$
e_n = \left\{ \begin{array}{ll}
\displaystyle \frac1{g_-'(1)} + n  \frac{g(  1 - (1-c)/n) - g(1-c/n)}{(1-2c)
  g_-'(1)^2}  & \mbox{\rm if } c < \frac12 \, , \\[4mm]
\displaystyle \frac1{g' \bigl(1 - 1/(2n)\bigr)} & \mbox{\rm if } c =
\frac12 \, ;
\end{array}
\right.
$$
in particular, if $g_-''(1)< + \infty$ then simply
$$
e_n = \frac{g_-''(1)}{2 g_-'(1)^2} \, n^{-1} \, .
$$
As the reader may want to check, for exponential, Benford, and normal
distributions, the asymptotic equalities (\ref{eq214}), (\ref{eq217}), and (\ref{eq3115}),
respectively, all are (slightly sharper than, but certainly)
consistent with (\ref{eq313}).

\begin{example}\label{ex320}
Let $\mu$ be the {\em Cantor distribution}, i.e., the $\log 2/ \log
3$-dimensional Hausdorff measure on the classical Cantor middle-thirds
set $C$. Thus $G_{\mu} = C$, and since $\mbox{\rm diam}\, C = 1$, the
measure $\mu^{-1}$, referred to as the {\em inverse Cantor
  distribution\/} \cite[Ex.A.11]{BL}, is a probability measure as well. Both
$\mu, \mu^{-1}$ are singular: While $\mu$ is non-atomic, $\mu^{-1}$ is purely atomic; in fact, $\mu^{-1}(\{ i
2^{-m}\}) = 3^{-m}$ for every $m\in \N$ and every odd $1\le i <
2^m$. Obviously, Lemma \ref{lem308} applies to both distributions,
showing that
$$
\limsup\nolimits_{n\to \infty} n d_{\epsilon} ( \mu,
\delta^{u_n}_{\bullet}) =
\limsup\nolimits_{n\to \infty} n d_{\epsilon} ( \mu^{-1},
\delta^{u_n}_{\bullet}) =  \frac12 \, .
$$
By contrast, it is straightforward to check that $\liminf_{n\to
  \infty} n d_{\epsilon} ( \mu,
\delta^{u_n}_{\bullet}) = 0$. For the inverse Cantor distribution, an
elementary analysis \cite{BX, XThesis} yields $\frac1{216} \le \liminf_{n\to
  \infty} n d_1 ( \mu^{-1}, \delta^{u_n}_{\bullet}) \le \frac13$, but the authors do not know the
precise value of $\liminf_{n\to
  \infty} n d_{\epsilon} ( \mu^{-1},
\delta^{u_n}_{\bullet})$ for any $\epsilon > 0$.
\end{example}

\section{Best (unconstrained) L\'evy approximations}\label{sec4}

This section studies the asymptotics of $d_{\epsilon} (\mu,
\delta_{\bullet}^{\bullet, n})$ as $n\to \infty$. The following theorem is the
section's main result and a counterpart to Theorems \ref{thm304} and
\ref{thm309}. It asserts that the sequence $\bigl( n d_{\epsilon} (\mu,
\delta_{\bullet}^{\bullet, n})\bigr)$ always converges, to a limit
smaller than $\frac12$ that is easily expressed in terms of $\Omega$
and $(\mu^{-1})_{\sf A}$, the absolutely continuous part of
$\mu^{-1}$.

\begin{theorem}\label{thm450}
Let $\mu \in \cP$ and $\epsilon > 0$. Then
\begin{equation}\label{eq4501}
\lim\nolimits_{n\to \infty} n d_{\epsilon} (\mu,
\delta_{\bullet}^{\bullet, n}) = \int \Omega \! \left( \epsilon 
  \frac{{\rm d} (\mu^{-1})_{\sf A}}{{\rm d} \lambda}\right) {\rm d}
\lambda  \, .
\end{equation}
\end{theorem}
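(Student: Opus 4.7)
The plan is to establish (\ref{eq4501}) by matching asymptotic upper and lower bounds for $n\ell_{\bullet}^{\bullet,n} = nd_\epsilon(\mu,\delta_{\bullet}^{\bullet,n})$, extending the style of Section \ref{sec3} to the setting where both atom locations $x\in\Xi_n$ and weights $p\in\Pi_n$ are optimized. Throughout I would reduce to $\epsilon=1$ via the scaling identity (\ref{eq206}), and write $g = F_\mu^{-1}$ with the decomposition $g = g_{\sf A} + g_{\sf S}$ where $\lambda_{g_{\sf A}} = (\mu^{-1})_{\sf A}$ and $\lambda_{g_{\sf S}} = (\mu^{-1})_{\sf S}$. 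Set $L := \int \Omega(g_{\sf A}'(x))\,{\rm d}x \in [0,\tfrac12]$. By Proposition \ref{prop209}(iii), the task reduces to showing that $\inf_{p\in\Pi_n}\max_{j=1}^n \ell^*_{g,[P_{j-1},P_j]} = L/n + o(1/n)$, where the infimum runs over partitions $0 = P_0 \le \ldots \le P_n = 1$ of $[0,1]$.

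For the upper bound $\limsup n\ell_{\bullet}^{\bullet,n} \le L$, I would construct specific partitions by \emph{$\Omega$-equipartition}: letting $H(x) = \int_0^x \Omega(g_{\sf A}')\,{\rm d}\lambda$ be the continuous non-decreasing function with $H(1) = L$, choose $P_j^{(n)}$ such that $H(P_j^{(n)}) = jL/n$, additionally requiring that all sufficiently massive atoms of $\mu^{-1}$ lie among the $P_j^{(n)}$'s. (By Proposition \ref{prop204}(iii), an interior jump of $g$ inside $I_j$ forces $\ell^*_j$ to be at least half the jump size, whereas jumps at endpoints contribute $0$.) On each resulting interval $I_j = [P_{j-1}^{(n)}, P_j^{(n)}]$, the identity $\ell^*_{g,I} = \Omega(\bar h^-)\,w$ for the average slope $\bar h^-$ on a shrunken sub-interval, combined with Jensen's inequality for the concave function $\Omega$, yields $\ell^*_j \le L/n + o(1/n)$ uniformly in $j$.

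For the lower bound $\liminf n\ell_{\bullet}^{\bullet,n} \ge L$, the starting point is that for any partition and associated $\ell^*_j$, continuity of $g$ on $I_j$ gives $\ell^*_j = \Omega(\bar h_j^-)\,w_j$ with $w_j = P_j - P_{j-1}$ and $\bar h_j^- = \bigl(g(P_j - \ell^*_j) - g(P_{j-1}+\ell^*_j)\bigr)/(w_j - 2\ell^*_j)$. Concavity of $\Omega$ on $[0,\infty)$ and Jensen applied to $g_{\sf A}'$ over the shrunken sub-interval yield
\[
\ell^*_j \Bigl(1 - \tfrac{2\ell^*_j}{w_j}\Bigr) \ge \int_{P_{j-1}+\ell^*_j}^{P_j - \ell^*_j}\Omega(g_{\sf A}')\,{\rm d}\lambda,
\]
which together with an end-region bound using $\Omega \le \tfrac12$ produces $2\ell^*_j(1 - \ell^*_j/w_j) \ge \int_{I_j}\Omega(g_{\sf A}')\,{\rm d}\lambda$ and, on summation, $\sum_j \ell^*_j \ge L - o(1)$ provided $\max_j \ell^*_j/w_j \to 0$ along near-optimal partitions. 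Then $n\max_j \ell^*_j \ge \sum_j \ell^*_j \ge L - o(1)$ closes the lower bound.

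The main obstacle will be the lower bound, specifically closing the factor-of-$2$ gap that direct Jensen produces and establishing the required uniform control $\max_j \ell^*_j/w_j \to 0$ along near-optimal partitions. A further subtlety is the singular-continuous part of $g_{\sf S}$: although $g_{\sf S}' = 0$ $\lambda$-a.e.\ and so contributes nothing to $L$, the growth of $g_{\sf S}$ can still enlarge $\ell^*_j$ on intervals intersecting the singular support of $\mu^{-1}$. The cleanest resolution is likely to first handle the piecewise-linear case (where $g_{\sf A}$ has only finitely many distinct slopes $h_i$ on widths $v_i$ and $g_{\sf S} = 0$), for which the optimal strategy allocates $k_i \approx n\Omega(h_i)v_i/L$ atoms to the $i$-th slope region and achieves $n\ell_n^* \to \sum_i \Omega(h_i) v_i = L$ by region-by-region equalization, and then extend to general $\mu$ by an $L^1$-approximation of $g_{\sf A}'$, using continuity of both sides of (\ref{eq4501}) under the approximation together with a negligibility argument for the singular part.
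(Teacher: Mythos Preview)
Your proposal takes a route quite different from the paper's, and in its present form has genuine gaps on both sides.

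\textbf{Upper bound.} Your appeal to Jensen is in the wrong direction. The identity $\ell^*_j = \Omega(\bar h_j^-)\,w_j$ is correct, but since $\Omega$ is concave, Jensen gives
\[
\Omega(\bar h_j^-) \;\ge\; \frac{1}{w_j-2\ell^*_j}\int_{J_{n,j}}\Omega(g')\,{\rm d}\lambda,
\]
which is a \emph{lower} bound for $\ell^*_j$, not an upper one. Independently of the justification, the $\Omega$-equipartition itself need not achieve $\max_j\ell^*_j\le L/n+o(1/n)$ when $g_{\sf A}'$ vanishes on a set of positive measure. If $g'=M$ on $[a,a+\delta]$ and $g'=0$ elsewhere, then $P_0=0$ is mandatory while $H(P_1)=L/n$ forces $P_1=a+\delta/n$, whence $\ell^*_1=M\delta/\bigl(n(M+2)\bigr)=\tfrac{2(M+1)}{M+2}\cdot L/n$; thus $n\ell^*_1$ exceeds $L$ by a fixed factor approaching $2$ as $M\to\infty$. (A correct construction must also place partition points near the boundary of $\{g_{\sf A}'>0\}$, not only at the large atoms.)

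\textbf{Lower bound.} You correctly flag the factor-of-$2$ gap but do not close it. Your chain gives $2\ell^*_j(1-\ell^*_j/w_j)\ge\int_{I_j}\Omega(g_{\sf A}')\,{\rm d}\lambda$, hence only $\sum_j\ell^*_j\ge L/2$; the hypothesis $\max_j\ell^*_j/w_j\to 0$ merely replaces $1-\ell^*_j/w_j$ by $1$ and leaves the factor $2$ intact. The piecewise-linear program you sketch still needs a lower bound for that case, and Jensen remains insufficient there as well.

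\textbf{What the paper does.} The paper avoids both difficulties by exploiting Proposition~\ref{prop4502}, a structural fact about an \emph{optimal} $p_n$: in the absolutely continuous case it yields $\mu^{-1}(J_{n,j})=2\ell_n$ with \emph{equality} for every $j$. This turns the defining relation for $\ell^*_j$ into the exact identity (\ref{eq4505}), which summed gives $\omega_n=\int h_n\,{\rm d}\lambda+(\text{small error})$ with $h_n\to\Omega(g')$ a.e.\ via Lebesgue differentiation; upper and lower bounds thus emerge simultaneously from one asymptotic equality, and no Jensen argument (hence no factor of $2$) is ever needed. The singular case is treated separately (Step~II) using that $(J_{n,j_n(x)})$ shrinks nicely, so $\mu^{-1}(J_{n,j})/\lambda_{n,j}\to 0$ a.e.\ by the differentiation theorem for singular measures; the general case (Step~III) is then assembled via $\ell_{m+n}\le \ell_m^{\langle{\sf A}\rangle}+\ell_n^{\langle{\sf S}\rangle}$ and $\ell_n^{\langle{\sf A}\rangle}\le\ell_n$. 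If you wish to salvage your approach, the missing ingredient in both directions is precisely the equal-contribution structure of the optimizer encoded in Proposition~\ref{prop4502}.
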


\noindent
 As pointed out already in
the Introduction, Theorem \ref{thm450} may be regarded as an analogue for
$d_{\epsilon}$ of (the one-dimensional version of) a classical $d_{\sf W}$-quantization theorem
\cite[Thm.6.2]{GL}, but unlike that theorem, it does not impose a
moment assumption on $(\mu^{-1})_{\sf A}$. For $d_{\sf
  W}$-approximations, such an assumption is known to be essential, as
the $d_{\sf W}$-approximation error for $\mu \in \cP_1$ can decay
arbitrarily slowly without it; see \cite[Ex.6.4]{GL} and
\cite[Thm.5.33]{XB}. In the light of Theorem \ref{thm450}, this
may be viewed as an artefact specific to $d_{\sf W}$ that
does not exist for $d_{\epsilon}$. Also, recall that unlike
$d_{\sf W}$, the metric $d_{\epsilon}$ metrizes precisely the topology
of weak convergence on all of $\cP$, and so does $d_{\sf P}$. As far
as the authors have been able to ascertain, however, all known
results pertaining to the asymptotics of the $d_{\sf P}$-approximation
error also impose additional assumptions \cite[Sec.4]{GL1}, and
despite the similarities between $d_{\sf P}$ and $d_{\epsilon}$,
\cite[Sec.5]{GL1} suggests that the $d_{\sf P}$-approximation error
can decay arbitrarily slowly as well.

When proving Theorem \ref{thm450}, the following observation, a direct consequence of Proposition \ref{prop209}
together with the argument establishing \cite[Thm.3.9]{BX}, is
helpful; its routine verification is left to the interested reader.

\begin{prop}\label{prop4502}
Let $\mu\in \cP$, $\epsilon > 0$, and $n\in \N$. With $\ell =
d_{\epsilon} (\mu, \delta_{\bullet}^{\bullet, n})$, there exists $p\in
\Pi_n$ such that for every $j=1, \ldots , n$,
$$
\mbox{\rm dist}(P_{,j}, G_{\mu^{-1}}) \le \ell \quad \mbox{\it and}
\quad
\mu^{-1} \bigl( ]P_{,j-1} +\ell, P_{,j} - \ell[ \bigr) \le \frac{2\ell}{\epsilon}
\le
\mu^{-1} \bigl( [P_{,j-1} +\ell, P_{,j} - \ell] \bigr) \, .
$$
\end{prop}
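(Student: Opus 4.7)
The upper bound in the displayed inequality is immediate from Proposition~\ref{prop209}(iii). That result provides a best $d_\epsilon$-approximation $\delta_x^p$ of $\mu$, and the optimality $d_\epsilon(\mu,\delta_x^p)=\ell_\bullet^{\bullet,n}$ combined with the inequality $d_\epsilon(\mu,\delta_\bullet^p)\le d_\epsilon(\mu,\delta_x^p)$ and the definition $\ell_\bullet^{\bullet,n}=\min_q\ell_\bullet^q$ forces $\ell_\bullet^p=\ell$. Hence $\ell_{\epsilon F_\mu^{-1},[P_{,j-1},P_{,j}]}^*\le\ell$ for every $j$, which, using the identity $F_\mu^{-1}{}_-(b)-F_\mu^{-1}(a)=\mu^{-1}(]a,b[)$ (with the convention that the right-hand side is taken to be nonpositive when $a\ge b$), unravels into $\mu^{-1}(]P_{,j-1}+\ell,P_{,j}-\ell[)\le 2\ell/\epsilon$.

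For the remaining two assertions, the plan is to refine the choice of $p$ within the set $S:=\{q\in\Pi_n:\ell_\bullet^q=\ell\}$, which is nonempty (by the above) and closed in the compact set $\Pi_n$ by the continuity of $q\mapsto\ell_\bullet^q$ (Proposition~\ref{prop204}(iv)), hence itself compact. I select $p\in S$ to be \emph{lexicographically largest} in the partial sums $(P_{,1},\ldots,P_{,n-1})$: first maximize $P_{,1}$ over $S$, then maximize $P_{,2}$ subject to this, and so on. Compactness guarantees existence.

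The claim is that this lex-maximal $p$ satisfies both $\mbox{\rm dist}(P_{,j},G_{\mu^{-1}})\le\ell$ and $\mu^{-1}([P_{,j-1}+\ell,P_{,j}-\ell])\ge 2\ell/\epsilon$ for every $j$. If $\mbox{\rm dist}(P_{,j},G_{\mu^{-1}})>\ell$, then $F_\mu^{-1}$ is constant on an open neighborhood of $[P_{,j}-\ell,P_{,j}+\ell]$; a small upward perturbation $P_{,j}\mapsto P_{,j}+\delta$ (achieved by transferring mass from $p_{,j+1}$, or propagating through $p_{,j+2},\ldots$ if $p_{,j+1}=0$) leaves $F_\mu^{-1}{}_-(\cdot-\ell)$ and $F_\mu^{-1}(\cdot+\ell)$ unchanged at the shifted endpoint, so the modified partition still lies in $S$, contradicting lex-maximality. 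If instead $\mu^{-1}([P_{,j-1}+\ell,P_{,j}-\ell])<2\ell/\epsilon$, then the slack in the already-established open-interval bound is positive, and the right-continuity of $F_\mu^{-1}$ makes $\mu^{-1}([P_{,j}-\ell,P_{,j}+\delta-\ell[)$ approach $\mu^{-1}(\{P_{,j}-\ell\})$ as $\delta\downarrow 0$, which fits inside the slack for $\delta$ small; the resulting upward shift preserves the upper bound for interval $j$, trivially preserves it for interval $j+1$ (which shrinks), and therefore again contradicts lex-maximality.

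The main obstacle is the atomic accounting at the boundary points $P_{,j}\pm\ell$: one must verify that the perturbation strictly increases the relevant partial sum while keeping every $\ell_{\epsilon F_\mu^{-1},[P_{,k-1},P_{,k}]}^*\le\ell$, which is delicate when $p_{,j+1}=0$ and the shift must propagate. The continuity of $I\mapsto\ell_{f,I}^*$ in Proposition~\ref{prop204}(iv) together with right-continuity of $F_\mu^{-1}$ reduce this to a routine, if somewhat involved, case analysis, along the lines of \cite[Thm.~3.9]{BX}.
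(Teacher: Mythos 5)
Your derivation of the upper bound $\mu^{-1}\bigl(]P_{,j-1}+\ell,P_{,j}-\ell[\bigr)\le 2\ell/\epsilon$ from $\ell_\bullet^p=\ell$ is correct, and the perturbation argument is sound at any index $j$ with $P_{,j}<1$. The gap is in the selection of $p$: the lexicographically largest element of $S$ need not satisfy the conclusion, and the failure occurs exactly where the perturbation argument has no room to move. Concretely, take $\epsilon=1$, $n=2$, and $\mu=\frac1{10}\delta_0+\frac45\delta_{1/5}+\frac1{10}\delta_{2/5}$, so that $\mu^{-1}=\frac15\delta_{1/10}+\frac15\delta_{9/10}$ and $G_{\mu^{-1}}=\{0,\frac1{10},\frac9{10},1\}$. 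A direct computation of $\ell^*_{F_\mu^{-1},[0,b]}$ and $\ell^*_{F_\mu^{-1},[b,1]}$ shows that $\ell_\bullet^q=\frac1{10}$ for \emph{every} $q\in\Pi_2$, i.e.\ $\ell=\frac1{10}$ and $S=\Pi_2$. Your lex-maximal $p$ is then $(1,0)$, for which $[P_{,1}+\ell,P_{,2}-\ell]=[\frac{11}{10},\frac9{10}]=\varnothing$, so $\mu^{-1}\bigl([P_{,1}+\ell,P_{,2}-\ell]\bigr)=0<2\ell/\epsilon$ and the lower bound fails at $j=2$ (the proposition itself holds here, e.g.\ via $p=(\frac15,\frac45)$). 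The underlying phenomenon is that $\ell_\bullet^{\bullet,n}=\ell_\bullet^{\bullet,m}>0$ is possible for some $m<n$; then $S$ contains weight vectors with trailing zero weights, lex-maximization actively selects one, and the resulting degenerate intervals at the right end can never carry $\mu^{-1}$-mass $2\ell/\epsilon>0$. At those indices $P_{,j}=1$ cannot be increased, so no contradiction with lex-maximality arises. Note also that the conclusion forces $P_{,j}-P_{,j-1}\ge 2\ell$ for all $j$ (this is used in the proof of Theorem \ref{thm450}), so any valid $p$ must genuinely use all $n$ atoms, which extremizing over the exactly-optimal set $S$ does not guarantee.

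The construction the paper points to (the argument of \cite[Thm.3.9]{BX}) avoids this by approximating in the error level rather than selecting within $S$: for $y<\ell$ run the greedy recursion $Q_0(y)=0$, $Q_i(y)=\sup\bigl\{Q:\ell^*_{\epsilon F_\mu^{-1},[Q_{i-1}(y),Q]}\le y\bigr\}$. Maximality of each $Q_i(y)$ forces both $\mbox{\rm dist}(Q_i(y),G_{\mu^{-1}})\le y$ and $\mu^{-1}\bigl(]Q_{i-1}(y)+y,Q_i(y)-y]\bigr)\ge 2y/\epsilon$, and minimality of $\ell=\ell_\bullet^{\bullet,n}$ guarantees $\lim_{y\uparrow\ell}Q_n(y)=1$; the desired $P_{,j}$ are the limits $\lim_{y\uparrow\ell}Q_j(y)$, with the inequalities passing to the limit via Proposition \ref{prop204}(iv) and monotone convergence of $\mu^{-1}$ on the shrinking/expanding intervals. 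In the example above this yields $P_{,1}=\lim_{y\uparrow 1/10}(\frac1{10}+y)=\frac15$, not $P_{,1}=1$. Your argument could be repaired along these lines, but not by a different tie-breaking rule on $S$ alone.
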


\begin{proof}[Proof of Theorem \ref{thm450}]
Throughout this proof, for convenience let $g=F_{\mu}^{-1}$ and $G = G_{\mu^{-1}}$ as before,
but also $\ell_n = d_{\epsilon} (\mu, \delta_{\bullet}^{\bullet, n})$
and $\omega_n = n \ell_n$ for all $n\in \N$, as well as $\omega^- = \liminf_{n\to \infty} \omega_n$ and $\omega^+ =
\limsup_{n\to \infty} \omega_n$. Again it suffices to consider the case of
$\epsilon = 1$. Note that $\ell_n = 0$ for some (and hence all
sufficiently large) $n\in \N$ if and only if $G$ is finite,
in which case (\ref{eq4501}) clearly is correct. Thus assume
$G$ to be infinite from now on, and consequently $\ell_n > 0$ for
all $n\in \N$.

Given $n\in \N$, choose $p_n \in \Pi_n$ as in Proposition
\ref{prop4502}, and notice that $\ell_n > 0$ implies $\min_{j=1}^n
(P_{n,j} - P_{n,j-1})\ge 2 \ell_n > 0$; in particular, $P_{n,j-1} <
P_{n,j}$ for all $j=1,\ldots , n$. Consequently, for every $x\in
[0,1[$ there exists a unique $j_n(x)\in \{1, \ldots , n\}$ with $P_{n,
j_n(x)-1} \le x < P_{n, j_n (x)}$. For convenience, let $J_{n,j} =
[P_{n,j-1} + \ell_n , P_{n,j} - \ell_n]$ for all $j=1, \ldots ,
n$, and hence $\lambda_{n,j} := \lambda (J_{n,j}) = P_{n,j} -
P_{n,j-1} - 2\ell_n$. Next, recall that the set $U:= [0,1]\setminus G$ is
open, possibly empty. If $U \ne \varnothing$ let $I_1, I_2,
\ldots$ be its (at most countably many) connected components, that is,
the disjoint open intervals with endpoints in $G$ and $U = \bigcup_{k}
I_k$. Thus, for every $x\in U$ there exists a unique $k(x)\in \{1,2,
\ldots \}$
with $x\in I_{k(x)}$. Finally, consider the subset
$G^{\dagger}$ of $G$ defined as
\begin{equation}\label{eq4503Z}
G^{\dagger} = \bigl\{x\in G : I \cap G  =
  \{x\}  \: \mbox{\rm for some interval $I$ with $\lambda(I)>0$} \bigr\} \, .
\end{equation}
Notice that $\{0,1\}\subset G^{\dagger}$, and
$G^{\dagger}$ is (at most) countable. Utilizing Proposition
\ref{prop4502}, it is readily checked that
\begin{equation}\label{eq4503A}
\lim\nolimits_{n\to \infty} [P_{n, j_n(x) -1}, P_{n,j_n(x)}] = \{x\}
\quad \forall x \in G\setminus G^{\dagger} \, ,
\end{equation}
but also
\begin{equation}\label{eq4503B}
\lim\nolimits_{n\to \infty} [P_{n, j_n(x) -1}, P_{n,j_n(x)}] =
I_{k(x)} \quad \forall x \in U \, .
\end{equation}
With these preparations, the proof is now carried out in three separate
steps for the reader's convenience.

\medskip
\noindent
{\em Step I: Assume $\mu^{-1}$ is absolutely continuous.}
\smallskip

\noindent
Proposition \ref{prop4502} with $\epsilon=1$ yields $\mu^{-1}
(J_{n,j}) = 2\ell_n$ or, equivalently,
\begin{equation}\label{eq4504}
g(P_{n,j} - \ell_n) - \ell_n = g (P_{n,j-1} + \ell_n) + \ell_n \quad
\forall j = 1 , \ldots , n \, ,
\end{equation}
and since $\mu^{-1}$ is absolutely continuous also $\lambda_{n,j} >
0$. Fix any $0< a < 1$, and recalling that $g$ is differentiable
$\lambda$-almost everywhere, with $g'\ge
0$ integrable over every compact subinterval of $]0,1[$, pick a
non-negative continuous function $f_a: \:  ]0,1[ \: \to \R$ with $\int_{[0,1]}
|g' - f_a|\, {\rm d}\lambda < a$. (Notice that $g',f_a$ may not be
integrable over $[0,1]$.)
If $U \ne \varnothing$ then also pick $k_a \in \N$ large enough
to ensure $\lambda ( \bigcup_{k>k_a} I_k) < a$, and for every $k=1, \ldots
, k_a$ pick a continuous function $e_k : \: ]0,1[ \: \to [0,1]$ with $e_k(x)
= 1$ for all $x\in\:  ]0,1[\setminus I_k$ such that $\int_{I_k} f_a e_k <
a\lambda (I_k)$. Let $f = f_a \prod_{k=1}^{k_a} e_k$; in case
$U=\varnothing$ simply let $f=f_a$. Clearly, $f$
is non-negative and continuous on $]0,1[$, with
\begin{equation}\label{eq4504A}
\int_{[0,1]} |g' - f|\, {\rm d}\lambda = \int_{G} |g' -
f_a|\, {\rm d}\lambda + \int_{U} |f|\, {\rm d}\lambda \le
\int_{[0,1]} |g' - f_a|\, {\rm d}\lambda < a \, ,
\end{equation}
since $g'$ vanishes on $U$. Next, deduce from (\ref{eq4504}) that
$$
\frac1{\lambda_{n,j}} \int_{J_{n,j}} f \, {\rm d}\lambda =
\frac{2\ell_n}{\lambda_{n,j}}  +
\frac1{\lambda_{n,j}} \int_{J_{n,j}} (f - g')  \, {\rm d}\lambda =
\frac{2\ell_n - \displaystyle \int_{J_{n,j} } (g' - f)\, {\rm d} \lambda}{P_{n,j} -
  P_{n,j-1} - 2 \ell_n} \quad \forall j=1 , \ldots , n \, ,
$$
and consequently
\begin{equation}\label{eq4505}
2 \, \Omega \! \left( 
\frac1{\lambda_{n,j}} \int_{J_{n,j}} f \, {\rm d}\lambda \right) 
(P_{n,j} - P_{n,j-1}) = 2\ell_n -  \frac{\displaystyle \int_{J_{n,j}} (g' - f)\, {\rm
    d}\lambda}{1 + \displaystyle  \frac1{\lambda_{n,j}} \int_{J_{n,j}} f \, {\rm
    d}\lambda }
\quad \forall j = 1, \ldots , n \, .
\end{equation}
Summing (\ref{eq4505}) over $j=1, \ldots , n$ yields
\begin{equation}\label{eq4506}
\omega_n -  \int_{[0,1]} h_n \, {\rm d}\lambda = \frac12 \sum\nolimits_{j=1}^n   \frac{\displaystyle \int_{J_{n,j}} (g' - f)\, {\rm
    d}\lambda}{1 + \displaystyle  \frac1{\lambda_{n,j}} \int_{J_{n,j}} f \, {\rm
    d}\lambda } \, ,
\end{equation}
with the piecewise constant non-negative function $h_n:[0,1[\: \to \R$ given by
$$
h_n(x) = \Omega \left( 
\frac1{\lambda_{n,j_n(x)}} \int_{J_{n,j_n(x)}} f \, {\rm d}\lambda
\right) \quad \forall x \in [0,1[ \, .
$$
Recall that $f\ge 0$, and so the right-hand side in (\ref{eq4506})
is bounded, for every $n\in \N$, by $\frac12 \sum_{j=1}^n \int_{J_{n,j}} |g'- f|\, {\rm
  d}\lambda \le \frac12 \int_{[0,1]} |g' - f|\, {\rm d}\lambda <
\frac12 a$. Deduce from (\ref{eq4503A}) and the continuity of $f$ that
\begin{equation}\label{eq4508}
\lim\nolimits_{n\to \infty} h_n (x) = \Omega \bigl( f(x)\bigr) \quad
\forall x \in G \setminus G^{\dagger} \, .
\end{equation}
Similarly, (\ref{eq4503B}) and the choice of the functions $e_k$ for
$k=1, \ldots, k_a$ imply that
\begin{equation}\label{eq4509}
\lim\nolimits_{n\to \infty} h_n(x) = \Omega \left( \frac1{\lambda
    (I_{k(x)})} \int_{I_{k(x)}} f \, {\rm d}\lambda \right) \le \Omega
(a) < \frac{a}{2} \quad \forall x \in
\bigcup\nolimits_{k=1}^{k_a} I_k \, .
\end{equation}
The elementary estimate, valid for all $n\in \N$,
\begin{align*}
\left|
\omega_n - \int_{[0,1]} \Omega (g') \, {\rm d}\lambda
\right| & \le \left|
\omega_n - \int_{[0,1]} h_n  \, {\rm d}\lambda
\right| + \int_{G} |h_n - \Omega (f)|\, {\rm d}\lambda \: +
\\ & \quad + \: \int_{[0,1]} |\Omega (f) - \Omega (g')| \, {\rm d}\lambda +
 \int_{\bigcup_{k=1}^{k_a} I_k} h_n \, {\rm d}\lambda +
\int_{\bigcup_{k> k_a} I_k} h_n \, {\rm d}\lambda \, ,
\end{align*}
together with (\ref{eq4506}), the Dominated Convergence Theorem and
(\ref{eq4508}), the estimate (\ref{eq4504A}), Fatou's lemma and
(\ref{eq4509}), as well as the choice of $k_a$ and the fact that $0\le h_n  \le \frac12$, yield
$$
\limsup\nolimits_{n\to \infty} \left| \omega_n - \int_{[0,1]} \Omega
  (g') \, {\rm d}\lambda \right| \le \frac{a}{2} + 0 +
\frac{a}{2} +\frac{ a}{2} + \frac{a}{2}  = 2a \, .
$$
Since $0<a<1$ has been arbitrary, $\lim_{n\to \infty} \omega_n =
\int_{[0,1]} \Omega (g') \, {\rm d}\lambda$, i.e., (\ref{eq4501}) holds.

\medskip
\noindent
{\em Step II: Assume $\mu^{-1}$ is singular.}
\smallskip

\noindent
Given any $0<a<1$, let $U_a = \{x \in [0,1] : \mbox{\rm dist}\, (x, G)
\ge a\}$. Note that $U_a\subset [a,1-a]$ is a compact, possibly empty subset of $U$,
so $U_a \cap G = \varnothing$. Assume for the time being that all
atoms of $\mu^{-1}$ in $[a,1-a]$ are small in that
\begin{equation}\label{eq4601}
\mu^{-1} (\{x\}) \le a^2 \quad \forall x \in [ a, 1-a ]  \, .
\end{equation}
Recall that $2\ell_n \le \mu^{-1} (J_{n,j})$ for all $j=1, \ldots ,
n$, by Proposition \ref{prop4502}, and correspondingly
$$
\omega_n \le \sum\nolimits_{j=1}^n \Omega \left( \frac{\mu^{-1}
    (J_{n,j})}{\lambda_{n,j}}\right) (P_{n,j} - P_{n,j-1}) =
\int_{[0,1]} \widetilde{h}_n \, {\rm d}\lambda \, ,
$$
with the piecewise constant function $\widetilde{h}_n : [0,1[\: \to
\R$ given by
$$
\widetilde{h}_n (x) = \Omega \left( \frac{\mu^{-1}
    (J_{n,j_n(x)})}{\lambda_{n, j_n(x)}} \right) \quad \forall x \in
[0,1[ \, .
$$
First, observe that if $x\in U_a$ then (\ref{eq4503B}) and
(\ref{eq4601}) imply that $\limsup_{n\to \infty} \mu^{-1} (J_{n,j_n(x)})
\le \mu^{-1} (I_{k(x)}) \le 2 a^2$, whereas clearly $\lim_{n\to \infty}
\lambda_{n,j_n(x)} = \lambda (I_{k(x)})\ge 2 a$. Thus
\begin{equation}\label{eq4602}
\limsup\nolimits_{n\to \infty} \widetilde{h}_n(x) \le \Omega (a) <
\frac{a}{2} \quad \forall x \in U_a \, .
\end{equation}
Next, notice that if $x\in G\setminus G^{\dagger}$ then $([P_{n,j_n(x)
  -1}, P_{n,j_n(x)}])$ shrinks to $x$ nicely, and hence
$$
\lim\nolimits_{n\to \infty} \frac{\mu^{-1} ([P_{n,j_n(x) -1},
  P_{n,j_n(x)}])}{P_{n,j_n(x)} - P_{n,j_n(x) -1}} = 0 \quad \mbox{\rm
  for $\lambda$-almost every } x\in  G \, ,
$$
by \cite[Thm.7.13]{Rudin}. Thus $\lim_{n\to \infty} \ell_n/(
P_{n,j_n(x)} - P_{n,j_n(x)-1}) = 0$ for $\lambda$-almost every $x\in
G$, which in turn shows that $(J_{n,j_n(x)})$ shrinks to $x$ nicely as
well. Applying \cite[Thm.7.13]{Rudin} once more yields $\lim_{n\to
  \infty} \mu^{-1} (J_{n,j_n(x)})/\lambda_{n,j_n(x)} =0$ for
$\lambda$-almost every $x\in G$, and thus
\begin{equation}\label{eq4603}
\lim\nolimits_{n\to \infty} \widetilde{h}_n(x) = 0 \quad
\mbox{\rm for $\lambda$-almost every } x\in G \, .
\end{equation}
Recalling that $G^{\dagger}$ is countable, deduce from (\ref{eq4602})
and (\ref{eq4603}) that
\begin{align}\label{eq4604}
\omega^+ \le \limsup\nolimits_{n\to \infty} \int_{[0,1]}
\widetilde{h}_n \, {\rm d}\lambda & = \limsup\nolimits_{n\to \infty}
\left( \int_{U_a} \widetilde{h}_n \, {\rm d}\lambda +   \int_{G}
\widetilde{h}_n \, {\rm d}\lambda  + \int_{[0,1] \setminus ( U_a \cup G)}
\widetilde{h}_n \, {\rm d}\lambda \right) \nonumber \\
& \le \frac{a}{2} +   \frac12 \bigl( 1 - \lambda (U_a \cup G )\bigr) \, .
\end{align}
In summary, (\ref{eq4604}) holds provided that $\mu$ satisfies
(\ref{eq4601}).

To conclude the argument in the case of $\mu^{-1}$ being singular, given
$0<b<1$, pick $0<a < b$ so small that $\lambda (U_a \cup G ) > 1 -
b$. Noting that the set $G_a:=
\bigl\{ x \in  [a,1-a]  : \mu^{-1} (\{x\}) > a^2 \bigr\}$ is
finite, consider $\widetilde{g}\in \cF$ given by
$$
\widetilde{g} = g - \sum\nolimits_{x \in G_a} \mu^{-1} (\{ x\}) {\bf
  1}_{[x, +\infty[} \, ,
$$ 
as well as the unique $\widetilde{\mu}\in \cP$ with
$F_{\widetilde{\mu}}^{-1} = \widetilde{g}$. Crucially, (\ref{eq4601})
holds with $\widetilde{\mu}$ instead of $\mu$. Moreover, notice that
$\widetilde{G}:= G_{\widetilde{\mu}^{-1}} \supset G \setminus G_a$,
and clearly $\widetilde{U}_a \supset U_a$, where $\widetilde{U}_a =
\{x\in [0,1] : \mbox{\rm dist}\, (x, \widetilde{G}) \ge a\}$. Thus
$\widetilde{U}_a \cup \widetilde{G} \supset ( U_a \cup G) \setminus G_a$, and (\ref{eq4604}) applied to
$\widetilde{\mu}$, with $\widetilde{\ell}_n := d_1 (\widetilde{\mu},
\delta_{\bullet}^{\bullet, n})$, yields
$$
\limsup\nolimits_{n\to \infty} n \widetilde{\ell}_n \le  \frac{a}{2} +
\frac12 \bigl( 1 - \lambda( \widetilde{U}_a \cup\widetilde{ G} )\bigr) < b \, . 
$$
Finally, let $m_a = \# G_a$ and observe that $\ell_{n + m_a} \le \widetilde{\ell}_n$ for
all $n\in \N$, so
$$
\omega_n \le n \widetilde{\ell}_{n - m_a} = (n - m_a)
\widetilde{\ell}_{n - m_a} + m_a \widetilde{\ell}_{n - m_a}
$$
for all $n> m_a$. Since $\lim_{n\to \infty} \widetilde{\ell_n} =0$,
clearly $\omega^+ \le \limsup_{n\to \infty} n \widetilde{\ell}_n <
b$, and since $0<b<1$ has been arbitrary, $\omega^+ = 0$. Thus
(\ref{eq4501}) holds, with vanishing right-hand side, whenever
$\mu^{-1}$ is singular.

\medskip
\noindent
{\em Step III: Let $\mu\in \cP$ be arbitrary.}
\smallskip

\noindent
As in the proof of Theorem \ref{thm309}, write $g = g_{\sf A} + g_{\sf
S}$ with $g_{\sf A}, g_{\sf S} \in \cF$ such that $\lambda_{g_{\sf A}}
= (\mu^{-1})_{\sf A}$ and $\lambda_{g_{\sf S}} = (\mu^{-1})_{\sf
  S}$. Let $\mu^{\langle {\sf A}\rangle}$ and $\mu^{\langle{\sf S}\rangle}$ be the (uniquely
determined) probability measures with $(\mu^{\langle {\sf A}\rangle})^{-1} =
(\mu^{-1})_{\sf A}$ and $(\mu^{\langle {\sf S}\rangle})^{-1} =
(\mu^{-1})_{\sf S}$, respectively. (Notice that in general $\mu^{\langle {\sf A}\rangle}\ne \mu_{\sf A}$ and $\mu^{\langle {\sf S}\rangle}\ne \mu_{\sf S}$.)
Also, for every $n \in \N$ let $\ell_n^{\langle {\sf A} \rangle}= d_1
( \mu^{\langle {\sf A}\rangle} , \delta_{\bullet}^{\bullet, n})$ and
$\ell_n^{\langle {\sf S} \rangle}= d_1 (\mu^{\langle {\sf S}\rangle} ,
\delta_{\bullet}^{\bullet, n})$. Given any $m,n\in \N$,
pick $p_m^{\langle {\sf A}\rangle} \in \Pi_m$
and $p_n^{\langle {\sf S}\rangle}\in \Pi_n$ as in Proposition
\ref{prop4502}. By considering the joint
partition of $[0,1]$ generated by $\left\{ P_{m,i}^{\langle {\sf A}\rangle} : i=0,
\ldots , m\right\}$ and $\left\{P_{n,j}^{\langle {\sf S}\rangle } : j = 0, \ldots , n\right\}$, it is
readily seen that $\ell_{m+n} \le \ell_m^{\langle {\sf
    A}\rangle} + \ell_n^{\langle {\sf S} \rangle}$. For every $0<a<1$ and $n\in \N$, therefore
$$
\omega_n \le n \ell_{\lfloor (1-a)n \rfloor + \lfloor an \rfloor} \le
\frac{1 + \lfloor (1-a) n \rfloor}{1 - a} \ell_{\lfloor
  (1-a) n\rfloor}^{\langle {\sf A}\rangle} + \frac{1 +\lfloor an
  \rfloor}{a} \ell_{  \lfloor an \rfloor}^{\langle {\sf S}\rangle} \, ,
$$
and applying Steps I and II to $\mu^{\langle {\sf A}\rangle }$ and
$\mu^{\langle {\sf
    S}\rangle }$, respectively, yields
$$
\omega^+ \le \frac1{1-a} \int \Omega \left(
 \frac{{\rm d} (\mu^{\langle {\sf A}\rangle})^{-1}_{\sf A}}{{\rm d} \lambda}
\right) 
{\rm d}\lambda = \frac1{1-a} \int \Omega (g_{\sf A}') \, {\rm d}\lambda \, .
$$
(Recall that $\lim_{n\to \infty} \ell_n^{\langle {\sf A} \rangle} = \lim_{n\to
  \infty} n \ell_n^{\langle {\sf S} \rangle} =0$.) Since $0<a<1$ has been arbitrary,
$\omega^+\le \int \Omega (g_{\sf A}') \, {\rm d}\lambda$. To obtain a
lower bound for $\omega^-$, recall from Proposition \ref{prop4502}
that
$$
g_- (P_{n,j} - \ell_n) - \ell_n \le g(P_{n,j-1} + \ell_n) + \ell_n
\quad \forall j = 1, \ldots , n \, ,
$$
and since $g_- = g_{\sf A} + g_{{\sf S}-}$,
\begin{align*}
g_{\sf A} (P_{n,j} - \ell_n) - \ell_n  & \le g_{\sf A} (P_{n, j-1} +
\ell_n) + \ell_n - \max \bigl\{ 0 , \bigl( g_{{\sf S}-} (P_{n,j}- \ell_n ) - g_{\sf S}
(P_{n,j-1}+ \ell_n)\bigr) \bigr\}  \\
& \le g_{\sf A} (P_{n, j-1} +
\ell_n) + \ell_n \quad \forall j = 1, \ldots , n \, ,
\end{align*}
from which it is clear that $\ell_n^{\langle {\sf A} \rangle } \le \ell_n$ for every
$n\in \N$. Applying Step I to $\mu^{\langle {\sf A} \rangle}$ yields $\omega_- \ge
\liminf_{n\to \infty} n \ell_n^{\langle {\sf A} \rangle} = \int \Omega (g_{\sf
  A}')\, {\rm d}\lambda $. Hence $\lim_{n\to \infty} \omega_n = \int \Omega (g_{\sf
  A}')\, {\rm d}\lambda $, and the proof is complete.
\end{proof}

Along the lines of the above proof, and by considering the absolutely
continuous part of $\mu$ rather than of $\mu^{-1}$, the following dual version of
Theorem \ref{thm450} can be established; the routine details are left
to the interested reader.

\begin{prop}\label{prop454}
Let $\mu \in \cP$ and $\epsilon > 0$. Then
$$
\lim\nolimits_{n\to \infty} n d_{\epsilon} (\mu,
\delta_{\bullet}^{\bullet, n}) = \epsilon \int \Omega \! \left(
  \frac1{\epsilon} \cdot  
  \frac{{\rm d} \mu_{\sf A}}{{\rm d} \lambda}\right) {\rm d}
\lambda  \, .
$$
\end{prop}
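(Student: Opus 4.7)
The plan is to mimic, mutatis mutandis, the three-step proof of Theorem \ref{thm450}, but built around the \emph{first} expression in (\ref{eq208}) rather than the second; that is, I would use
$d_{\epsilon}(\mu, \delta_x^p) = \epsilon \max_{j=0}^n \ell_{F_{\mu}/\epsilon, [x_{,j}, x_{,j+1}]}(P_{,j}/\epsilon)$, so that partitions of $\R$ by the atoms $x_{,j}$ take over the role played by partitions of $[0,1]$ by the weights $P_{,j}$, and the absolutely continuous part of $\mu$ itself (not of $\mu^{-1}$) controls the asymptotics. The first preparatory task would be to establish a dual of Proposition \ref{prop4502}: with $\ell = d_\epsilon(\mu, \delta_{\bullet}^{\bullet, n})$, I would show that there exists $x \in \Xi_n$ satisfying $\mathrm{dist}(x_{,j}, G_{\mu}) \le \ell/\epsilon$ for all $j$, together with
$$
\mu\bigl(\,]x_{,j}+\ell/\epsilon, x_{,j+1}-\ell/\epsilon[\,\bigr) \le 2\ell \le \mu\bigl([x_{,j}+\ell/\epsilon, x_{,j+1}-\ell/\epsilon]\bigr)
$$
for interior indices $j = 1, \ldots, n-1$, as well as the one-sided boundary analogues $\mu(\,]-\infty, x_{,1}-\ell/\epsilon[\,) \le \ell$ and $\mu(\,]x_{,n}+\ell/\epsilon, +\infty[\,) \le \ell$ (with matching lower bounds). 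These would follow by unpacking $\ell_{F_\mu/\epsilon, [x_{,j}, x_{,j+1}]}(P_{,j}/\epsilon)$ exactly as the inequalities of Proposition \ref{prop4502} arise from unpacking $\ell_{F_\mu^{-1}, [P_{,j-1}, P_{,j}]}(x_{,j})$.

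In the absolutely continuous case (Step I), the above inequalities collapse to equalities and the same elementary algebra used to pass from (\ref{eq4504}) to (\ref{eq4505}) yields the dual identity
$$
\epsilon\,\Omega\!\left( \frac{1}{\lambda'_{n,j}} \int_{J'_{n,j}} (f_{\mu}/\epsilon)\,{\rm d}\lambda \right) (x_{,j+1} - x_{,j}) = \ell_n \quad (\,j=1,\ldots,n-1\,),
$$
where $f_{\mu} = {\rm d}\mu/{\rm d}\lambda$, $J'_{n,j} = [x_{,j} + \ell_n/\epsilon, x_{,j+1} - \ell_n/\epsilon]$, and $\lambda'_{n,j} = \lambda(J'_{n,j})$; this identity is in fact exact, because $\mu(J'_{n,j}) = 2\ell_n$. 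Summing over $j$ and passing to the limit via the same $L^1$-approximation-plus-Lebesgue-differentiation scheme as in Step I of Theorem \ref{thm450}---with the role of (\ref{eq4503A})--(\ref{eq4503B}) now played by analogous convergence statements for $[x_{,j_n(x)}, x_{,j_n(x)+1}]$ at points of $G_{\mu}$ and of $\R \setminus G_{\mu}$---would deliver $n\ell_n \to \epsilon \int \Omega(\epsilon^{-1} f_{\mu})\,{\rm d}\lambda$. Step II (singular $\mu$) would then proceed in exact parallel to Step II of Theorem \ref{thm450}: truncate the atoms of $\mu$ of mass exceeding $a^2$ on a compact set, apply the dual Lebesgue-differentiation estimate to the remainder, and conclude $n\ell_n \to 0$. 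Step III would decompose $\mu = \mu_{\sf A} + \mu_{\sf S}$, normalize each summand to a probability measure $\mu^{\langle {\sf A}\rangle}$, $\mu^{\langle {\sf S}\rangle}$, and combine via a subadditivity bound $\ell_{m+n} \le \ell^{\langle {\sf A}\rangle}_m + \ell^{\langle {\sf S}\rangle}_n$ exactly mirroring Step III of Theorem \ref{thm450}.

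I expect the main obstacle to lie in the unboundedness of the partition: unlike in Theorem \ref{thm450}, the intervals $[x_{,j}, x_{,j+1}]$ need not have bounded total length, so the dominated-convergence and Fatou arguments must be rerun over $\R$ rather than over $[0,1]$, and the extremal intervals $[-\infty, x_{,1}]$ and $[x_{,n}, +\infty]$ have infinite length so the key identity does not apply there. The rescue is the pointwise bound $\epsilon\,\Omega(z/\epsilon) \le z/2$ for $z \ge 0$, which gives $\epsilon \int \Omega(\epsilon^{-1} f_{\mu})\,{\rm d}\lambda \le \mu_{\sf A}(\R)/2 \le \tfrac12$ and in particular puts $\Omega(\epsilon^{-1} f_{\mu})$ in $L^1(\R)$; the same bound, applied to the boundary intervals directly, shows that each contributes at most $\ell_n/2 + o(\ell_n)$ to the Riemann sum, which is dwarfed by the interior sum of $(n-1)\ell_n$ and thus does not disturb the limit $n\ell_n \to \epsilon \int \Omega(\epsilon^{-1} f_{\mu})\,{\rm d}\lambda$.
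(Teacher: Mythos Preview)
Your proposal is correct and follows precisely the route the paper itself indicates: the paper does not write out a proof of Proposition~\ref{prop454} but merely states that it ``can be established along the lines of the above proof, and by considering the absolutely continuous part of $\mu$ rather than of $\mu^{-1}$,'' leaving ``the routine details \ldots to the interested reader.'' Your plan---use the first rather than the second expression in (\ref{eq208}), formulate the dual of Proposition~\ref{prop4502}, and rerun Steps I--III with $F_{\mu}$ in place of $F_{\mu}^{-1}$---is exactly that dualization, and your identification of the unbounded-domain issue (together with the remedy via $\epsilon\,\Omega(z/\epsilon)\le z/2$, which forces $\Omega(\epsilon^{-1}f_{\mu})\in L^1(\R)$ and controls the two infinite boundary intervals) is the one genuinely new wrinkle relative to Theorem~\ref{thm450}.

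One small caution on Step III: normalizing $\mu_{\sf A}$ and $\mu_{\sf S}$ to probability measures rescales their densities by $1/\mu_{\sf A}(\R)$ and $1/\mu_{\sf S}(\R)$, so the subadditivity bound and the back-substitution into the integral require carrying those constants; this is routine but not quite a verbatim mirror of the paper's Step III (where $(\mu^{-1})_{\sf A}$ and $(\mu^{-1})_{\sf S}$ are already inverse measures of honest probabilities without rescaling). Alternatively, since Theorem~\ref{thm450} is already in hand, one could bypass the entire three-step argument and simply verify the pointwise identity $\Omega(\epsilon/f)\cdot f = \epsilon\,\Omega(f/\epsilon)$, which after the change of variables $x = F_{\mu}^{-1}(y)$ shows directly that the two limiting integrals coincide whenever $\mu$ is absolutely continuous; extending this to general $\mu$ then only requires the (standard) fact that $(\mu^{-1})_{\sf A}$ and $\mu_{\sf A}$ determine one another.
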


\noindent
Notice that Theorem \ref{thm450} and Proposition \ref{prop454}
together imply the familiar fact \cite[Ch.A]{BL} that  $\mu^{-1}$ is
singular if and only if $\mu$ is singular, and hence yield a direct analogue of
Proposition \ref{prop303} in the context of best approximations.

\begin{prop}\label{cor455}
For every $\mu \in \cP$ and $\epsilon > 0$, the following are
equivalent:
\begin{enumerate}
\item $\lim_{n\to \infty} n d_{\epsilon} (\mu,
  \delta_{\bullet}^{\bullet, n}) = 0$;
\item $\mu_{\sf A} = 0$;
\item $(\mu^{-1})_{\sf A} = 0$.
\end{enumerate}
\end{prop}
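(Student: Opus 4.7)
The plan is to derive Proposition \ref{cor455} as an essentially immediate consequence of Theorem \ref{thm450} and Proposition \ref{prop454}. These two results express the limit in (i) as integrals of $\Omega$ applied to the densities of $(\mu^{-1})_{\sf A}$ and $\mu_{\sf A}$, respectively, so the whole question reduces to understanding when such an integral vanishes.

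First I would record the following property of $\Omega$: it is continuous, strictly increasing on $\R$, with $\Omega(0)=0$ and $\Omega(x)>0$ for all $x>0$. Since the Radon--Nikodym derivatives $\frac{{\rm d}(\mu^{-1})_{\sf A}}{{\rm d}\lambda}$ and $\frac{{\rm d}\mu_{\sf A}}{{\rm d}\lambda}$ are non-negative $\lambda$-almost everywhere, the integrands in (\ref{eq4501}) and in Proposition \ref{prop454} are non-negative $\lambda$-a.e. By the standard fact that the Lebesgue integral of a non-negative measurable function vanishes if and only if the function itself vanishes $\lambda$-a.e., and by strict monotonicity of $\Omega$ at $0$, one has
$$
\int \Omega\!\left(\epsilon \frac{{\rm d} (\mu^{-1})_{\sf A}}{{\rm d}\lambda}\right) {\rm d}\lambda = 0 \iff \frac{{\rm d} (\mu^{-1})_{\sf A}}{{\rm d}\lambda} = 0 \text{ } \lambda\text{-a.e.} \iff (\mu^{-1})_{\sf A} = 0,
$$
and analogously with $\mu_{\sf A}$ and factor $1/\epsilon$ in place of $(\mu^{-1})_{\sf A}$ and $\epsilon$.

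Combining these two observations, Theorem \ref{thm450} gives (i)$\Leftrightarrow$(iii), while Proposition \ref{prop454} gives (i)$\Leftrightarrow$(ii), which completes the equivalence of all three conditions. As a byproduct one also recovers the familiar equivalence ``$\mu$ singular $\Leftrightarrow$ $\mu^{-1}$ singular'', since (ii) and (iii) amount to $\mu = \mu_{\sf S}$ and $\mu^{-1} = (\mu^{-1})_{\sf S}$, respectively.

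There is no real obstacle here: all the heavy lifting has already been done in establishing Theorem \ref{thm450} and Proposition \ref{prop454}. The only point requiring mild care is to ensure that the densities are taken to be non-negative versions (so that the integrand is $\ge 0$ pointwise $\lambda$-a.e.), which is standard for Radon--Nikodym derivatives of positive measures.
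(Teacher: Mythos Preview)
Your proposal is correct and follows exactly the approach the paper indicates: the text preceding Proposition~\ref{cor455} explicitly states that Theorem~\ref{thm450} and Proposition~\ref{prop454} together yield the result (and, as a byproduct, the familiar fact that $\mu$ is singular if and only if $\mu^{-1}$ is). Your added observation that $\int \Omega(\epsilon h)\,{\rm d}\lambda = 0$ for a non-negative density $h$ forces $h=0$ $\lambda$-a.e.\ is precisely the elementary step the paper leaves implicit.
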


\noindent
Just as in the case of best uniform approximations, Theorem
\ref{thm450} can be refined through further assumptions on $\mu$. For
instance, if $g = F_{\mu}^{-1}$ is $C^4$ on $]0,1[$, if both $g,g'\ne 0$ are
convex, and if $\Omega  (\epsilon g')$ has a $C^3$-extension to $\R$,
then mild boundedness assumptions on $g$ and its derivatives (ensuring
all relevant integrals are finite) guarantee that, as a refinement of (\ref{eq4501}),
\begin{equation}\label{eq4om2}
nd_{\epsilon} (\mu, \delta_{\bullet}^{\bullet, n}) = c_1 + \frac{c_1^2
c_2}{12} \, n^{-2} + o (n^{-2}) \quad \mbox{\rm as } n \to \infty \, ,
\end{equation}
where $c_1 = \int \Omega  (\epsilon g') \, {\rm d}\lambda$ and
$$
c_2 = \int  \frac{2 (1+\epsilon g') (g'')^2- (2 + \epsilon
  g')g' g''' }{ (1 + \epsilon g')^2(g')^2} \, {\rm d}\lambda \, .
$$
When compared to $\bigl( n d_{\epsilon} (\mu,
\delta_{\bullet}^{u_n})\bigr)$, therefore, not only does the sequence $\bigl( n d_{\epsilon} (\mu,
\delta_{\bullet}^{\bullet , n})\bigr)$ converge to a smaller value
(unless $g'$ is constant), but also it converges at the rate $(n^{-2})$ which often is faster than
the rate in (\ref{eq313}). For example, (\ref{eq4om2})
applies to exponential as well as Benford distributions, and the
reader may want to check that (\ref{eq2155}) and (\ref{eq218}) both
are consistent with it. If $\mu$ is a normal distribution with variance $\sigma^2 >
0$ then $\Omega (\epsilon g')$ does not have even a $C^1$-extension to $\R$, and
correspondingly $c_2 = -\infty$, which suggests that $\bigl( n d_{\epsilon} (\mu,
\delta_{\bullet}^{\bullet, n})\bigr)$ converges at a slower
rate. This indeed is the case: An elementary albeit lengthy analysis yields
$$
n d_{\epsilon} (\mu, \delta_{\bullet}^{\bullet , n}) =  - \epsilon
\sqrt{\frac{\pi \sigma^2}{2}} \, {\rm Li}_{1/2} \! \left( -
  \frac1{\epsilon \sqrt{2\pi \sigma^2}} \right)+
\cO \left( \frac{\log n }{n^2}\right) \quad \mbox{\rm as } n \to \infty
\, ,
$$
where ${\rm Li}_{1/2}$ denotes the polylogarithm of order $\frac12$; see, e.g., \cite[\S 25.12]{OLBC}.
Though slower than (\ref{eq4om2}), this rate of convergence again is considerably
faster than its counterpart (\ref{eq3115}) for best uniform approximations.
It should be noted, however, that such a hierarchy of rates, though observed
for many familiar distributions, is by no means universal: As
mentioned already in the Introduction, for the $1$-Pareto distribution
both sequences $\bigl( n d_{\epsilon} (\mu, \delta_{\bullet}^{u_n})\bigr)$
and $\bigl( n d_{\epsilon} (\mu, \delta_{\bullet}^{\bullet ,
  n})\bigr)$ converge to their respective limits $\frac12$ and $\frac{\pi}{8}$
at the same rate $(n^{-2})$, as is evident from (\ref{eq1om1}) and
(\ref{eq1om2}). For the $\frac12$-Pareto distribution,
i.e., $F_{\mu} (x) = 1 - x^{-1/2}$ for all $x\ge 1$,
(\ref{eq313}) yields
$$
n d_1 (\mu , \delta_{\bullet}^{u_n}) = 0.5000 - 0.03125 \, n^{-3} + o
(n^{-3}) \quad \mbox{\rm as } n \to \infty \, ,
$$
whereas (\ref{eq4om2}), with $c_1 =  \int_0^1 (2+t^3)^{-1} {\rm d}t =
0.4508$ and $c_2 = 6 \int_0^1 t(2- t^3)(2+t^3)^{-2} {\rm d}t = 0.9102$, reads
$$
n d_1 (\mu, \delta_{\bullet}^{\bullet , n}) = 0.4508  + 0.01541  \, n^{-2} +
o (n^{-2}) \quad \mbox{\rm as } n \to \infty \, .
$$
Here $\bigl( n d_1 (\mu, \delta_{\bullet}^{u_n})\bigr)$
converges at an even faster rate than $\bigl( n d_1 (\mu, \delta_{\bullet}^{\bullet , n})\bigr)$.

\begin{example}
For the Cantor distribution $\mu$ and its inverse $\mu^{-1}$ in Example \ref{ex320}, Theorem \ref{thm450} simply yields
$\lim\nolimits_{n\to \infty} n d_{\epsilon} (\mu,
\delta_{\bullet}^{\bullet, n}) =
\lim\nolimits_{n\to \infty} n d_{\epsilon} (\mu^{-1},
\delta_{\bullet}^{\bullet, n}) = 0$. An elementary analysis shows that both sequences $\bigl( n^{1/c} d_{\epsilon} (\mu,
\delta_{\bullet}^{\bullet, n} )\bigr)$ and $\bigl( n^{1/c} d_{\epsilon} (\mu^{-1},
\delta_{\bullet}^{\bullet, n} )\bigr)$ are divergent, yet bounded
above and below by positive constants, where $c = \log2/\log3 <
1$ is the Hausdorff dimension of both the set $C = G_{\mu}$ and the
measure $\mu$. It seems plausible
that Theorem \ref{thm450} can similarly be refined for a wide class of
self-similar (singular) distributions, thus complementing known $d_{\sf
  W}$-quantization results \cite{GL, GL05, K, Poe}.
\end{example}

To establish one other interesting property of best
$d_{\epsilon}$-approximations, recall from Proposition \ref{prop209}
that if $d_{\epsilon} (\mu, \delta_{x_n}^{p_n}) =
\ell_{\bullet}^{\bullet, n}$ then $p_n$ can easily be determined from
$x_n$ (or vice versa). Thus $x_n$ (or $p_n$) alone already captures
$\delta_{x_n}^{p_n}$ to a large extent, and it is natural to
ask, for instance, whether $x_{n,1}, \ldots , x_{n,n}$, i.e., the locations
of best $d_{\epsilon}$-approximations of $\mu \in \cP$ conform to an
{\em asymptotic point distribution\/} as $n\to \infty$, referred to as
the {\em point density measure\/} of $\mu$ in \cite{GL05}. In the context
of best $d_{\sf W}$-approximations (or -quantizations), and under the
appropriate assumptions, this question
has a positive answer; see, e.g., the ``empirical measure
theorem'' \cite[Thm.7.5]{GL} and variants thereof \cite{GL05}. As is
the case with Theorem \ref{thm450} and Proposition \ref{prop454}, the
result for best $d_{\epsilon}$-approximations again is simpler than its
$d_{\sf W}$-counterpart in that the asymptotic point distribution
is readily identified whenever $\mu \in \cP$ is non-singular, and no
further assumptions on $\mu$ are needed. In fact, it even is possible
to allow for slightly more general $x_n$. To concisely state the
result, for every $\mu \in \cP$ with $\mu_{\sf A}\ne 0$, define
$\mu_{\epsilon}^{\ast}\in \cP$ via
$$
\frac{{\rm d}\mu_{\epsilon}^{\ast}}{{\rm d}\lambda} =
\frac{\displaystyle  \Omega \! \left( \frac1{\epsilon
   } \cdot \frac{{\rm d} \mu_{\sf A}}{{\rm d}
     \lambda}\right)}{\displaystyle \int_{\R} \Omega \! \left( \frac1{\epsilon
   } \cdot \frac{{\rm d} \mu_{\sf A}}{{\rm d} \lambda}\right) {\rm
   d}\lambda } \quad \forall \epsilon > 0 \, . 
$$
Clearly, $\mu_{\epsilon}^{\ast}$ is absolutely continuous, and
$\mu_{\epsilon}^{\ast} = \mu$ for some (in fact, all) $\epsilon > 0$
if and only if $\mu$ is uniform, i.e., $\mu = \lambda (\, \cdot \,
\cap B)/\lambda (B)$ for some Borel set $B$ with $\lambda (B) \in
\R^+$. Also, given any $\mu \in \cP \setminus \cP_{\infty}^*$, i.e.,
$\# \, \mbox{\rm supp}\, \mu = \infty$, call a sequence $(x_n)$, with
$x_n \in \Xi_n$ for every $n\in \N$, {\em asymptotically\/} $d_{\epsilon}${\em
  -minimal for\/} $\mu$ if
$$
\lim\nolimits_{n\to \infty} \frac{d_{\epsilon} (\mu,
  \delta_{x_n}^{\bullet})}{d_{\epsilon} (\mu,
  \delta_{\bullet}^{\bullet, n})} = 1 \, .
$$
Thus, for instance, $(x_n)$ is asymptotically $d_{\epsilon}$-minimal for
$\mu \in \cP \setminus \cP_{\infty}^*$ whenever
$\delta_{x_n}^{p_n}$, with $x_n \in \Xi_n$, $p_n \in \Pi_n$, is a best $d_{\epsilon}$-approximation of $\mu$ for every $n\in \N$.

\begin{theorem}\label{thm470}
Let $\mu\in \cP$ and $\epsilon > 0$. If $\mu_{\sf A} \ne 0$ and
$(x_n)$ is asymptotically $d_{\epsilon}$-minimal for $\mu$, then
\begin{equation}\label{eq471}
\lim\nolimits_{n\to \infty} \frac{\# \{ 1\le j \le n : x_{n,j} \in
  I\}}{n} = \mu_{\epsilon}^{\ast} (I) \quad
\forall I \subset \R, \, I \: \mbox{\it an interval} \, .
\end{equation}
\end{theorem}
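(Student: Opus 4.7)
\textbf{Plan}: Using the dilation identity \eqref{eq206} first reduces the argument to $\epsilon = 1$. Write $f = {\rm d}\mu_{\sf A}/{\rm d}\lambda$ and $L = \int \Omega(f)\, {\rm d}\lambda$ (a positive number since $\mu_{\sf A} \ne 0$), and set $\ell_n = d_1(\mu, \delta_{x_n}^{\bullet})$ and $\nu_n = n^{-1}\sum_{j=1}^n \delta_{x_{n,j}}$. Asymptotic $d_1$-minimality of $(x_n)$ together with Proposition \ref{prop454} yields $n\ell_n \to L$. Applying Proposition \ref{prop209}(i) to any $p_n \in \Pi_n$ witnessing $d_1(\mu, \delta_{x_n}^{p_n}) = \ell_n$ gives the central local bound
$$
\mu \bigl(\,]x_{n,j} + \ell_n , x_{n,j+1} - \ell_n[\, \bigr) \le 2\ell_n \quad \forall j = 1, \ldots , n-1 \, ,
$$
together with the tail bounds $\mu\bigl(]-\infty, x_{n,1}-\ell_n[\bigr),\, \mu\bigl(]x_{n,n}+\ell_n, +\infty[\bigr) \le \ell_n$. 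Since $\mu_1^\ast$ is absolutely continuous, \eqref{eq471} is equivalent to $\nu_n \to \mu_1^\ast$ weakly.

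Next I would carry out a local analysis at every Lebesgue point $x_0$ of $f$ with $f(x_0) > 0$. Let $j_n^\ast \in \{1, \ldots, n-1\}$ be the unique index with $x_{n, j_n^\ast} \le x_0 < x_{n, j_n^\ast + 1}$, and set $L_n = x_{n, j_n^\ast + 1} - x_{n, j_n^\ast}$. First, $L_n \to 0$: otherwise $]x_{n, j_n^\ast} + \ell_n, x_{n, j_n^\ast + 1} - \ell_n[$ would eventually contain a fixed neighbourhood of $x_0$ of positive $\mu$-mass, contradicting the local bound as $\ell_n \to 0$. The Lebesgue differentiation theorem then gives $\mu(]x_{n,j_n^\ast}+\ell_n, x_{n,j_n^\ast+1}-\ell_n[) = f(x_0)(L_n - 2\ell_n)(1+o(1))$, so the local bound forces $L_n \le 2\ell_n\bigl(1 + 1/f(x_0)\bigr)(1+o(1))$. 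Combined with $n\ell_n \to L$, this yields the pointwise liminf estimate
$$
\liminf\nolimits_{n\to \infty} \frac{\nu_n \bigl(\, ]x_0 - \delta, x_0 + \delta[ \, \bigr)}{2\delta} \ge \frac{\Omega(f(x_0))}{L} \quad \forall\, \delta > 0 \, .
$$

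The main obstacle is to convert this pointwise lower bound on densities into an aggregate lower bound on intervals. I would achieve this by a Vitali-type covering argument, extracting the bound $\liminf_{n\to \infty} \nu_n(K) \ge \int_K \Omega(f)/L \, {\rm d}\lambda = \mu_1^\ast(K)$ for every compact $K \subset \R$, and then exhausting an arbitrary interval $I$ by such compact subsets to get $\liminf_{n\to \infty} \nu_n(I) \ge \mu_1^\ast(I)$. The reverse inequality $\limsup_{n\to \infty} \nu_n(I) \le \mu_1^\ast(I)$ then follows by applying the same bound to $\R \setminus I$ (a union of at most two intervals) and using $\nu_n(\R) = \mu_1^\ast(\R) = 1$. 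Hence $\lim_{n\to \infty} \nu_n(I) = \mu_1^\ast(I)$ for every interval $I$, which is exactly \eqref{eq471}. Notably, tightness of $(\nu_n)$ need not be established separately, as it is a direct byproduct of the Vitali-type bound once $K$ is chosen so that $\mu_1^\ast(K)$ is close to $1$; and the possible singular part of $\mu$ is absorbed automatically, since the complementary inequality forces any weak subsequential limit of $(\nu_n)$ to coincide with the absolutely continuous measure $\mu_1^\ast$.
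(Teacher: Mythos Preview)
Your overall strategy---reduce to $\epsilon=1$, extract from Proposition \ref{prop209} the gap bound $\mu(]x_{n,j}+\ell_n, x_{n,j+1}-\ell_n[)\le 2\ell_n$, turn this into a lower bound on the empirical density, and close via the total-mass constraint---matches the paper's. The execution, however, has a real gap.

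The displayed liminf estimate is false as written: for $\mu$ uniform on $[0,1]$ and $x_0\in(0,1)$ one has $\Omega(f(x_0))/L=1$, whereas $\nu_n(]x_0-\delta,x_0+\delta[)\le 1$ forces the left-hand side to be at most $1/(2\delta)$, so the inequality fails for every $\delta>\tfrac12$. More seriously, even restricted to small $\delta$ the estimate does not follow from your argument. You control only the single gap containing $x_0$, whereas counting points in a $\delta$-ball requires controlling \emph{all} gaps intersecting it; the $o(1)$ in $L_n\le 2\ell_n\bigl(1+1/f(x_0)\bigr)(1+o(1))$ depends on $x_0$ through the Lebesgue-point convergence and is not uniform over nearby points. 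The Vitali step therefore lacks a valid input.

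The remedy is to skip the density/Vitali detour and use Fatou's lemma directly---essentially what the paper does. With $\psi_n(x)$ denoting the length of the gap containing $x$, your single-gap analysis (after a minor case split, since $]x_{n,j_n^\ast}+\ell_n,x_{n,j_n^\ast+1}-\ell_n[$ need not shrink \emph{nicely} to $x_0$) yields $\liminf_n (n\psi_n(x))^{-1}\ge \Omega\bigl(f(x)\bigr)/L$ for $\lambda$-a.e.\ $x$. Combining this with the elementary count $n\nu_n(I)\ge \int_I \psi_n^{-1}\,{\rm d}\lambda -1$ and Fatou gives $\liminf_n \nu_n(I)\ge \mu_1^{\ast}(I)$ at once; then complement as you propose. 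The paper implements the same scheme via a continuous $L^1$-approximation $g$ of $f$ and a piecewise constant function $h_n$ (playing the role of $\ell_n/\psi_n$), so that the pointwise limit comes from continuity of $g$ rather than Lebesgue differentiation; this also cleanly absorbs the nice-shrinking issue and the distinction between $\mu$ and $\mu_{\sf A}$ that your differentiation step elides.
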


\begin{proof}
For convenience, write $f = F_{\mu} = f_{\sf A} + f_{\sf S}$ with
$f_{\sf A}, f_{\sf S}\in \cF$ such that $\lambda_{f_{\sf A}} =
\mu_{\sf A}$ and $\lambda_{f_{\sf S}} = \mu_{\sf S}$. (The functions
$f_{\sf A}, f_{\sf S}$ can be made unique, for instance,  by requiring
that $f_{\sf A} (-\infty) = f_{\sf S} (-\infty) = 0$.) Also, let $G=G_{\mu}$,
$\ell_n = d_{\epsilon} (\mu, \delta_{x_n}^{\bullet})$ for all
$n\in \N$, and define $G^{\dagger}$ as in (\ref{eq4503Z}). Once again
it suffices to consider the case of $\epsilon = 1$. Note that
$\mu_{\sf A} \ne 0$ implies $\ell_n > 0$ for every $n$, and
$\lim_{n\to \infty} n \ell_n = \int_{\R} \Omega ({\rm d} \mu_{\sf A}/
{\rm d} \lambda) >0$, by Proposition \ref{prop454} and the assumed asymptotic
$d_{\epsilon}$-minimality of $(x_n)$.

Fix a non-empty interval $I = \: ]y,z]$ with $y,z\in \R$. Perturbing
$x_n$ slightly if necessary, without altering $\# \{ 1\le j \le n :
x_{n,j} \in I\}$ or increasing $d_1 (\mu, \delta_{x_n}^{\bullet})$ for any $n$, it
may be assumed that $x_{n,j}< x_{n,j+1}$ for all $n\in \N$ and $j=0,
\ldots , n$. Thus for every $x\in \R$ there exists a unique $j_n(x)
\in \{0, \ldots , n\}$ with $x_{n, j_n(x)} \le x < x_{n, j_n(x) +
  1}$. By Proposition \ref{prop209},
$$
f_{\sf A} (x_{n,j+1} - \ell_n) + f_{{\sf S}-} (x_{n,j+1} - \ell_n) -
\ell_n \le f_{\sf A} (x_{n,j} + \ell_n) + f_{\sf S} (x_{n,j} + \ell_n)
+\ell_n \quad \forall j = 0, \ldots , n \, ,
$$
and consequently also
\begin{equation}\label{eq472}
f_{\sf A} (x_{n,j+1} - \ell_n) - f_{\sf A} (x_{n,j} + \ell_n) \le 2
\ell_n \quad \forall j =0, \ldots , n \, . 
\end{equation}
Fix any $a>0$, and recalling that $f_{\sf A}$ is differentiable
$\lambda$-almost everywhere with $f_{\sf A}'\ge 0$ and $0< \int_{\R}
f_{\sf A}' \, {\rm d}\lambda = \mu_{\sf A} (\R) \le 1$, pick a
continuous function $g: \R \to \R^+$ with $\int_{\R} |f_{\sf A}' -
g|\, {\rm d}\lambda < a$.

Let $K_n = \{ 0 \le j \le n : x_{n,j} + \ell_n < x_{n,j+1} - \ell_n\}$
which may not be all of the set $\{0, \ldots , n \}$ but does contain
$0,n$ in any case. On the one hand, if $j \in K_n \setminus \{0,n\}$
let $J_{n,j} = [x_{n,j} + \ell_n , x_{n,j+1} - \ell_n]$ and
$\lambda_{n,j} = \lambda (J_{n,j}) = x_{n,j+1} - x_{n,j} - 2\ell_n >
0$, and deduce from (\ref{eq472}) that
$$
\frac1{\lambda_{n,j}} \int_{J_{n,j}} g \, {\rm d}\lambda \le
\frac{2\ell_n - \displaystyle \int_{J_{n,j}} (f_{\sf A}' - g) \, {\rm
    d}\lambda}{ x_{n,j+1} -x_{n,j} - 2\ell_n} \, ,
$$
and consequently
\begin{equation}\label{eq473}
\ell_n \ge \Omega \left(  \frac1{\lambda_{n,j}} \int_{J_{n,j}} g \,
  {\rm d}\lambda  \right) (x_{n,j+1} - x_{n,j}) - \frac12
\int_{J_{n,j}} |f_{\sf A}' - g| \, {\rm d}\lambda \, ;
\end{equation}
with the usual convention $0 \cdot (\pm \infty) = 0$, (\ref{eq473}) is
correct also for $j=0,n$. On the other hand, if $j\not \in K_n$ then
clearly $\ell_n \ge \frac12 (x_{n,j+1} - x_{n,j})$. With
(\ref{eq473}) as well as the definitions of $j_n(x)$ and $K_n$,
therefore,
\begin{align}\label{eq474}
\bigl( 1 + j_n (z) - j_n (y)\bigr) \ell_n & \ge  \sum\nolimits_{j\in \{
  j_n(y), \ldots , j_n(z)\} \cap K_n} \Omega \left(  \frac1{\lambda_{n,j}} \int_{J_{n,j}} g \,
  {\rm d}\lambda  \right) (x_{n,j+1} - x_{n,j})  \nonumber \\
& \qquad - \frac12  \sum\nolimits_{j\in \{
  j_n(y), \ldots , j_n(z)\} \cap K_n}  \int_{J_{n,j}} |f_{\sf A}' - g|
\, {\rm d}\lambda \nonumber \\
& \qquad + \sum\nolimits_{j\in \{
  j_n(y), \ldots , j_n(z)\} \setminus K_n} \frac12 (x_{n,j+1} -
x_{n,j}) \nonumber \\[2mm]
& \ge \int_{[x_{n,j_n(y)}, x_{n,j_n(z) +1}]} h_n \, {\rm d}\lambda - \frac12
\int_{[x_{n,j_n(y)}, x_{n,j_n(z) +1}]} |f_{\sf A}' - g| \, {\rm
  d}\lambda \nonumber \\
& \ge \int_I h_n \, {\rm d}\lambda -\frac{a}{2} \, ,
\end{align}
where the piecewise constant function $h_n : \R \to \R^+$ is given by
$$
h_n (x) = \left\{
\begin{array}{ll}
\displaystyle \Omega \! \left(  \frac1{\lambda_{n,j_n(x)}} \int_{J_{n,j_n(x)}} g \,
  {\rm d}\lambda  \right) & \mbox{\rm if } j_n(x) \in K_n\, , \\[2mm]
\displaystyle \frac12 & \mbox{\rm if } j_n(x) \not \in K_n \, .
\end{array}
\right.
$$
If $j_n(x)\not \in K_n$ for all sufficiently large $n$ then clearly
$\lim_{n\to \infty} h_n(x) = \frac12$, whereas if $x\in G \setminus
G^{\dagger}$ and $j_n (x) \in K_n$ for infinitely many $n$ then
$\liminf_{n\to \infty} h_n(x) \ge \Omega \bigl( g(x)\bigr)$ because,
similarly to (\ref{eq4503A}),
$$
\lim\nolimits_{n\to \infty} [x_{n, j_n(x)}, x_{n,j_n(x) +1}] = \{x\}
\quad \forall x \in G \setminus G^{\dagger} \, .
$$
In summary, therefore,
\begin{equation}\label{eq475}
\liminf\nolimits_{n\to \infty} h_n(x) \ge \Omega \bigl( g(x) {\bf
  1}_G(x)\bigr) \quad \mbox{\rm for $\lambda$-almost every } x\in \R
\, .
\end{equation}
Note that $j_n(z) - j_n(y) = \# \{ 1\le j \le n : x_{n,j} \in
I\}$. Consequently, (\ref{eq474}), Proposition \ref{prop454} with
$\int_{\R} \Omega (f_{\sf A}') \, {\rm d}\lambda > 0$, and Fatou's
lemma applied to (\ref{eq475}), together yield
$$
\liminf\nolimits_{n\to \infty} \frac{\# \{ 1\le j \le n : x_{n,j} \in
  I\}}{n} \ge \frac{\displaystyle \int_I \Omega (g {\bf 1}_G) \, {\rm
    d}\lambda - \frac{a}{2}}{\displaystyle \int_{\R} \Omega (f_{\sf A}') \, {\rm
    d}\lambda } \, .
$$
Recall that $f_{\sf A}' = 0$ on $\R \setminus G$, hence
\begin{align*}
\int_I \Omega (g {\bf 1}_G) \, {\rm d}\lambda & = \int_I \Omega (f_{\sf
  A} ' ) \, {\rm d}\lambda + \int_{I \cap G} \bigl( \Omega (g {\bf
  1}_G) - \Omega (f_{\sf A}') \bigr)  \, {\rm d}\lambda \\
& \ge \int_I \Omega (f_{\sf A} ' ) \, {\rm d}\lambda  - \frac12 \int_I
|f_{\sf A}' - g | \, {\rm d}\lambda  \ge \int_I \Omega (f_{\sf A} ' )
\, {\rm d}\lambda  - \frac{a}{2} \, ,
\end{align*}
and consequently
$$
\liminf\nolimits_{n\to \infty} \frac{\# \{ 1\le j \le n : x_{n,j} \in
  I\}}{n} \ge \mu_{1}^{\ast} (I) - \frac{a}{ \displaystyle \int_{\R} \Omega
  (f_{\sf A} ' ) \, {\rm d}\lambda   } \, .
$$
Since the number $a>0$ as well as the interval $I\subset \R$ have been arbitrary,
and since $\mu_{1}^{\ast} (\R) = 1$,
$$
\lim\nolimits_{n\to \infty} \frac{\# \{ 1\le j \le n : x_{n,j} \in
  I\}}{n} =  \mu_1^{\ast} (I) \quad \forall I \subset \R, I \:
\mbox{\rm an interval}\, ,
$$
i.e., (\ref{eq471}) holds as claimed.
\end{proof}

Note that Theorem \ref{thm470} in particular asserts that if $\mu\in \cP$ is
non-singular and $(\delta_{x_n}^{p_n})$, with $x_n \in \Xi_n$ and $p_n
\in \Pi_n$ for every $n\in \N$, is {\em any\/} sequence of
best $d_{\epsilon}$-approximations of $\mu$, then the sequence
$(\delta_{x_n}^{u_n})$, obtained by ``forgetting'' the optimal
weights and instead assigning equal weight $1/n$ to each atom, converges
weakly to $\mu_{\epsilon}^{\ast}$. It seems rather remarkable that $(\delta_{x_n}^{u_n})$ always
converges, and to a limit that is independent of $(x_n)$. By contrast,
simple examples show that $(\delta_{x_n}^{u_n})$ may diverge if $\mu$
is singular; cf.\ \cite{GL05}.

\begin{example}\label{ex48om}
Let $\mu= {\sf exp}(a)$ with $a>0$. With $\ell_{\bullet}^{\bullet, n}$
and the (unique) best $d_{\epsilon}$-approximation
$\delta_{x_n}^{p_n}$  of $\mu$ found in Example \ref{ex212}, it is
readily confirmed that for any $n\in \N$ and $x\in \R^+$ the number
$\# \{ 1 \le j \le n: x_{n,j} \le x\}$ equals the largest integer not larger than
$$
n - \frac{\epsilon}{2 a \ell_{\bullet}^{\bullet, n}} \log \left( 
1 +  \left(\frac{e^{- a (x +  \ell_{\bullet}^{\bullet, n}/\epsilon)}
  }{ \ell_{\bullet}^{\bullet, n}} - 1 \right) \tanh \frac{a
  \ell_{\bullet}^{\bullet, n} }{\epsilon} \right) \, .
$$
From this, a straightforward calculation utilizing (\ref{eq2155})
yields
$$
\lim\nolimits_{n\to \infty} \frac{\# \{ 1\le j \le n : x_{n,j} \le
  x\}}{n} = 1 - \frac{\log (1 + a e^{-ax}/\epsilon)}{\log (1+
  a/\epsilon)} \quad \forall x \in \R^+ \, .
$$
Thus the asymptotic point density of $(x_n)$ is
$$
\frac{a^2}{\log (1+a/\epsilon)} \cdot \frac1{a + \epsilon e^{ax}} =
\frac{\Omega (ae^{-ax}/\epsilon)}{\int_{\R} \Omega (a
  e^{-ay}/\epsilon) \, {\rm d}y} = \frac{{\rm
    d}\mu_{\epsilon}^{\ast}}{{\rm d}\lambda} (x)  \quad \forall x \in \R^+ \, ,
$$
in perfect agreement with Theorem \ref{thm470}. Note that unlike for
best $d_{\sf W}$-approximations \cite[Thm.7.5]{GL}, this asymptotic
point distribution is not exponential; see also Figure \ref{fig3}.

For another simple example, let $\mu$ be a normal distribution with
mean $0$ and variance $\sigma^2 > 0$. While no explicit formula is
available for the (unique) best $d_{\epsilon}$-approximation
$\delta_{x_n}^{p_n}$ of $\mu$ in general, Theorem \ref{thm470} yields
$$
\frac{{\rm d}\mu_{\epsilon}^{\ast}}{{\rm d}\lambda} (x) =
- \frac1{\displaystyle {\rm Li}_{1/2} \!\left( - \frac1{\epsilon \sqrt{2\pi
      \sigma^2}} \right)} \cdot \frac1{
\sqrt{2\pi \sigma^2}  + 2\pi \epsilon \sigma^2 
e^{x^2/(2\sigma^2) } } \quad \forall x \in \R \, ,
$$
as the asymptotic point density of $(x_n)$. Again, this asymptotic
point distribution is not normal, unlike its $d_{\sf W}$-counterpart.
\end{example}

\begin{figure}[ht]
\psfrag{muexp}[l]{\small $\mu = {\sf exp}(1)$}
\psfrag{munorm}[l]{\small $\mu =N(0,1)$}
\psfrag{tf}[l]{\small $\displaystyle \frac{{\rm d\mu}}{{\rm d}\lambda} = e^{-x}$}
\psfrag{tfast}[l]{$\displaystyle \frac{{\rm d\mu_1^{\ast}}}{{\rm
      d}\lambda} =\displaystyle
  \frac1{ (1+e^x)\log 2}$}
\psfrag{tg}[r]{\small $\displaystyle \frac{{\rm d\mu}}{{\rm d}\lambda} =\frac{e^{-x^2/2}}{\sqrt{2\pi}}$}
\psfrag{tgast}[l]{\small $\displaystyle \frac{{\rm d\mu_1^{\ast}}}{{\rm
      d}\lambda} =\displaystyle - \frac{1/
    {\rm Li}_{1/2} (-\frac1{\sqrt{2\pi}}) }{\sqrt{2\pi}+ 2\pi e^{x^2/2}}$}
\psfrag{th0}[]{\small $0$}
\psfrag{th1}[]{\small $1$}
\psfrag{th2}[]{\small $2$}
\psfrag{thm2}[]{\small $-2$}
\psfrag{th3}[]{\small $3$}
\psfrag{tv1}[]{\small $1$}
\psfrag{tv02}[]{\small $0.2$}
%
%
\begin{center}
\includegraphics{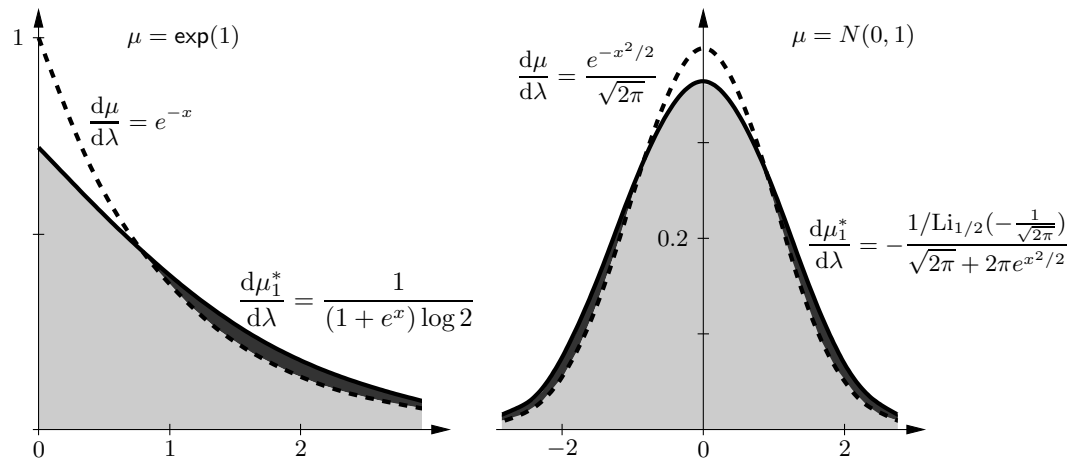}
\end{center}
\caption{Comparing the standard exponential (left) and standard normal
densities (broken curves) to the asymptotic point densities of their
respective best $d_1$-approximations (solid curves); see Example \ref{ex48om}.}\label{fig3}
\end{figure}

\subsubsection*{Acknowledgements}
The first author was partially supported by an {\sc Nserc} Discovery
Grant. He is grateful to T.\ Hill for helpful comments, and to R.\
Zweim\"uller for pointing out the classical reference \cite{DE}.

\end{document}